\DeclarePairedDelimiter{\ceil}{\lceil}{\rceil}
\newcommand\rurl[1]{%
  \href{https://#1}{\nolinkurl{#1}}%
}
\newtheorem{definition}{Definition}[section]
\newtheorem{theorem}[definition]{Theorem}
\newtheorem{lemma}[definition]{Lemma}
\newtheorem{corollary}[definition]{Corollary}
\newtheorem{proposition}[definition]{Proposition}
\newtheorem{example}{Example}[section]
\newtheorem{remark}[definition]{Remark}
\newtheorem*{theorem*}{Theorem}
\newtheorem*{lemma*}{Lemma}
\def\N{{\mathbb N}}
\def\Z{{\mathbb Z}}
\def\R{{\mathbb R}}
\def\C{{\mathbb C}}
\renewcommand{\Re}{\operatorname{Re}}
\renewcommand{\Im}{\operatorname{Im}}
\newcommand{\E}{{\mathcal{E}}}
\newcommand{\Tcal}{{\mathcal{T}}}
\newcommand{\G}{\mathcal{G}}
\newcommand{\spect}{\mathcal{S}}
\newcommand{\B}{\mathcal{B}}
\newcommand{\lspan}{{\mathrm{span} \,}}
\newcommand{\supp}{{\mathrm{supp}}}
\DeclareMathOperator{\diag}{diag}
\DeclareMathOperator{\trace}{tr}
\DeclareMathOperator{\argmin}{argmin}
\DeclareMathOperator{\dist}{dist}
\DeclareMathOperator{\sinc}{sinc}
\newcommand{\V}{\mathcal{V}}
\newcommand{\re}{{\mathrm{Re} \,}}
\newcommand{\im}{{\mathrm{Im} \,}}
\newcommand{\eps}{\varepsilon}
\newcommand{\ft}{{\mathcal{F}}}
\newcommand\thankssymb[1]{\textsuperscript{\@fnsymbol{#1}}}
\def\@makefnmark{%
  \leavevmode
  \raise.9ex\hbox{\fontsize\sf@size\z@\normalfont\tiny\@thefnmark}}
\def\bign#1{\mathclose{\hbox{$\left#1\vbox to8.5\p@{}\right.\n@space$}}\mathopen{}}
\title{Gabor phase retrieval via semidefinite programming}
\author{Philippe Jaming}
\email{philippe.jaming@math.u-bordeaux.fr}
\address{Univ. Bordeaux, CNRS, Bordeaux INP, IMB, UMR 5251, F-33400 Talence, France}
\author{Martin Rathmair}
\email{martin.rathmair@univie.ac.at}
\address{Faculty of Mathematics, University of Vienna, Oskar-Morgenstern-Platz
1, 1090 Vienna, Austria}
\date{October 2023}
\begin{document}

\maketitle
\begin{abstract}
We consider the problem of reconstructing a function $f\in L^2(\R)$ given phase-less samples of its Gabor transform, which is defined by 
$$
\G f(x,\omega) \coloneqq 2^{\frac14} \int_\R f(t) e^{-\pi (t-x)^2} e^{-2\pi i y t}\,\mbox{d}t,\quad (x,y)\in\R^2.
$$
More precisely, given sampling positions $\Omega\subseteq \R^2$ the task is to reconstruct $f$ (up to global phase) from measurements $\{|\G f(\omega)|: \,\omega\in\Omega\}$. 
This non-linear inverse problem is known to suffer from severe ill-posedness. As for any other phase retrieval problem, constructive recovery is a notoriously delicate affair due to the lack of convexity. One of the fundamental insights in this line of research 
is that the connectivity of the measurements is both necessary and sufficient for reconstruction of phase information to be theoretically possible.\\
In this article we propose a reconstruction algorithm which is based on solving two convex problems and, as such, amenable to numerical analysis. We show, empirically as well as analytically, that the scheme accurately reconstructs from noisy data within the connected regime.
Moreover, to emphasize the practicability of the algorithm we argue that both convex problems can actually be reformulated as semi-definite programs for which efficient solvers are readily available.\\
The approach is based on ideas from complex analysis, Gabor frame theory as well as matrix completion.

\vspace{1.5em}
\noindent \textbf{Keywords.} phase retrieval, phase-less sampling, semi-definite programming, matrix completion

\noindent \textbf{AMS subject classifications.} 65R10, 42A38, 30H20,  46N10, 15A83
\end{abstract}

\section{Introduction}
Phase retrieval problems are an important and common class of problems in the physical sciences 
with applications ranging from diffraction imaging \cite{shechtman:imaging} over audio \cite{griffinlim} to quantum mechanics \cite{Pauli1933}. 
In mathematical terms, phase retrieval is concerned with the reconstruction from phaseless linear measurements.
More formally, given $T:X\to X'$  a linear operator between two complex function spaces $X$ and $X'$, consider the map 
$$
\mathcal{A}:f\mapsto |Tf|^2 ,\quad f\in X
$$
where $|Tf|$ denotes the pointwise modulus of $Tf$, i.e. $|Tf|(\omega)=|Tf(\omega)|$. 
Solving the phase retrieval problem amounts to solving the inverse problem associated with the forward operator $\mathcal{A}$.
Clearly, $\mathcal{A}f=\mathcal{A}h$ if $h =cf$ with $|c|=1$.
To remove this obvious source of ambiguity one does not distinguish between functions which coincide up to a multiplicative factor of unit modulus,
$$
f \sim h \quad \Longleftrightarrow \quad \exists c\in\C,\,|c|=1:\quad h=cf.
$$
In a nutshell, the reconstruction task is to come up with the left inverse $\mathfrak{L}$ of $\mathcal{A}$, that is, a map satisfying 
$$
\forall f\in X:\quad  \mathfrak{L}(|Tf|^2) \sim f.
$$
It is important to note that it is by no means clear that the left inverse exists. 
For many instances of phase retrieval problems it is already difficult to decide the question of uniqueness, i.e., whether $\mathcal{A}$ is injective on $X\slash\sim$ or not.
Typically, there are three fundamental aspects studied within the scope of phase retrieval: uniqueness, stability and actual reconstruction.
In terms of the left inverse, these categories correspond to existence of $\mathfrak{L}$, continuity of $\mathfrak{L}$ and construction of $\mathfrak{L}$, respectively.

Ideally, given a concrete phase retrieval problem one would like to have a good understanding of all three aspects. Uniqueness and stability address the issue of well-posedness. Algorithmic methods to reconstruct are of fundamental importance for practical applications. In addition -- as for any other numerical method -- performance guarantees and a-priori estimates for the error are highly desirable, as is the capability of the method to deal with noisy input data.\\

Generally speaking, the algorithmic solution of phase retrieval problems is a notoriously delicate matter. 
Different variations of algorithms based on alternating projections have been around for decades,
such as the methods by Gerchberg-Saxton \cite{Gerchberg1972APA} and by Fienup \cite{Fienup:82} to name just two. 
These schemes have the advantage of being uncomplicated to implement. However, the absence of convexity makes them difficult to analyse, which is a major drawback when it comes to establishing convergence guarantees.
A rather recent trend is based on the idea of lifting the problem into a higher dimensional space and solving a convex reformulation (or relaxation) there.
For a number of algorithms of this type, results have been established which guarantee the accuracy of the method in a sufficiently randomized setting.
We mention the 'PhaseLift' algorithm (Cand\`es, Strohmer and Voroninski \cite{phaselift}) and 
the 'PhaseCut' algorithm (Waldspurger, d'Aspremont and Mallat \cite{phasecut}) as two of the earliest contributions into this direction.
For a comprehensive discussion of numerical aspects of phase retrieval as well as a detailed overview of the literature we refer to the recent survey \cite{survey:numericspr}.

\subsection{Aim of this paper \& related results}
In this paper we consider a particular instance of a phase retrieval problem, namely where the linear operator is the short-time Fourier transform with Gaussian window.
Precisely, this means that $T=\G$, the Gabor transform defined by
$$
\G f(x,y) = 2^{1/4} \int_\R f(t) e^{-\pi(t-x)^2} e^{-2\pi i y t}\,\mbox{d}t,\quad f\in L^2(\R),\,(x,y)\in\Omega,
$$
with $\Omega\subseteq \R^2$.
Evaluations of the Gabor transform correspond to correlation between $f$ and time-frequency shifts of a Gaussian
$$
\G f(x,y) = \langle f, \pi(\lambda) \varphi\rangle_{L^2(\R)},\quad \lambda=(x,y)
$$
where $\varphi\coloneqq 2^{1/4} e^{-\pi\cdot^2}$ and $\pi(\lambda)$ denotes a time-frequency shift, i.e.,
$$
(\pi(\lambda)g)(t) = e^{2\pi i yt} g(t-x).
$$
The squared modulus of the Gabor transform is called the (Gabor) spectrogram and denoted by $\spect f=|\G f|^2$.
The set $\Omega$ represents the locations where phaseless information is given.
If $\Omega = \R^2$ (in fact, $\Omega$ open suffices) it is well known that any $f\in L^2(\R)\slash\sim$ is uniquely determined by $\{\spect f(\omega), \omega\in\Omega\}$. 
Contrary to that, the uniqueness question for discrete sets $\Omega$ is already quite subtle. While sampling on a lattice (irrespective of its density!) is not enough \cite{alaifari:counterexamples, liehr:discbarriers}, the union of three suitable perturbations of a sufficiently dense lattice yields uniqueness \cite{grohs2023phase}.\\
Phase retrieval is inherently unstable in infinite dimensions \cite{cahill16,alaifari:banach} which aggravates matters even more. 
In the above setting this means that the left inverse of $f\mapsto \spect f$ (presuming that it exists) cannot be a well-behaved smooth function. 
Sources of instabilities are functions $f=f_1+f_2$ consisting of two (or more) components: we say that $f_1$ and $f_2$ are components if their respective Gabor transforms are essentially supported on disjoint domains (in particular, $\G f_1 \overline{\G f_2}\approx 0$). In this case we can flip the sign in front of either component and get similar phaseless observations:
\begin{align*}
|\G (f_1-f_2)|^2 &= |\G f_1|^2-2\Re(\G f_1 \overline{\G f_2}) + |\G f_2|^2 \\
&\approx |\G f_1|^2+2\Re(\G f_1 \overline{\G f_2}) + |\G f_2|^2  = |\G f|^2.
\end{align*}
On a positive note, functions of aforementioned type are the only source of instability; that is, if the spectrogram is connected, then phase information is stably determined by the phase-less observation \cite{grohs:stablegaborsc, grohs:stablegabormulti}.\\

In terms of reconstruction from phaseless Gabor measurements, to the best of our knowledge there is only one result into this direction at this moment in time. For signals residing in the shift-invariant space generated by a Gaussian, Grohs and Liehr \cite{grohs2022stable} establish an explicit inversion formula and prove that its discretization leads 
to a stable reconstruction method under suitable connectedness conditions.\\
 Escudero et al. \cite{escudero2022efficient} consider the problem of locating the positions of the zeros of the Bargmann transform, 
 which is closely related to our problem as Gabor and Bargmann transform coincide up to normalization (and due to the fact that the Bargmann transform is an entire function, and thus essentially determined by  its roots).\\
Finite-dimensional variants of the STFT phase retrieval problem have also been investigated extensively \cite{eldar15, Jaganathan16, prusa17, bendory18, Alaifari2021}. \\

As outlined above absence of uniqueness and uniform stability is a real issue for the problem at hand. Furthermore, any practical implementation is restricted to process a finite amount of data (in particular, $\Omega$ finite) while the problem itself is infinite dimensional.
Thus, it is inevitable to compromise on the original objective of actually constructing the left inverse $\mathfrak{L}$ of 
$$\mathcal{A}:f\mapsto   \spect f = |\G f|^2.$$
It is therefore the aim of this article to identify a map $\tilde{\mathfrak{L}}:\R^\Omega\to L^2(\R)$
which serves as an approximate left inverse of $\mathcal{A}$.

\subsection{Contribution}

In this paper we propose an algorithmic solution to compute an approximate left inverse $\tilde{\mathfrak{L}}$.
More specifically, the algorithm takes spectrogram samples as input, i.e., 
$$
\spect f(\omega) = |\G f(\omega)|^2,\quad \omega\in\Omega
$$
with $\Omega\subseteq \R^2$ finite, and aims to reconstruct $f$ up to a multiplicative constant of unit modulus.
Our method is based on solving two convex programs (CP) both of which can actually be formulated as semidefinite programs (SDP). 
For the reader who is not so familiar with convex optimization, the prototype of a SDP has the form
\begin{equation}
\begin{aligned}
\min  \quad & \langle X,A_0\rangle_F\\
\textrm{s.t.} \quad & \langle X,A_k\rangle_F = b_k,\quad k=1,\ldots,M\\
& X\succeq 0
\end{aligned}
\end{equation}
with $(A_k)_{k=0}^M$ a family of Hermitian matrices, $\langle A,B\rangle_F= \trace(B^H A)$ is the Frobenius inner product and
$(b_k)_{k=1}^M\subseteq \R$. 
We refer to the survey article by Vandenberghe and Boyd \cite{vandenberghe:sdp} on semidefinite programming and highlight the following snippet from the abstract of the aforementioned article to partly explain the popularity of SDP.
\begin{quote}
    "{\em Although semidefinite programs are much more general than linear programs, they are not much harder to solve.}"
\end{quote}

Further, the case where the constraints are inequalities $\langle X,A_k\rangle_F \geq b_k$ also fits in this framework.
We point out that it often makes sense to soften the constraints by replacing 
$\langle X,A_k\rangle_F = b_k$ by $|\langle X,A_k\rangle_F-b_k|\le \varepsilon$ with $\varepsilon$ a positive tolerance parameter, for example when the system is over-determined or in the presence of noise. 
A SDP of the above type can be easily set up and solved using the CVXPY package in Python.

\smallskip
The reconstruction scheme is accompanied by numerical analysis. 
Our results guarantee that the proposed method is accurate 
 within the stable regime (that is, if the spectrogram exhibits sufficient connectivity) and in the presence of noise, presuming that sufficient data is provided (i.e., $\Omega$ is rich enough).
Thus, our algorithm joins the rank of provably convergent
semi-definite programming-based methods for phase retrieval problems such as the famous 'PhaseLift' and 'PhaseCut' algorithms. We point out that -- in contrast to the aforementioned contributions -- our results hold in a purely deterministic context.

\subsection{Main ideas} 
Before providing some explanation of our approach we introduce the number 
$$
\mathfrak{a}\coloneqq \frac1{\sqrt2} = 0.7071\ldots.
$$
The square lattice with mesh size $\mathfrak{a}$ will be denoted by $\Lambda=\mathfrak{a}\Z^2$. It is important to keep in mind that in practice $\Lambda$ needs to be finite so we deal with finite dimensional objects. In this introductory part this aspect will however be neglected for the sake of simplicity of the argument as will be the potential influence of noise.
\subsubsection*{Reduction to discrete phase recovery problem}
Given $f\in L^2(\R)$ we introduce a sequence $v=(v_\lambda)_{\lambda\in\Lambda} \in \C^\Lambda$ by virtue of 
$$
v_\lambda \coloneqq \G f(\lambda), \quad \lambda\in\Lambda.
$$
Even though the index set $\Lambda\subseteq\R^2$ has a two-dimensional appearance we think of it as a one-dimensional vector.
As it is well-known from Gabor frame theory, the infinite vector $v$ carries all the relevant information of the function $f$ to be reconstructed \cite{daubechies:framesinbargmann, lyubarskii:framesinbargmann, seip:density1, seip:density2}. More explicitly, the relationship between $v$ and $f$ can be described in terms of an expansion formula involiving the so-called dual window $\psi\in L^2(\R)$:
\begin{equation}\label{eq:dualwindowrec}
f = \sum_{\lambda=(a,b)\in\Lambda} v_\lambda \pi(\lambda)\psi
\end{equation}
Thus, it suffices to recover the vector $v$ up to a global phase factor.
\subsubsection*{Lifting} 
The term 'lifting' refers to the idea of embedding the vector space $\C^\Lambda$ into the matrix space $\C^{\Lambda\times\Lambda}$. 
We consider the infinite matrix $v\otimes \overline{v}\in \C^{\Lambda\times \Lambda}$, which is defined by
$$
(v\otimes\overline{v})_{\lambda,\lambda'} \coloneqq v_\lambda\overline{v_{\lambda'}} = \G f(\lambda)\overline{\G f(\lambda')},\quad \lambda,\lambda'\in\Lambda.
$$
as an alternative representation of the vector $v$. 
The matrix entry $(v\otimes \overline{v})_{\lambda,\lambda'}$ contains the information of relative phase change between the locations $\lambda$ and $\lambda'$.
Note that $v\otimes\overline{v}$ determines $v$ uniquely up to a multiplicative constant of unit modulus -- $\lspan \{v\}$ is the solitary nontrivial eigenspace of the matrix $v\otimes\overline{v}$. Thus, this approach neatly fits into the scope of phase retrieval.
The lifting trick is well-known within the phase retrieval community and has proven to be quite useful, for instance to turn a given phase retrieval problem (which is non-convex by nature) into a convex problem by considering a relaxation in the matrix space.

\subsubsection*{Matrix estimator}
We are left with the problem of determining the entries of the matrix $v\otimes\overline{v}$ given spectrogram data $(\spect f(\omega))_{\omega\in\Omega}$.
In the best case (that is, if $\Omega\supseteq \Lambda$) we can directly observe the diagonal entries of the matrix as
$$
(v\otimes \overline{v})_{\lambda,\lambda} = \spect f(\lambda),
$$
while no off-diagonal entries are provided. 
To go beyond the diagonal we construct a predictor function $V$ based on holomorphically extending the spectrogram. In Section \ref{subsec:evalopmotivation} we prove that there exist $L:\R^2\times \R^2\to \C^2$ and $Q:\R^2\times\R^2 \to \C$ such that 
\begin{equation}\label{eq:reltensorholext}
\G f(p+u) \overline{\G f(p)} = F(L(p,u)) \cdot e^{Q(p,u)},\quad p,u\in\R^2
\end{equation} 
where $F:\C^2\to \C$ is the unique entire extension of $\spect f$ (that is, $F\in\mathcal{O}(\C^2)$ and $F\big|_{\R^2}=\spect f$).
Assuming that $F$ is known, it follows from \eqref{eq:reltensorholext} that 
$$
V(\lambda,\lambda') = F(L(\lambda',\lambda-\lambda')) \cdot e^{Q(\lambda',\lambda-\lambda')}
$$
correctly predicts the entries of the matrix $v\otimes\overline{v}$. 
In practice it is not viable to identify $F$ exactly. 
To obtain a feasible variant of the idea we will pick a finite-dimensional Ansatz space $\{F_A,\,A\succeq 0\}$ whose members are parameterized by positive definite matrices of a fixed size. Further, we formulate a semi-definite program over $\{A\succeq 0\}$, the convex cone of positive definite matrices such that its solution $A_*$ gives rise to an entire function $F_{A_*}$ which serves as an approximation for $F$. The resulting predictor is then
$$
\tilde{V}(\lambda,\lambda') = F_{A_*}(L(\lambda',\lambda-\lambda')) \cdot e^{Q(\lambda',\lambda-\lambda')}
$$
Analyzing the accuracy of the estimator reveals that $\tilde{V}(\lambda,\lambda')$ is a reliable approximation for $(v\otimes\overline{v})_{\lambda,\lambda'}$ if 
$$
(\lambda,\lambda') \in\mathcal{P}\coloneqq \{(\lambda,\lambda')\in \Lambda\times\Lambda,\, |\lambda-\lambda'|\le r\},
$$
with $r>0$ a suitable threshold parameter, whereas accuracy cannot be guaranteed when $|\lambda-\lambda'|>r$. Hence, $\tilde{V}$ can only be used to predict relatively few matrix entries.

\subsubsection*{Matrix completion}
That matrix completion techniques can be quite useful in the context of solving phase retrieval problems is well documented \cite{candes:prmc}.
For the problem at hand, to infer the remaining entries of the matrix $v\otimes \overline{v}$ we exploit the structural knowledge we have on that matrix: $v\otimes\overline{v}$ has rank one and is positive semi-definite. We attack the matrix completion problem by solving the convex relaxation 
\begin{quote}
    Find $Y\succeq 0$ s.t. $|Y_{\lambda,\lambda'}-\tilde{V}(\lambda,\lambda')| \le \tau$, for all $(\lambda,\lambda')\in\mathcal{P}$,
\end{quote}
with $\tau>0$ a suitable threshold. Quite conveniently, the relaxation can be formulated in terms of a SDP.
We pick up a result due to Demanet and Jugnon \cite{demanet17} (and establish a slight modification) to show that the convex relaxation is guaranteed to yield an accurate result provided that the data exhibit sufficient connectivity. 
The precise quantitative concept to measure connectivity is the spectral gap of the (vertex-weighted) graph Laplacian associated to $f$ (see Definition \ref{def:sagsp}).

\begin{figure}
\begin{tikzpicture}
  \matrix (m) [matrix of math nodes,row sep=4em,column sep=5em,minimum width=3em]
  {
     f\in L^2(\R)  & \spect f  \in [0,\infty)^\Omega \\
     v\in \C^\Lambda & v\otimes\overline{v}\in\C^{\Lambda\times\Lambda} \\};
  \path[-stealth]
    (m-2-1) edge [dashed,->] node [left] {dual window} (m-1-1)
    (m-1-1) edge node [above] {$\mathcal{A}$} (m-1-2)
    (m-2-2) edge [dashed,->] node [below] {leading eigenvector} (m-2-1)
    (m-1-2) edge [dashed,->] node [right] {estimator \& completion} (m-2-2);
\end{tikzpicture}
\caption{Schematic decomposition into the respective substeps in the construction of the approximate left inverse $\tilde{\mathfrak{L}}$ of $\mathcal{A}$.}
\end{figure}
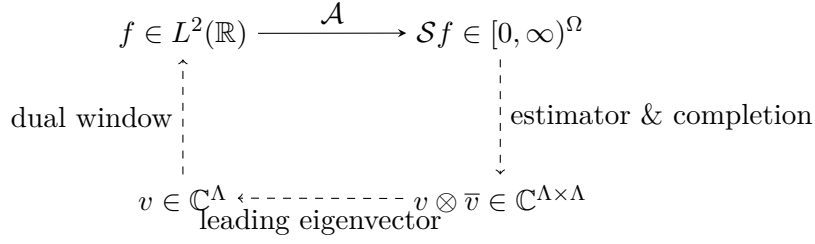

\subsection{Outline}
The remainder of this article is organized as follows.
Section \ref{sec:results} contains the precise definition of the reconstruction scheme as well as statements concerning the accuracy of the algorithm, which constitute the main results of this article.
In Section \ref{sec:numericalexp} we present some numerical experiments in order to verify the capabilities of the proposed algorithmic solution empirically.
Section \ref{sec:prelims} forms a collection of required preliminary material.
In Section \ref{sec:recfromincomplete} we discuss an inexact version of the dual window reconstruction formula \eqref{eq:dualwindowrec}.
Section \ref{sec:phasechanges} is concerned with a detailed analysis of the estimator $\tilde{V}$.
Finally, Section \ref{sec:proofthm1} contains the proof of the first main theorem, Theorem \ref{thm:main}.

\section{Algorithm and Results}\label{sec:results}

\subsection{The reconstruction scheme} This section is concerned with presenting the algorithmic solution of the phase retrieval problem at hand. 
As the method is quite involved we need to introduce a few objects and quantities first.\\
Throughout $\mathfrak{a}=\frac1{\sqrt2}$. The signal to be reconstructed is represented by $f\in L^2(\R)$. $\Gamma\subseteq \mathfrak{a}\Z^2$ denotes a finite subset of the lattice.
Furthermore, we denote the cone of positive definite matrices by $\mathfrak{A}_+(\Gamma)=\{A\in\C^{\Gamma\times\Gamma}:\, A\succeq 0\}$.
\begin{definition}[Ansatz function]
Let $\mathcal{J}=\begin{psmallmatrix}
    0&1\\
    -1&0
\end{psmallmatrix}$.
    Given $\lambda,\mu\in\R^2$, let
    $$
    \Phi_{\lambda,\mu}(z) := C(\lambda,\mu)
     e^{i\pi z^T \mathcal{J} (\lambda-\mu)} \cdot 
    e^{-\pi \big( z-\frac{\lambda+\mu}2\big)^2},
    \quad z\in\C^2,
    $$
    where $C(\lambda,\mu):= \exp\left\{ -\frac\pi4 \big|\lambda-\mu\big|^2 +\pi i (\lambda_1\lambda_2-\mu_1\mu_2)\right\}$.
    To every 
    $A\in \C^{\Gamma\times \Gamma}$ we associate the entire Ansatz function $F_A$ defined by
    $$
    F_A(z):=\sum_{\lambda,\mu\in\Gamma} A_{\lambda,\mu}\cdot \Phi_{\lambda,\mu}(z),\quad z\in\C^2.
    $$
\end{definition}
The prediction process involves evaluating a certain  entire function of two complex variables.
\begin{definition}[Evaluation operator]\label{def:evalop}
Let 
\begin{align*}
    L: &\begin{cases}
        \R^2 \times \R^2 &\rightarrow \C^2\\
        (p,u) &\mapsto p + \frac12
    \begin{psmallmatrix}
        1 & -i\\
        i & 1
    \end{psmallmatrix}
    u
    \end{cases}
    \\
    Q: &\begin{cases}
        \R^2 \times \R^2 &\rightarrow \C\\
        (p,u) &\mapsto -\frac\pi2 |u|^2-\pi i(2p_1+u_1)u_2
    \end{cases}
\end{align*}
    Given $G\in\mathcal{O}(\C^2)$, the \emph{evaluation} of $G$ is then defined by 
    $$
    \E[G](p,u) := G(L(p,u))\cdot e^{Q(p,u)}.
    $$
\end{definition}

We shall denote the canonical dual window of the Gabor frame $(\pi(\lambda)\varphi)_{\lambda\in\mathfrak{a}\Z^2}$ by $\psi$. Further, we point out that an explicit formula for $\psi$ is available, see equation \ref{eq:formulapsi}.\\
With this we are ready to formulate the  reconstruction scheme which is handed noisy spectrogram samples 
$
\sigma_\omega \approx \spect f(\omega), \omega\in\Omega
$
with $\Omega\subseteq \R^2$ finite and aims to reconstruct $f$ up to a global phase factor.

\begin{algorithm*}[H]\label{alg1}
\DontPrintSemicolon
  \Parameter{$r,\varepsilon,\varepsilon' >0$ and $\Lambda,\Gamma\subseteq \mathfrak{a}\Z^2$ finite}
  \KwInput{$\sigma\in \R^\Omega$}
  \KwOutput{$f_\ast \in L^2(\R)$}
  
  \underline{Step 1:} Solve the CP
  \begin{equation}\label{eq:entextcp}
\begin{aligned}
\min_{A\in\mathfrak{A}_+(\Gamma)} \quad & \max_{\lambda\in\Gamma} A_{\lambda,\lambda}\\
\textrm{s.t.} \quad & |F_A(\lambda) -\sigma_\lambda| \le \varepsilon, \quad \lambda\in \Omega,
\end{aligned}
\end{equation}
and denote its solution by $A_*$.

\underline{Step 2:} Set $\mathcal{P}\coloneqq \{(\lambda,\lambda')\in\Lambda\times\Lambda:\, |\lambda-\lambda'|\le r\}$ and define coefficients
\begin{equation}
    T_{\lambda',\lambda}:= \E[F_{A_*}](\lambda,\lambda'-\lambda),\quad (\lambda, \lambda') \in \mathcal{P}.
\end{equation}
Solve the CP 
\begin{equation}\label{eq:convfeasprob}
\text{find } Y\in\mathfrak{A}_+(\Lambda), \text{ s.t. } |Y_p - T_p|\le \varepsilon', \quad p\in\mathcal{P}.
\end{equation}

\underline{Step 3:} Extract $v\in \C^\Lambda$ an eigenvector corresponding to the largest eigenvalue of $Y$, and set
$$
f_* := \sqrt{\trace(Y)} \sum_{\lambda\in \Lambda} v_\lambda\cdot \pi(\lambda)\psi.
$$
\caption{Reconstruction procedure}
\end{algorithm*}

\begin{remark}[CPs are SDPs]
Both, the CP in step $1$ as well as the CP in step $2$ can actually be recast into a SDP.

\smallskip

Indeed, to see this for the CP in step $1$ we refer to 
Lemma \ref{lem:matrixevaluationFA}, where we explicitly construct for any given point $z=(x,y)\in\R^2$, a hermitian matrix $W_z\in \mathfrak{A}_+(\Gamma)$ with the property
$$
F_A(z) = \langle A, W_z\rangle_F, \quad A\in \mathfrak{A}_+(\Gamma).
$$
Furthermore, we add a single element '$\infty$' to the index set $\Gamma$ and denote $\tilde{\Gamma}=\Gamma \cup \{\infty\}$. 
With this, we introduce a slack variable $\mu\in\R$ by considering
$$
\tilde{A} = 
\left(
\begin{array}{c|c}
    A & \ast\\
    \hline
    \ast &\mu
\end{array}
\right)
\in \mathfrak{A}_+(\tilde{\Gamma}).
$$
It is not difficult to see that $A$ is a solution of \eqref{eq:entextcp} if and only if $\tilde{A}$ is a solution of the SDP
\begin{equation}
    \begin{aligned}
        \min_{\tilde{A}\in \mathfrak{A}_+(\tilde{\Gamma})} \mu &= \min_{\tilde{A}\in \mathfrak{A}_+(\tilde{\Gamma})} \Big\langle \tilde{A},
        \left(
        \begin{array}{c|c}
        0 &0\\
        \hline
        0 &1
        \end{array}
        \right)
        \Big\rangle_F\\
        \text{s.t. } \, &\left| \Big\langle \tilde{A}, 
        \left(
        \begin{array}{c|c}
        W_\lambda & 0\\
        \hline
        0 & 0
        \end{array}
        \right)
        \Big\rangle_F - \sigma_\lambda \right| \le \varepsilon, \quad \lambda\in\Omega\\
         & \langle \tilde{A}, 
        D_\gamma \rangle
         \le 0, \quad \gamma\in\Gamma
    \end{aligned}
\end{equation}
where the diagonal matrix $D_\gamma$ is given by 
$$
D_\gamma(\alpha,\beta) = \begin{cases}
    -1,\quad &\alpha=\beta=\infty\\
    1,  &\alpha=\beta=\gamma\\
    0,  &\text{otw.}
\end{cases}
$$
For the corresponding statement with regards to the CP in step $2$, we refer to Lemma \ref{lem:matrixcomplsdp}, where we explicitly construct,
given an arbitrary point $p\in \Lambda\times\Lambda$, a pair of hermitian matrices $E_p^r$ and $E_p^i$ with the property that for each 
$A\in \mathfrak{A}_+(\Lambda)$, 
$$
\langle A, E_p^r\rangle_F = \Re(A_p), \quad \text{and} \quad \langle A, E_p^i\rangle_F = \Im(A_p).
$$
Therefore, every solution of the SDP 
\begin{equation}
    \begin{aligned}
    \min_{Y\in\mathfrak{A}_+(\Lambda)} &\langle Y, 0\rangle_F\\
    \text{s.t.} \quad & |\langle Y, E_p^r \rangle_F - \Re T_p| \le \frac{\varepsilon'}2, \quad p\in \mathcal{P}\\
    & |\langle Y, E_p^i \rangle_F - \Im T_p| \le \frac{\varepsilon'}2, \quad p\in \mathcal{P}
    \end{aligned}
\end{equation}
is also a solution of the CP \eqref{eq:convfeasprob}.
Finally, we point out that off-the-shelf solvers (in particular, the CVXPY package which was used to produce the experiments carried out in Section \ref{sec:numericalexp}) deal with these aspects automatically and internally. For example, to solve the CP \eqref{eq:entextcp} we simply declared $\max_{\lambda\in\Gamma} A_{\lambda,\lambda}$ as the objective function to be minimized.
\end{remark}

\subsection{Reconstruction of Gabor coefficients}
The intermediate objective of the algorithm is concerned with the reconstruction of the coefficients 
$\{\G f(\lambda),\lambda\in\Lambda\}$. 
Before we present the result which guarantees the accuracy of this first component of the method we again have to settle some terminology.\\

First we attach a graph to the provided spectrogram samples in order to specify the relevant quantity of connectivity.  
\begin{definition}[Signal associated graph and spectral gap]\label{def:sagsp}
    Given a triple consisting of $f\in L^2(\R)$, a finite set     
    $\Lambda\subseteq \mathfrak{a}\Z^2$ and $r>0$,  
    the signal associated graph $G=(V,E,\alpha)$ is the vertex weighted graph with vertex set $V=\Lambda$, edge set $E$ defined by 
    $$
    (u,v)\in E \quad \Leftrightarrow \quad 0<|u-v|<r,
    $$
    and vertex weights $\alpha_v=\spect f(v)$, $v\in V$.
    Let $0=\lambda_1\le \lambda_2 \le \ldots$ denote the  eigenvalues of the Laplacian
    \footnote{we recall the definition of the Laplacian of a vertex weighted graph in Section \ref{sec:vwg}}
    of $G$. The \emph{spectral gap} is defined as
    $$
    \lambda_2(f,\Lambda,r) := \lambda_2.
    $$
\end{definition}

\begin{remark}[Spectral gap and connectivity]
    We recall in Lemma \ref{lem:graphconnection} that $G$ is connected if and only if there is an actual spectral gap, i.e. if $\lambda_2$ is strictly positive. This suggests to conceive the spectral gap $\lambda_2$ as a concept to quantify the connectivity of $G$: the larger the spectral gap of the Laplacian, the more connected is $G$.\\
    A formal approval of this intuition has been established by Cheeger \cite{Cheeger:lowerbound} in the context of Riemannian geometry by relating the spectral gap to a geometric invariant coined the {\em Cheeger constant}. Subsequently, respective results have also been encountered in the world of graph theory \cite{chung:spgrth}.
\end{remark}

In the remainder we will stick to index sets $\Lambda,\Gamma,\Omega$ of a rather simple form.
Namely, we assume that these sets arise as the intersection between a centered rectangle with a square lattice. 
Precisely, given numbers $T,S,R,s>0$ we set 
    \begin{align}
         \Lambda &= ([-T,T]\times [-S,S]) \cap \mathfrak{a}\Z^2, \label{def:Lambda} \\
        \Omega &= ([-T-R,T+R]\times [-S-R,S+R]) \cap s\Z^2, \label{def:Omega}\\
        \Gamma &= ([-T-2R,T+2R]\times [-S-2R,S+2R])\cap \mathfrak{a}\Z^2. \label{def:Gamma}
    \end{align}
The parameters need to be chosen reasonably for the reconstruction method to succeed. 
For this purpose we introduce the following technical notion.
\begin{definition}
    Let $f\in L^2(\R)$. 
    Given parameters $T,S,R,r,s,\varepsilon,\varepsilon>0$ let $\Lambda,\Gamma,\Omega$ be defined as in \eqref{def:Lambda}, \eqref{def:Omega} and \eqref{def:Gamma}, respectively.\\ 
    We say that the parameters are \emph{well calibrated} if all of the following relations hold true:
    \begin{align}
        \varepsilon &\le \left[\frac{e^{-\frac{17\pi}{32}r^2}}{1.33\times 10^5} \times \min\left\{\frac{\|\G f\|_{\ell^2(\Lambda)}^2}{|\Lambda|^2}, \frac{ \lambda_2(f,\Lambda,r)}{192 r^2 }\right\} \right]^2, \label{eq:condepsilon}\\
        \varepsilon'&=(3.1\times 10^4) \sqrt{\varepsilon} e^{\frac{17\pi}{32}r^2}, \label{eq:condepsilonprime}\\
         s &\le \frac{0.3}{\sqrt{ \ln \left(\frac2{3\varepsilon} \right)}}, \label{eq:conds}\\
        R &\ge \max\left\{ 2.1+0.9 \sqrt{\ln \left(\frac1\varepsilon\right)}, 
        \frac{r+s^{-1}}2
        \right\} \label{eq:condR}
    \end{align}
\end{definition}

\begin{remark}[Number of required samples]
    The parameters $T,S,r$ are considered to be fixed numbers. Typically $r\in [0,2]$, and expressions involving $r$ in \eqref{eq:condepsilon} - \eqref{eq:condR} may just be regarded to be constants.
    The parameter $\varepsilon$ plays the role of the desired error margin.
   From a qualitative perspective (neglecting numerical constants) we get that the required number of samples in the ``well-calibrated regime'' is roughly (asymptotically for  $\varepsilon$ close to $0$)
   \begin{align*}
       |\Omega| & = \Big| s\Z^2 \cap ([-T-R,T+R] \cap [-S-R,S+R])\Big| \\
       &\asymp \frac{(T+R)(S+R)}{s^2}\\
       &\asymp \ln \left(\frac2{3\varepsilon}\right) \cdot \left( T + \sqrt{\ln \left(\frac1\varepsilon \right)} \right) \cdot \left( S + \sqrt{\ln \left(\frac1\varepsilon \right)} \right)
   \end{align*}
   Hence, $|\Omega| = \mathcal{O}(\ln^2 \frac1\varepsilon)$ as $\varepsilon\to 0$.
\end{remark}

The first main result states that the proposed method recovers the samples $(\G f(\lambda))_{\lambda\in\Lambda}$ accurately under appropriate assumptions and reads as follows.
\begin{theorem}\label{thm:main}
    Let $f\in L^2(\R)$ be such that $\|\spect f\|_{L^\infty}\le 1$.\footnote{the assumption that $\|\spect f\|_{L^\infty}\le 1$ is purely there for aesthetic reasons in order to keep bounds and implicit constants as simple as possible}
    Suppose that $T,S,R,r,s,\varepsilon,\varepsilon'$ are well calibrated.
    Let $\sigma\in \R^{\Omega}$ be such that 
    $$
    \| \sigma- \spect f\|_{\ell^\infty(\Omega)} \le \frac\varepsilon2.
    $$
    Then it holds that both convex problems in Algorithm \ref{alg1} are feasible.
    Moreover, the top eigenvector $v\in \C^\Lambda$  satisfies that 
    \begin{multline}\label{eq:bounddelift}
    \min_{\theta\in\R} \left|\sqrt{\trace(Y)} v - e^{i\theta} (\G f(\lambda))_{\lambda\in\Lambda} \right| \\
    \le 177 
    e^{0.84 r^2} \cdot \left(1+20 r \sqrt{\frac{\|\G f\|_{\ell^2(\Lambda)}^2}{\lambda_2(f,\Lambda,r)}} \right)\cdot\sqrt[4]\varepsilon.
    \end{multline}
\end{theorem}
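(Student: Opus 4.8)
The plan is to work through the four stages of the schematic diagram from top-right to bottom-left: establish feasibility of each of the two convex programs by exhibiting the ``ground truth'' as a feasible point, and then control two perturbation estimates -- one produced by Step~1 and propagated through the estimator, one produced by the matrix-completion step -- whose composition yields the quartic root $\sqrt[4]\varepsilon$ appearing in \eqref{eq:bounddelift}.

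\emph{Feasibility of the first program and an a~priori bound.} First I would exhibit an explicit feasible point for \eqref{eq:entextcp}. By the truncated (inexact) dual-window reconstruction of Section~\ref{sec:recfromincomplete} together with the construction of the Ansatz space in Section~\ref{sec:phasechanges}, there is a positive semidefinite matrix $A^\natural\in\mathfrak{A}_+(\Gamma)$, built from the Gabor samples of $f$ over $\Gamma$, such that its entire function $F_{A^\natural}$ is a good truncation of the entire extension $F$ of $\spect f$: because $\Gamma$ is the $2R$-enlargement of $\Lambda$ (so it contains $\Omega$ with a margin of width $R$) and the Gabor transform has Gaussian localisation, the calibration \eqref{eq:condR} -- which forces $R\gtrsim\sqrt{\ln(1/\varepsilon)}$ -- gives $|F_{A^\natural}(\lambda)-F(\lambda)|\le\varepsilon/2$ for $\lambda\in\Omega$ (and on the complex neighbourhoods of $\Omega$ needed below), while $\max_{\gamma\in\Gamma}A^\natural_{\gamma,\gamma}\lesssim\|\spect f\|_{L^\infty}\le1$. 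Since $F|_{\R^2}=\spect f$ and $\|\sigma-\spect f\|_{\ell^\infty(\Omega)}\le\varepsilon/2$, the triangle inequality shows $A^\natural$ is feasible. Consequently the minimizer $A_*$ of \eqref{eq:entextcp} satisfies $\max_{\lambda\in\Gamma}(A_*)_{\lambda,\lambda}\lesssim1$ -- hence, by positive semidefiniteness, all of its entries are $O(1)$ -- and $|F_{A_*}(\lambda)-\spect f(\lambda)|\le\tfrac32\varepsilon$ for $\lambda\in\Omega$.

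\emph{Accuracy of the estimator -- the main obstacle.} This is the technical heart, carried out in Section~\ref{sec:phasechanges}, and the step I expect to be genuinely delicate. One has to convert the two facts just recorded -- $F_{A_*}$ is an entire function of controlled Bargmann-type growth (uniform bound on its coefficient matrix $A_*$), and $F_{A_*}$ agrees with $\spect f$ to within $\tfrac32\varepsilon$ on the grid $\Omega$ -- into an estimate for the evaluated predictor $T_{\lambda',\lambda}=\E[F_{A_*}](\lambda,\lambda'-\lambda)$ against the true tensor entry $\G f(\lambda)\overline{\G f(\lambda')}$ for $(\lambda,\lambda')\in\mathcal{P}$, i.e.\ for $|\lambda-\lambda'|\le r$. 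I would argue in three moves: a sampling inequality reflecting the band-limited/analytic structure of the spectrogram, made applicable because $\Omega$ is dense enough by \eqref{eq:conds} ($s\lesssim1/\sqrt{\ln(1/\varepsilon)}$), to pass from smallness on $\Omega$ to smallness on a real box; a Hadamard three-lines/three-circles estimate continuing that smallness into a complex polydisc of radius of order $r$, which costs both a square root (hence the $\sqrt\varepsilon$) and the exponential factor $e^{\frac{17\pi}{32}r^2}$ stemming from the growth $|e^{-\pi(z-c)^2}|\le e^{\pi|\im z|^2}$ of the Ansatz functions; and finally the identity \eqref{eq:reltensorholext} together with Definition~\ref{def:evalop}, noting that $|e^{Q(p,u)}|\le1$ and $|\im L(p,u)|\lesssim|u|\le r$ add no further growth. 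The net conclusion is $|T_{\lambda',\lambda}-\G f(\lambda)\overline{\G f(\lambda')}|\le\varepsilon'$ for $(\lambda,\lambda')\in\mathcal{P}$, with $\varepsilon'$ exactly the quantity \eqref{eq:condepsilonprime}. The quantitative stability of analytic continuation is the only non-routine ingredient here; everything else is cited or bookkeeping.

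\emph{The second program, matrix completion, and de-lifting.} Write now $v=(\G f(\lambda))_{\lambda\in\Lambda}$. The rank-one matrix $v\otimes\overline{v}\in\mathfrak{A}_+(\Lambda)$ is positive semidefinite and, by the preceding paragraph, satisfies $|(v\otimes\overline{v})_p-T_p|\le\varepsilon'$ for $p\in\mathcal{P}$; hence \eqref{eq:convfeasprob} is feasible, completing the feasibility claim for both programs. Any solution $Y$ then obeys $|Y_p-(v\otimes\overline{v})_p|\le2\varepsilon'$ on $\mathcal{P}$, and I would invoke the (slightly modified) matrix-completion theorem of Demanet and Jugnon \cite{demanet17}: since $Y$ and $v\otimes\overline{v}$ are both positive semidefinite and agree to within $2\varepsilon'$ on the edge set of the signal-associated graph, whose spectral gap is $\lambda_2=\lambda_2(f,\Lambda,r)$ (Definition~\ref{def:sagsp}), one obtains a bound of the shape $\|Y-v\otimes\overline{v}\|_F\lesssim r\,\|\G f\|_{\ell^2(\Lambda)}^2\,\lambda_2^{-1/2}\,\sqrt{\varepsilon'}$; the calibration \eqref{eq:condepsilon} is precisely what forces $\varepsilon'\lesssim\lambda_2/r^2$ and $\varepsilon'\lesssim\|\G f\|_{\ell^2(\Lambda)}^2/|\Lambda|^2$, keeping the right-hand side well below $\|\G f\|_{\ell^2(\Lambda)}^2$ and hence placing us in the perturbative regime where $v\otimes\overline{v}$ is the dominant part of $Y$ and the eigengap of $Y$ does not collapse. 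A routine de-lifting argument -- Weyl's inequalities to compare $\lambda_{\max}(Y)$ and $\trace(Y)$ with $\|\G f\|_{\ell^2(\Lambda)}^2$, Davis--Kahan for the leading unit eigenvector, and the elementary identity $\min_{\theta\in\R}\|a-e^{i\theta}b\|^2=\|a\|^2+\|b\|^2-2|\langle a,b\rangle|$ -- then transfers the Frobenius estimate, up to a constant and a lower-order additive term (the ``$1$'' in \eqref{eq:bounddelift}, absorbing the discrepancy between $\trace(Y)$ and $\|\G f\|_{\ell^2(\Lambda)}^2$), to the asserted bound on $\sqrt{\trace(Y)}\,v$. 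Substituting $\varepsilon'\asymp\sqrt\varepsilon\,e^{\frac{17\pi}{32}r^2}$ and collecting the numerical constants -- note $\sqrt{3.1\times10^4}\approx177$ and $\tfrac12\cdot\tfrac{17\pi}{32}\approx0.835$ -- reproduces \eqref{eq:bounddelift}.
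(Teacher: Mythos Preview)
Your proposal is correct and follows essentially the same approach as the paper: the same four-stage decomposition (feasibility of the first CP via the truncated dual-window coefficients, estimator accuracy via a sampling inequality plus Hadamard three-lines propagation into the complex domain, feasibility of the second CP via the ground-truth rank-one matrix, and de-lifting via the modified Demanet--Jugnon result). The only cosmetic difference is that the paper packages the de-lifting step directly into its Theorem~\ref{thm:demanetmod}/Corollary~\ref{cor:delifting} (invoking \cite[Lemma~2]{demanet17}) rather than citing Weyl and Davis--Kahan, but this is the same perturbation argument.
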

The proof of Theorem \ref{thm:main} can be found in Section \ref{sec:proofthm1}. 
\begin{remark}[On the choice of $r$]
    According to Theorem \ref{thm:main} the accuracy of the prediction is driven by  the quantity
    \begin{equation}
        C_{stab}(f,r)\coloneqq  e^{0.84r^2}\left( 1+ 20r\sqrt{\frac{\|\G f\|_{\ell^2(\Lambda)}^2}{\lambda_2(f,\Lambda,r)}} \right)
    \end{equation}
    The statement is empty if $\lambda_2(f,\Lambda,r)=0$, i.e., if the associated graph is not connected.
    For a generic  $f\in L^2(\R)$, we have that $\G f$ does not vanish on the finite set $\Lambda$. Thus -- provided that $r\ge \mathfrak{a}$ -- in general (for generic $f$) we have that the graph is indeed connected.\\
    Furthermore, the performance of the reconstruction method may crucially depend on the choice of $r$.
    Given $f$, the optimal\footnote{in the sense that the upper bound provided by Theorem \ref{thm:main} is then the tightest possible} choice is  
    $$r=r_*(f)\coloneqq \argmin_{r>0} C_{stab}(f,r).$$
    It is important to note that $r_*(f)$ can be computed if samples $(\spect f(\lambda))_{\lambda\in\Lambda}$ are given: the Laplacian (and therefore also the spectral gap $\lambda_2(f,\Lambda,r)$) depends solely on $(\spect f(\lambda))_{\lambda\in\Lambda}$ and moreover as $r\mapsto\lambda_2(f,\Lambda,r)$ is piecewise constant, it suffices to check a finite number of candidates $r$ (cf. Figure \ref{fig:diffrs} for the three smallest choices).
\end{remark}

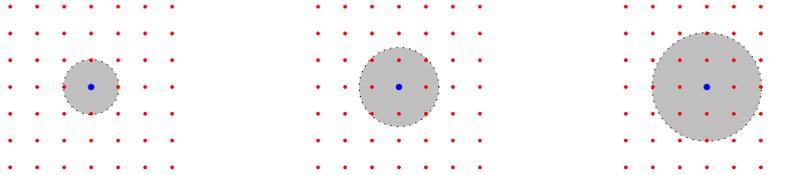
\begin{figure}[ht]
\centering
\begin{subfigure}{0.31\linewidth}
\centering
\begin{tikzpicture}[scale=0.5]
\draw [fill=lightgray,dotted] (0,0) circle (0.73);
\foreach \x in {-3,...,3} \foreach \y in {-3,...,3} \draw[red,fill=red] (\x*0.71,\y*0.71) circle [radius=1pt];
\draw[blue, fill=blue] (0,0) circle [radius=2pt];
\end{tikzpicture}
\end{subfigure}
\begin{subfigure}{0.31\linewidth}
\centering
\begin{tikzpicture}[scale=0.5]
\draw [fill=lightgray,dotted] (0,0) circle (1.05);
\foreach \x in {-3,...,3} \foreach \y in {-3,...,3} \draw[red,fill=red] (\x*0.71,\y*0.71) circle [radius=1pt];
\draw[blue, fill=blue] (0,0) circle [radius=2pt];
\end{tikzpicture}
\end{subfigure}
\begin{subfigure}{0.31\linewidth}
\centering
\begin{tikzpicture}[scale=0.5]
\draw [fill=lightgray,dotted] (0,0) circle (1.44);
\foreach \x in {-3,...,3} \foreach \y in {-3,...,3} \draw[red,fill=red] (\x*0.71,\y*0.71) circle [radius=1pt];
\draw[blue, fill=blue] (0,0) circle [radius=2pt];
\end{tikzpicture}
\end{subfigure}
\caption{Three different choices for the parameter $r$. For the values $r=\mathfrak{a}=\frac1{\sqrt2}$ (left), $r=1$ (center) and $r=\sqrt2$ (right) the blue vertex in the middle has $4,8$ and $12$ neighbors, respectively.}
\label{fig:diffrs}
\end{figure}

\begin{figure}[ht]
\centering
\includegraphics[width=\textwidth]{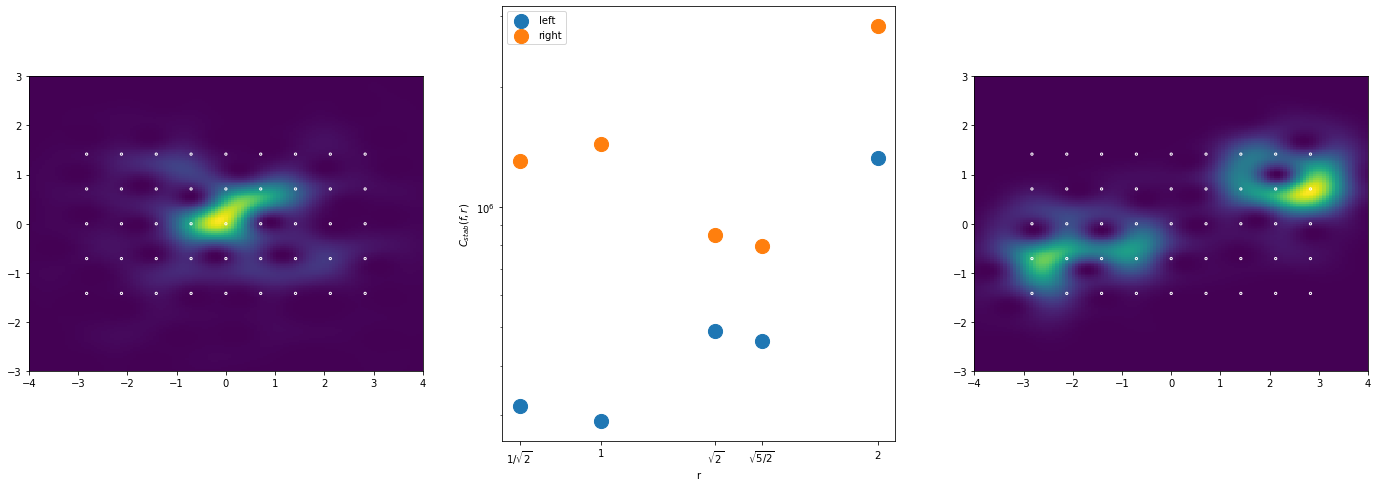}
\caption{We compare two different functions: one having a spectrogram which is connected (left) and the other one consisting of two connected components (right).
The white points indicate the finite lattice $\Lambda\subseteq \mathfrak{a}\Z^2$. The graph in the center shows the dependence of $C_{stab}(f,r)$ on the parameter $r$ for each of the functions. These computations suggest that the performance of the algorithm will benefit from choosing the parameter $r$ to be relatively large ($r=\sqrt{5/2}$) in the disconnected case, while it is favorable to pick $r=1$ in the connected case.  }
\label{fig:ropt}
\end{figure}

\begin{remark}[Comparison with existing stability results]
Next we put the above result into perspective with earlier stability results for the Gabor phase retrieval problem. More specifically, with the outcomes of \cite{grohs:stablegaborsc,grohs:stablegabormulti}, where it has been shown that on a domain $\Omega\subseteq \R^2$ on which $|\G f|$ is connected, phase information is stably encoded in the phase-less data $|\G f|$.
Even though inequality \eqref{eq:bounddelift} is of discrete type, qualitatively it is very much reminiscent of the aforementioned results:
While the results on the continuous level quantify connectivity in terms of the Poincaré constant (which is known to coincide with the spectral gap of the Laplacian if objects are correctly defined), we  here employ the spectral gap of the corresponding graph Laplacian.
To state the obvious, the substantial advantage of the novel results of this article is that we have a  mean to actually recover phase information.
\end{remark}

\subsection{Reconstruction from incomplete Gabor coefficients}
Given $\Lambda\subseteq \mathfrak{a}\Z^2$ we formally define
an operator $\mathcal{R}_\Lambda:\C^\Lambda\to L^2(\R)$ by 
\begin{equation}\label{eq:recdualwindow}
\mathcal{R}_\Lambda(c) = \sum_{\lambda\in\Lambda} c_\lambda \pi(\lambda)\psi, \quad c=(c_\lambda)_{\lambda\in\Lambda}
\end{equation}
where $\psi$ denotes the dual window of the Gabor frame $(\pi(\lambda)\varphi)_{\lambda\in\mathfrak{a}\Z^2}$.
If $\Lambda=\mathfrak{a}\Z^2$ and $c=(\G f(\lambda))_{\lambda\in\Lambda}$ the sum in \eqref{eq:recdualwindow} converges unconditionally in $L^2(\R)$ and exactly reconstructs $f$, that is, $f=\mathcal{R}_\Lambda(c)$, cf. Section \ref{sec:gaussgabor}.\\

In practice, one only has a finite number of possibly noisy samples at one's disposal. It is the purpose of the next result to control the reconstruction error in this inexact setting. 
As before, given $T,S>0$ let us denote
$$
\Lambda=  ([-T,T]\times[-S,S])\cap \mathfrak{a}\Z^2.
$$
Given the fact that only a limited amount of samples is available (particularly, $\Lambda\subseteq [-T,T]\times \mathbb{R}$) one cannot expect that $f$ is to be accurately reconstructed outside the interval $[-T,T]$. Moreover, given that there is no information about large frequencies ($>S$) it is to be expected that the reconstruction is accurate only if the function to be reconstructed is relatively smooth.\\

Before we state the result we introduce two quantities.
Given $f\in L^2(\R)$ we use 
$$
\eta(f)\coloneqq 
\sup_{x\in\R} \|f\cdot T_x\varphi^{\frac12}\|_{L^2}
=
2^{\frac18} \sup_{x\in\R}  \left( \int_\R |f(t)|^2 e^{-\pi(t-x)^2}\,\mbox{d}t\right)^{1/2},
$$
to capture the maximal local $L^2$-energy of $f$.
Given $S>0$, let
\begin{equation*}
\kappa_S(f) \coloneqq \sup_{x\in\R} \left(  \int_{|\omega|>S} |\G f(x,\omega)|^2\,\mbox{d}\omega\right)^{1/2} =\sup_{x\in\R} \|\ft\{f e^{-\pi(\cdot-x)^2}\}\|_{L^2([-S,S]^c)}.
\end{equation*}
The quantity $\kappa_S(f)$ may be understood as a measure of smoothness of $f$: If $\kappa_S(f)$ is negligibly small, then all Gaussian localizations $\{f e^{-\pi(\cdot-x)^2}, x\in\R\}$ are close to bandlimited (with bandwidth $2S$), and vice versa.\\

\smallskip
The second main result reads as follows.
\begin{theorem}\label{thm:main2}
    Let $T,S\in \mathfrak{a}\N$, let $0<\tau<T$.
    For all $f\in L^2(\R)$ and for all $c\in\C^\Gamma$ it holds that 
      \begin{multline}
        \|f-\mathcal{R}_\Lambda(c)\|_{L^2(-\tau,\tau)} \le \\ 
        6.82
        \left(
    \|\G f- c\|_{\ell^2(\Lambda)} +\sqrt{T+1} \cdot \kappa_S(f) + 
    \sqrt{\tau+1} e^{-\frac\pi{\sqrt2} (T-\tau)}\cdot \eta(f)
    \right).
    \end{multline}
\end{theorem}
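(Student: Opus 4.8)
The plan is to reconstruct $f$ on $(-\tau,\tau)$ by comparing $\mathcal{R}_\Lambda(c)$ against the exact dual-window expansion $f=\sum_{\lambda\in\mathfrak{a}\Z^2}\G f(\lambda)\,\pi(\lambda)\psi$, and to control the three error sources separately: (i) the coefficient mismatch $\G f-c$ on $\Lambda$; (ii) truncation of the frequency index (the fact that $\Lambda$ only sees $|\omega|\le S$), which will be governed by $\kappa_S(f)$; and (iii) truncation of the spatial index (the fact that $\Lambda$ only sees $|x|\le T$), which decays away from the truncation boundary and is measured against $\eta(f)$ with the Gaussian factor $e^{-\frac{\pi}{\sqrt2}(T-\tau)}$. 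I would write, with $\Lambda^{full}=\mathfrak{a}\Z^2$,
\begin{align*}
\|f-\mathcal{R}_\Lambda(c)\|_{L^2(-\tau,\tau)}
&\le \big\|\mathcal{R}_\Lambda(\G f-c)\big\|_{L^2(-\tau,\tau)}
+ \big\|\mathcal{R}_{\Lambda^{full}\setminus\Lambda}(\G f)\big\|_{L^2(-\tau,\tau)},
\end{align*}
and then split the second term according to whether the missing index $\lambda=(a,b)$ has $|b|>S$ (frequency tail) or $|a|>T$ (spatial tail).

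The first term is handled by the frame bound: since $(\pi(\lambda)\varphi)_{\lambda\in\mathfrak{a}\Z^2}$ is a Gabor frame and $\psi$ its canonical dual, $\mathcal{R}_{\Lambda^{full}}$ is bounded on $\ell^2$ with norm at most $B^{1/2}$, the square root of the upper frame bound, and the same bound holds for $\mathcal{R}_\Lambda$ by restriction. Plugging in the numerical value of $B$ for the Gaussian window at density two (this is where the explicit constant $6.82$ originates) gives $\|\mathcal{R}_\Lambda(\G f-c)\|_{L^2(\R)}\le 6.82\,\|\G f-c\|_{\ell^2(\Lambda)}$, which dominates the first summand on the right-hand side of the claimed inequality. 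The frequency-tail term is similar: $\sum_{|b|>S}$ runs over a subset of $\Lambda^{full}$, so $\|\mathcal{R}(\,\cdot\,)\|\le B^{1/2}\big(\sum_{(a,b)\in\mathfrak{a}\Z^2,\,|b|>S}|\G f(a,b)|^2\big)^{1/2}$, and a sampling/periodization estimate in the frequency variable bounds $\sum_b |\G f(a,b)|^2$ over $|b|>S$ by (essentially) $\int_{|\omega|>S-O(1)}|\G f(a,\omega)|^2\,d\omega$; summing over the $O(T)$ relevant values of $a$ and taking a supremum produces the factor $\sqrt{T+1}\,\kappa_S(f)$. Here one must be a little careful that the sampling inequality in $b$ costs only a constant (density one in frequency is exactly critical for the $e^{-\pi t^2}$-localized pieces, but a small safety margin, already absorbed into $S$ versus $S$, or into the constant, keeps it bounded); I would phrase this via a Plancherel–Pólya / Bessel-type bound for the Fourier transforms of the Gaussian-windowed pieces $f\,e^{-\pi(\cdot-x)^2}$.

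The spatial-tail term is the crux and the step I expect to be the main obstacle. For $\lambda=(a,b)$ with $|a|>T$ one needs pointwise control of $\pi(\lambda)\psi$ on the interval $(-\tau,\tau)$: since $\psi$ is Gaussian-like with Gaussian decay, $|\pi(a,b)\psi(t)|\lesssim e^{-c(t-a)^2}$, so on $|t|<\tau$ and $|a|>T$ this is $\lesssim e^{-c(T-\tau)^2}$ — but that alone is not enough, because there are infinitely many such $\lambda$ and we also need to reintroduce $\G f$. The right way is to not estimate term by term but to recognize $\sum_{|a|>T,\,b}\G f(a,b)\pi(a,b)\psi$ as (a piece of) the frame expansion of a function whose time-frequency content is concentrated at $|x|>T$, then use that the synthesis map, localized to the spatial strip $|a|>T$ and evaluated on $|t|<\tau$, has operator norm decaying like $e^{-\frac{\pi}{\sqrt2}(T-\tau)}$ — the exponent $\frac{\pi}{\sqrt2}=\pi\mathfrak{a}$ matching the Gaussian width at the lattice scale $\mathfrak{a}$. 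Concretely I would bound $\|\sum_{|a|>T}\dots\|_{L^2(-\tau,\tau)}$ by Cauchy–Schwarz in $a$ after extracting the Gaussian envelope $e^{-\frac{\pi}{\sqrt2}(|a|-\tau)}$ from each $\|\pi(a,b)\psi\|_{L^2(-\tau,\tau)}$, then summing the geometric series in $a$ (hence the $\sqrt{\tau+1}$ from the length of the target interval and a convergent $a$-sum), and controlling $\big(\sum_b |\G f(a,b)|^2\big)^{1/2}\lesssim \|f\,T_a\varphi^{1/2}\|_{L^2}\le \eta(f)$ via the same frequency-sampling bound as above together with $|\varphi(t-a)|\le \varphi^{1/2}(t-a)$ up to constants. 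Collecting the three contributions, absorbing all numerical losses into the single constant $6.82$, yields the stated bound. The delicate points are (a) getting the sharp exponent $\pi\mathfrak{a}$ rather than something smaller, which forces one to use the actual Gaussian (not merely ``fast decay''), and (b) making the two sampling-in-$b$ estimates genuinely constant-cost, for which I would invoke a standard Gabor Bessel bound rather than attempt an ad hoc periodization.
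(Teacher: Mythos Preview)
Your proposal is correct and follows essentially the same three-term decomposition as the paper (Proposition \ref{thm:recincsamples} plus the short deduction of Theorem \ref{thm:main2}). Two small corrections are worth noting. First, the constant $6.82$ is not $B^{1/2}\approx 1.54$; it arises as $3.1\times 2.2$, where $3.1$ is the Bessel (synthesis) bound for the dual system $(\pi(\lambda)\psi)_\lambda$ established in Lemma \ref{lem:rieszbdpsi}, and $2.2$ comes from converting the discrete frequency tail $\|\G f\|_{\ell^2(\Lambda')}$ to $\sqrt{T+1}\,\kappa_S(f)$ via the local subaveraging estimate of Lemma \ref{lem:gabordisccont}. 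Second, for the spatial tail the paper makes explicit the step your sketch needs but does not spell out: for each fixed spatial index $k$ one writes $\sum_\ell \G f(\mathfrak{a}k,\mathfrak{a}\ell)\,M_{\mathfrak{a}\ell}T_{\mathfrak{a}k}\psi = m\cdot T_{\mathfrak{a}k}\psi$ with $m$ a $\mathfrak{a}^{-1}$-periodic Fourier series, so that $\|g_k\|_{L^2(-\tau,\tau)}\le \|T_{\mathfrak{a}k}\psi\|_{L^\infty(-\tau,\tau)}\cdot\|m\|_{L^2(-\tau,\tau)}$; Parseval controls $\|m\|$ by $(\sum_\ell |\G f(\mathfrak{a}k,\mathfrak{a}\ell)|^2)^{1/2}$, which is then bounded by $\eta(f)$ via the Bessel estimate of Lemma \ref{lem:besselTg}, and the exponent $\pi/\sqrt2$ comes from the explicit pointwise bound $|\psi(t)|\le e^{-\pi|t|/\sqrt2}$ of Lemma \ref{lem:bdspsi} (only exponential, not Gaussian, decay of $\psi$ is used).
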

For the proof of Theorem \ref{thm:main2} we refer to Section \ref{sec:recfromincomplete}.
There are three different terms contributing to the upper bound on the reconstruction error.
\begin{itemize}[--]
    \item The first term is simply the noise on the provided samples.
    \item  Recall that $\Lambda\subseteq [-T,T]\times [-S,S]$. This means that the reconstruction method only draws on Gabor samples located in the strip $\R\times [-S,S]$ while neglecting information from samples outside the strip. Morally, if there is only little contribution from samples outside the strip (that is, if $\kappa_S(f)$ is small) then neglecting these samples does not affect the accuracy too much.
    \item The decisive factor in the third term is the exponential $e^{-\frac\pi2 (T-\tau)}$. Since we want to accurately reconstruct on the interval $(-\tau,\tau)$ we require that $T>\tau$; clearly, the reconstruction accuracy will benefit from being fed more information (i.e., increasing $T$).
    The estimate shows that the improvement on the corresponding error term is exponential with respect to the offset $T-\tau$.
\end{itemize}
\subsection{Reconstruction from spectrogram samples}
The content of the main result is to clarify under which conditions and in which sense $f_*$, the output of Algorithm \ref{alg1} is a useful estimate for $f$. 

\begin{theorem}\label{thm:main3}
    Let $T,S\in\mathfrak{a}\N$ and let $0<\tau<T$. Furthermore, suppose the assumptions of Theorem \ref{thm:main}.
    Then it holds that $f_*$, the output of Algorithm \ref{alg1} satisfies that 
    \begin{multline}\label{eq:estmain3}
        \min_{\theta\in\R} \|f_*-e^{i\theta}f\|_{L^2(-\tau,\tau)} \\
        \le 18 \Bigg[ 177 
    e^{0.84 r^2} \cdot \left(1+20 r \sqrt{\frac{\|\G f\|_{\ell^2(\Lambda)}^2}{\lambda_2(f,\Lambda,r)}} \right)\cdot\sqrt[4]\varepsilon\\
    + (2T+2) \kappa_S(f)
    + 2\sqrt{S}(\tau+1)e^{-\frac\pi2(T-\tau)}
        \Bigg].
    \end{multline}
\end{theorem}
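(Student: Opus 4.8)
The plan is to chain the two previous main results. Observe first that the output of Algorithm \ref{alg1} is exactly $f_*=\mathcal{R}_\Lambda(c)$ with $c\coloneqq\sqrt{\trace(Y)}\,v\in\C^\Lambda$, where $\mathcal{R}_\Lambda$ is the linear operator of \eqref{eq:recdualwindow}. Theorem \ref{thm:main} controls how well $c$ approximates $(\G f(\lambda))_{\lambda\in\Lambda}$ up to a global phase, while Theorem \ref{thm:main2} controls the reconstruction error of $\mathcal{R}_\Lambda$ on $(-\tau,\tau)$ in terms of the $\ell^2(\Lambda)$-error of its argument together with the truncation terms governed by $\kappa_S(f)$ and $\eta(f)$. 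So the proof consists of: aligning the global phase, composing the two estimates, and then absorbing $\eta(f)$ into the remaining terms.

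Concretely, by Theorem \ref{thm:main} (whose hypotheses are in force) both convex programs are feasible and there is $\theta_0\in\R$ with
\begin{multline*}
\|c-e^{i\theta_0}(\G f(\lambda))_{\lambda\in\Lambda}\|_{\ell^2(\Lambda)}\le E_1,\\
\text{where}\quad E_1\coloneqq 177\,e^{0.84 r^2}\Big(1+20r\sqrt{\|\G f\|_{\ell^2(\Lambda)}^2/\lambda_2(f,\Lambda,r)}\Big)\sqrt[4]{\varepsilon},
\end{multline*}
equivalently $\|\G f-e^{-i\theta_0}c\|_{\ell^2(\Lambda)}\le E_1$. Since $\mathcal{R}_\Lambda$ is linear, $\mathcal{R}_\Lambda(e^{-i\theta_0}c)=e^{-i\theta_0}f_*$, and applying Theorem \ref{thm:main2} with $e^{-i\theta_0}c$ in place of $c$ yields
\begin{multline*}
\min_{\theta}\|f_*-e^{i\theta}f\|_{L^2(-\tau,\tau)}\le\|f-e^{-i\theta_0}f_*\|_{L^2(-\tau,\tau)}\\
\le 6.82\Big(E_1+\sqrt{T+1}\,\kappa_S(f)+\sqrt{\tau+1}\,e^{-\frac{\pi}{\sqrt2}(T-\tau)}\eta(f)\Big),
\end{multline*}
where we used that multiplying the function inside by the unimodular $e^{i\theta_0}$ leaves the $L^2(-\tau,\tau)$-norm unchanged.

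Since $\eta(f)$ does not appear in \eqref{eq:estmain3}, it must be absorbed, and this is the only step requiring an idea rather than bookkeeping. I would prove the auxiliary bound $\eta(f)^2\le 2^{1/4}\bigl(2S+\kappa_S(f)^2\bigr)$. Starting from the Gaussian semigroup identity $e^{-\pi(t-x)^2}=2\int_\R e^{-2\pi(t-y)^2}e^{-2\pi(y-x)^2}\,\mathrm{d}y$ and Fubini,
\[
\int_\R|f(t)|^2 e^{-\pi(t-x)^2}\,\mathrm{d}t=2\int_\R e^{-2\pi(y-x)^2}\Big(\int_\R|f(t)|^2 e^{-2\pi(t-y)^2}\,\mathrm{d}t\Big)\,\mathrm{d}y;
\]
by Plancherel the inner integral equals $2^{-1/2}\|\G f(y,\cdot)\|_{L^2(\R)}^2$, which, splitting the frequency integral at $\pm S$ and using $\|\spect f\|_{L^\infty}\le 1$, is at most $2^{-1/2}\bigl(2S+\kappa_S(f)^2\bigr)$ uniformly in $y$; carrying out the remaining Gaussian integral $\int_\R e^{-2\pi(y-x)^2}\,\mathrm{d}y=2^{-1/2}$ gives $\int_\R|f(t)|^2 e^{-\pi(t-x)^2}\,\mathrm{d}t\le 2S+\kappa_S(f)^2$ for every $x$, and multiplying by $2^{1/4}$ gives the claim; hence $\eta(f)\le 2^{1/8}\bigl(\sqrt{2S}+\kappa_S(f)\bigr)$.

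Finally I would substitute this into the three-term estimate and collect constants. Using $T,S\in\mathfrak{a}\N$ (so $T,S\ge\mathfrak{a}$), $0<\tau<T$, the inequality $\sqrt{x}\le x$ for $x\ge 1$, and $e^{-\frac{\pi}{\sqrt2}(T-\tau)}\le e^{-\frac{\pi}{2}(T-\tau)}\le 1$, the $\eta$-term splits into a multiple of $(\tau+1)\,\sqrt S\,e^{-\frac{\pi}{2}(T-\tau)}$ and a multiple of $(T+1)\,\kappa_S(f)$; adding the latter to the $6.82\sqrt{T+1}\,\kappa_S(f)$ term and bounding $\sqrt{T+1}\le T+1$, one checks that $6.82$ times the accumulated numerical factors stays below $18$ in front of each of the quantities $E_1$, $(2T+2)\kappa_S(f)$ and $2\sqrt S(\tau+1)e^{-\frac{\pi}{2}(T-\tau)}$, which is precisely \eqref{eq:estmain3}. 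The feasibility claim is inherited verbatim from Theorem \ref{thm:main}. The main (indeed only) obstacle is the $\eta(f)$-estimate of the previous paragraph; everything else is linearity of $\mathcal{R}_\Lambda$, phase alignment and elementary constant-chasing.
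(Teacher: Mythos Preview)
Your proposal is correct and follows essentially the same route as the paper's own proof: chain Theorem \ref{thm:main} with Theorem \ref{thm:main2} after phase-aligning via the linearity of $\mathcal{R}_\Lambda$, then eliminate $\eta(f)$ by the Gaussian semigroup identity combined with Plancherel and the splitting $\|\G f(y,\cdot)\|_{L^2}^2\le 2S+\kappa_S(f)^2$, and finish with elementary constant bookkeeping (in particular $\sqrt{\tau+1}\le \tau+1$, $e^{-\pi(T-\tau)/\sqrt2}\le e^{-\pi(T-\tau)/2}$, and $\sqrt{T+1}+(\tau+1)e^{-\pi(T-\tau)/2}\le 2T+2$). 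The paper does exactly this, with the same auxiliary bound $\eta(f)\le 2\sqrt S+\kappa_S(f)$.
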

The result is essentially a direct consequence of Theorem \ref{thm:main} and Theorem \ref{thm:main2} and the proof will be provided right away at this stage.
\begin{proof}
    Let $c\in\C^\Lambda$ be defined by 
    $$
    c_\lambda\coloneqq \sqrt{\trace (Y)} v_\lambda,\quad \lambda\in\Lambda.
    $$
    As per Theorem \ref{thm:main} there exists $\theta_0\in\R$ such that 
    \begin{equation}\label{eq:asstthm1rewritten}
        |c-e^{i\theta_0}(\G f(\lambda))_{\lambda\in\Lambda}| \le 177 
    e^{0.84 r^2} \cdot \left(1+20 r \sqrt{\frac{\|\G f\|_{\ell^2(\Lambda)}^2}{\lambda_2(f,\Lambda,r)}} \right)\cdot\sqrt[4]\varepsilon.
    \end{equation}
    We continue by estimating
    \begin{align*}
        \min_{\theta\in\R}\|f_*-e^{i\theta}f\|_L^2(-\tau,\tau) &= \min_{\theta\in\R}\|\mathcal{R}_\Lambda(c)-e^{i\theta}f\|_{L^2(-\tau,\tau)}\\
        &\le \|\mathcal{R}_\Lambda(c)-e^{i\theta_0}f\|_{L^2(-\tau,\tau)}\\
        &= \|\mathcal{R}_\Lambda(e^{-i\theta_0}c)-f\|_{L^2(-\tau,\tau)}.
    \end{align*}
    As per Theorem \ref{thm:main2} we have that the right hand side is bounded from above by 
    \begin{equation}
        18
        \left(
    \|\G f- e^{-i\theta_0}c\|_{\ell^2(\Lambda)} +\sqrt{T+1} \cdot \kappa_S(f) + 
    (\tau+1) e^{-\frac\pi2 (T-\tau)}\cdot \eta(f)
    \right).
    \end{equation}
    
    We claim that 
    \begin{equation}\label{eq:boundeta}
        \eta(f) \le 2 \sqrt{S}+ \kappa_S(f).
    \end{equation}
    With this and with \eqref{eq:asstthm1rewritten} we then get that 
    \begin{multline}
        \min_{\theta\in\R} \|f_*-e^{i\theta}f\|_{L^2(-\tau,\tau)}
        \le 18 \Bigg[
        177 
    e^{0.84 r^2} \cdot \left(1+20 r \sqrt{\frac{\|\G f\|_{\ell^2(\Lambda)}^2}{\lambda_2(f,\Lambda,r)}} \right)\cdot\sqrt[4]\varepsilon\\
    + (\sqrt{T+1} +(\tau+1)e^{-\frac\pi2(T-\tau)})\cdot \kappa_S(f)
    + 2\sqrt{S}(\tau+1)e^{-\frac\pi2(T-\tau)}
        \Bigg]
    \end{multline}
    Notice that 
    $$
    \sqrt{T+1}+(\tau+1)e^{-\frac\pi2 (T-\tau)}\le 2T+2,
    $$
    which implies the desired estimate \eqref{eq:estmain3}.\\
    
    To establish \eqref{eq:boundeta}
    let $v\in\R$ arbitrary.
    Since $\G f(v,\cdot)=\ft\{f e^{-\pi(\cdot-v)^2}\}$ it follows from Plancherel's formula that 
    \begin{equation}\label{eq:estfcon}
    \|f e^{-\pi(\cdot-v)^2}\|_{L^2}^2 = \|\G f(x,\cdot)\|_{L^2}^2
    \le 2S + \kappa_S(f)^2,
    \end{equation}
    where we used that  $\|\G f\|_{L^\infty}\le 1$ by assumption. Since
    $$
    \int_\R e^{-2\pi (t-u)^2} e^{-2\pi u^2}\,\mbox{d}u = \frac12 e^{-\pi t^2},
    $$
    we then get for any $x\in\R$ (by applying \eqref{eq:estfcon} to $v=x+u$) that
    \begin{align*}
        \int_\R |f(t)|^2 e^{-\pi(t-x)^2}\,\mbox{d}t 
        &= 2 \int_\R |f(t)|^2 \left( \int_\R e^{-2\pi(t-x-u)^2-2\pi u^2}\,\mbox{d}u \right) \,\mbox{d}t\\
        &= 2 \int_\R \left(\int_\R |f(t)|^2 e^{-2\pi(t-x-u)^2}\,\mbox{d}t \right) e^{-2\pi u^2}\,\mbox{d}u\\
        &\le (2S+\kappa_S(f)^2) \cdot 2 \int_\R e^{-2\pi u^2}\,\mbox{d}u\\
        &= \sqrt{2} (2S+\kappa_S(f)^2)
    \end{align*}
    As $x$ was arbitrary, taking square roots implies \eqref{eq:boundeta}, and we are done.
\end{proof}

\section{Numerical simulations}\label{sec:numericalexp}
First of all, the purpose of the present section is to demonstrate empirically the aptitude of the proposed reconstruction algorithm. 
Furthermore, we aim to develop some sense for the choice of parameters of the scheme.

\smallskip

    We consider the reconstruction of certain random signals.
    The precise random model is the following: 
    Given $a>0$ and $\Xi \subseteq a\Z^2$ finite, 
    we pick a vector $(c_\lambda)_{\lambda\in\Xi} \in \C^\Xi$ at random, with each component chosen independently and according to the uniform distribution on the complex unit disk $\mathbb{D}\subseteq \C$. The resulting function is then given by    
    $$
    f = \sum_{\lambda \in\Xi} c_\lambda \cdot \pi(\lambda)\varphi.
    $$ 
    The algorithm takes noisy spectrogram measures as input.
    Given a noise level $\nu\ge 0$, the noisy sampled data fed into the algorithm are given by 
    $$
    \sigma_\lambda = \spect f(\lambda) + \eta_\lambda,\quad \lambda\in\Omega,
    $$
    where $(\eta_\lambda)_{\lambda\in\Omega}$ are independently and identically distributed random variables according to uniform distribution on the interval $[-\nu,\nu]$.
    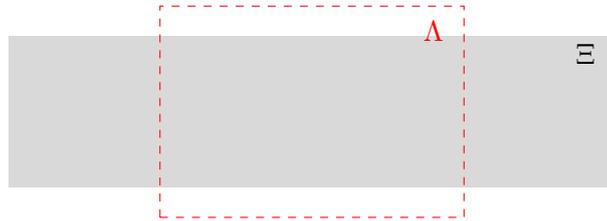
\begin{figure}[h!]
     \begin{tikzpicture}
\path[fill = gray!30] (-1,0) rectangle (7,2);
\node[above] at (6.6,1.5) {$\Xi $};
\draw[dashed, color = red] (1,-0.4) rectangle (5,2.4);
\node[above, color = red] at (4.6, 1.8) {$\Lambda$};
\end{tikzpicture}
\caption{In order to make sure that the reconstructed coefficients $(\G f(\lambda)_{\lambda\in\Lambda}$ carry all the relevant information of $f$ in an interval around zero, $\Xi$ is chosen in such a way that in the vertical direction it does not exceed the zone occupied by $\Lambda$. This means if $\Xi =a\Z^2\cap ([-T',T']\times [-S',S'])$ and $\Lambda = \mathfrak{a}\Z^2\cap ([-T,T]\times[-S,S])$, then $S\ge S'$.
}
    \end{figure}

\begin{figure}
\begin{subfigure}{0.48\textwidth}
\includegraphics[width=\textwidth]{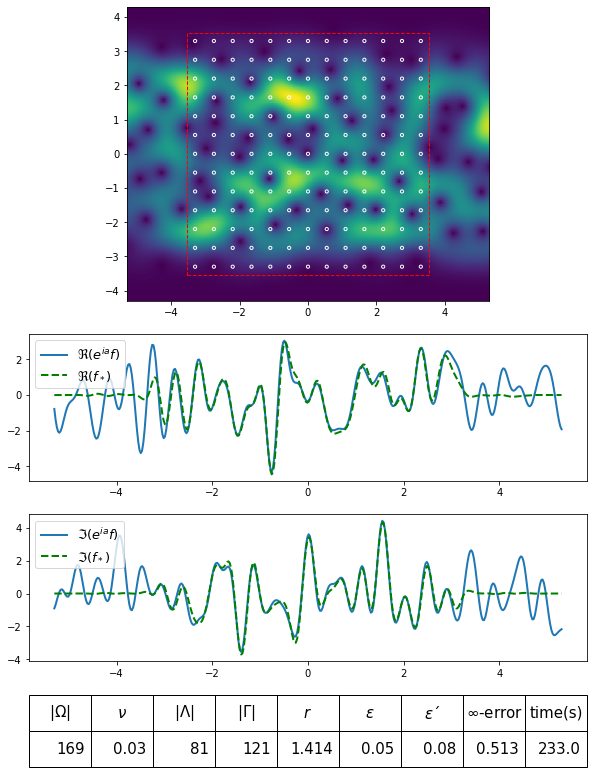}
\end{subfigure}
\hfill
\begin{subfigure}{0.48\textwidth}
\includegraphics[width=\textwidth]{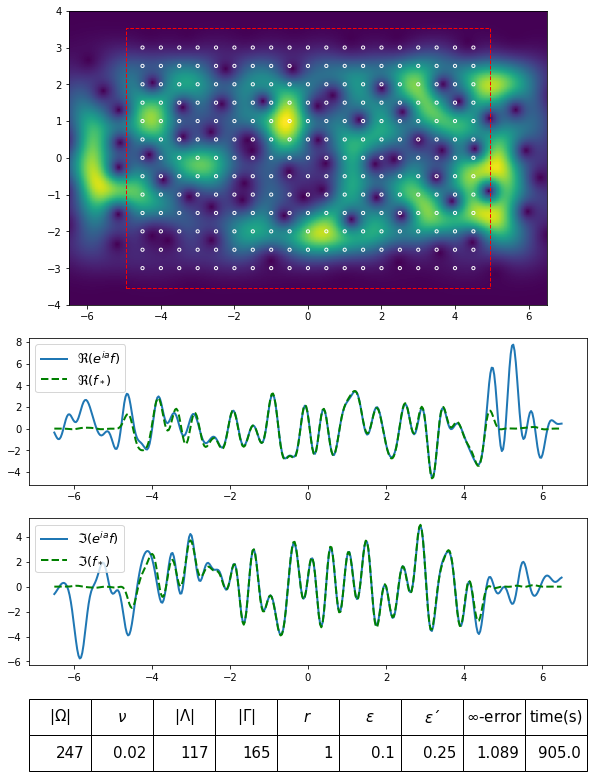}
\end{subfigure}
\caption{ The top plot shows $|\G f|$ for a randomly generated $f$.
The white points indicate the set $\Omega$, i.e., the position of the samples fed into the scheme.
The red rectangle indicates the zone in which samples of $\G f$ will be reconstructed:  all points of $\mathfrak{a}\Z^2\setminus \Lambda$ are on the red dashed line or outside of the rectangle.
The two plots below compare real and imaginary part of $f$ and its reconstruction $f_*$, respectively.
In order to make these two functions comparable $f$ is multiplied by an appropriate constant phase factor first. 
The table at the bottom summarizes a couple of relevant specifications of the experiment carried out. 
}\label{fig:randomfctsrec}
\end{figure}

The main difference between the two experiments (Figure \ref{fig:randomfctsrec} is the interval on which $f$ is reconstructed. 
On the left, reconstruction is fairly accurate on $[-3,3]$, while on the right hand side, we obtain a good approximation for the larger interval $[-4.5, 4.5]$.
This comes at the price of having to solve a problem of higher complexity. The bottleneck of the reconstruction scheme lies in identifying $A_*$, the parametrizing matrix of the (approximate) entire extension of the spectrogram $\spect f$ (step $1$ of Algorithm \ref{alg1}).
In this step, one needs to solve a SDP where the variable is a matrix of dimension $|\Gamma| \times |\Gamma|$.
Hence, a moderate enlargement of the set $\Lambda$ results in substantial extra computational expenses. Comparing the two present examples we register roughly a quadruplication of computation time.

\begin{remark}[Infeasibility issues]
The experimental results displayed in this section are very much a product of a trial and error process regarding the selection of suitable parameters.
For several instances the algorithm would terminate early as either of the convex problems handed to the solver is found to be infeasible. From a practical viewpoint it would be highly desirable to have a variant of the scheme which is robust w.r.t. such infeasibility issues.
For example, a viable pathway could be to incorporate the constraints into the objective function instead of using hard constraints.
As the focal point of this paper lies on the numerical analysis of the scheme we do not aim to further expand on this topic at this stage.
\end{remark}
\section{Terminology and Prerequisites}\label{sec:prelims}
\subsection{General notation}
Throughout we will understand $\R^d$ as a subspace of $\C^d$.
Given a vector $p\in\C^d$ its euclidean length is denoted by $|p|$ and we use the notation $p^2=p\cdot p = \sum_{k=1}^d p_k^2$. Real and imaginary parts are taken componentwise, i.e., 
$$
\Re(p)=(\Re(p_1),\ldots,\Re(p_d))^T,\quad\Im(p)=(\Im(p_1),\ldots,\Im(p_d))^T.
$$
Given a function $F:\R^d\to \C$, we use the tensor notation
$$\Tcal_u[F](p) = F(p+u) \overline{F(p)},\quad p,u\in\R^d.$$

\smallskip
Given a matrix $A$ we denote
$$
\|A\|_{\max} = \max_{k,\ell} |A_{k,\ell}|.
$$
If $\Lambda$ is a finite index set with $N=|\Lambda|$ its cardinality, we can identify 
$\C^{\Lambda\times\Lambda}$ with $\C^{N\times N}$, the vector space of $N\times N$ square matrices.
Linear algebra concepts such as matrix products, matrix-vector multiplication, trace, positive (semi)-definiteness, eigenvalues and eigenvectors are well-defined in $\C^{\Lambda\times\Lambda}$ and will be denoted by the common notation.
Moreover, we introduce the Frobenius inner product on $\C^{\Lambda\times\Lambda}$ by 
$$
\langle X,Y\rangle_F =\sum_{\lambda,\lambda'\in\Lambda} X_{\lambda,\lambda'} \overline{Y_{\lambda,\lambda'}} = \trace (Y^H X).
$$
Recall that $\langle X, Y \rangle_F$ is real-valued (resp. non-negative) if $X,Y$ are hermitian (resp. positive definite).
Given vectors $a,b\in\C^\Lambda$, we use tensor notation $a\otimes b$ to denote the outer vector product, that is,
$$
(a \otimes b)_{\lambda,{\lambda'}} = a_\lambda b_{\lambda'}, \quad \lambda,\lambda'\in\Lambda.
$$
The following simple lemma will be handy:

\begin{lemma}\label{lem:matrixcomplsdp}
    Let $k,\ell\in \{1,\ldots,N\}$.
    and let $e_k, e_\ell$ denote the $k$-th and $\ell$-th canonical basis vector in $\C^N$.
    Then, for every $X\in\C^{N\times N}$ hermitian, 
    $$
    \Re(X_{k,\ell}) = \langle X, \frac12 (e_k\otimes e_\ell + e_\ell \otimes e_k)\rangle_F,
    $$
    and
    $$\Im (X_{k,\ell}) = \langle X, \frac1{2i}(e_\ell \otimes e_k - e_k\otimes e_\ell)\rangle_F.
    $$
\end{lemma}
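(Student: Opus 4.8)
The statement is a direct computation, so the plan is simply to expand both Frobenius inner products using the definition $\langle X, Y\rangle_F = \sum_{i,j} X_{i,j}\overline{Y_{i,j}}$ and the fact that $(e_a\otimes e_b)_{i,j} = (e_a)_i (e_b)_j = \delta_{i,a}\delta_{j,b}$.

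First I would handle the real part. Since $(e_k\otimes e_\ell)_{i,j} = \delta_{i,k}\delta_{j,\ell}$, the matrix $\frac12(e_k\otimes e_\ell + e_\ell\otimes e_k)$ has exactly two nonzero entries (or one, if $k=\ell$): value $\frac12$ at position $(k,\ell)$ and value $\frac12$ at position $(\ell,k)$. Both entries are real, so complex conjugation in the Frobenius product is harmless, and
\[
\Big\langle X, \tfrac12(e_k\otimes e_\ell + e_\ell\otimes e_k)\Big\rangle_F = \tfrac12 X_{k,\ell} + \tfrac12 X_{\ell,k}.
\]
Now invoking that $X$ is hermitian, so $X_{\ell,k} = \overline{X_{k,\ell}}$, this becomes $\tfrac12(X_{k,\ell} + \overline{X_{k,\ell}}) = \Re(X_{k,\ell})$, as claimed.

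For the imaginary part the argument is the same modulo keeping track of the factor $\frac1{2i}$. The matrix $\frac1{2i}(e_\ell\otimes e_k - e_k\otimes e_\ell)$ has entry $\frac1{2i}$ at position $(\ell,k)$ and entry $-\frac1{2i}$ at position $(k,\ell)$; its complex conjugate therefore has entry $-\frac1{2i}$ at $(\ell,k)$ and $\frac1{2i}$ at $(k,\ell)$ (note $\overline{1/(2i)} = -1/(2i)$). Pairing against $X$ gives
\[
\Big\langle X, \tfrac1{2i}(e_\ell\otimes e_k - e_k\otimes e_\ell)\Big\rangle_F = -\tfrac1{2i} X_{\ell,k} + \tfrac1{2i} X_{k,\ell} = \tfrac1{2i}\big(X_{k,\ell} - \overline{X_{k,\ell}}\big) = \Im(X_{k,\ell}),
\]
again using hermiticity of $X$ in the middle step. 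This completes the proof.

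There is essentially no obstacle here: the only subtlety worth a sentence is being careful with the conjugation of $\frac1{2i}$ inside the Frobenius product (which flips its sign and is why the antisymmetric combination $e_\ell\otimes e_k - e_k\otimes e_\ell$ is the right one to pick), and noting that the case $k=\ell$ is covered automatically since then both displayed matrices reduce appropriately and $X_{k,k}$ is real for hermitian $X$. I would present the two computations in two short displayed lines and be done.
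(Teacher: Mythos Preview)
Your proof is correct and follows essentially the same direct computation as the paper's; the only cosmetic difference is that the paper starts from $\Re(X_{k,\ell})=\tfrac12(X_{k,\ell}+X_{\ell,k})$ and rewrites each entry as $e_k^T X e_\ell=\trace([e_\ell\otimes e_k]X)$, whereas you expand the Frobenius product first and then invoke hermiticity.
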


\begin{proof}
    Using that $X$ is hermitian implies that 
    \begin{multline}
        \Re(X_{k,\ell}) = \frac12 (X_{k,\ell} + \overline{X_{k,\ell}}) = \frac12 ( X_{k,\ell} + X_{\ell,k}) 
        = \frac12 (e_k^T X e_\ell + e_\ell^T X e_k) \\
        = \frac{1}{2}\bigl( \trace([e_\ell \otimes e_k] X) + \trace ([e_k \otimes e_\ell]X)\bigr)
    \end{multline}
    The second identity is similar.
\end{proof}
\smallskip

For $F\in\mathcal{O}(\C)$ an entire function we will employ the notation $F^*$ to denote the function defined by
$F^*(z)= \overline{F(\bar{z})}$. Note that $F^*\in \mathcal{O}(\C)$.
Given $G\in\mathcal{O}(\C^d)$, we define the maximum modulus function by
$$
M_G(r) := \sup_{\substack{z\in\C^d\\ |\Im z|=r}} |G(z)|, \quad r\ge 0.
$$

The Fourier transform of $f\in L^1(\R^d)$ is defined via the integral
$$
\ft f(\omega) = \hat{f}(\omega) = \int\limits_{\R^d} f(t) e^{-2\pi i \omega\cdot t}\,\mbox{d}t,
$$
and extended to $L^2(\R^d)$ in the usual way.
We denote the shift and the modulation operator by $T$ and $M$, respectively. That is
$$
T_x f(t)= f(t-x), \quad M_\omega f(t) = e^{2\pi i \omega\cdot t}f(t), \quad f\in L^2(\R^d).
$$
Time-frequency shifts are denoted by $\pi(z)f=M_\omega T_x f$, where $z=(x,\omega)$.

\smallskip
The Jacobi $\vartheta_3$ function which is defined by 
$$
\vartheta_3(z,q) = \sum_{k\in\Z} q^{k^2}e^{2ikz}, \quad q\in (0,1), \, z\in\C
$$
will be useful on a number of occasions. 
All values of this function used here have been computed using
the {\tt  EllipticTheta[3,z,q]} function on {\tt www.wolframalpha.com}, in particular, we will use
$$
\vartheta_3(0,e^{-\pi})
=1.08643...\quad,\quad
\vartheta_3(0,e^{-\pi/2})=1.41949...
$$
and $\vartheta_3(0,e^{-\pi/4})^2=4.00005...$
One application is estimating the sum of equidistant shifts of Gaussians.

\begin{lemma}\label{lem:sumgaussshifts}
    Let $b>0$. For all $t\in\R$ it holds that 
    $$
    \sum_{k\in \Z} e^{-\frac\pi{b}(t-k)^2} \le \sqrt{b} \vartheta_3(0,e^{-b\pi}).
    $$
\end{lemma}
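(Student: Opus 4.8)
The plan is to invoke the Poisson summation formula. First I would fix $b>0$ and set $g(x)=e^{-\pi x^2/b}$, which is a Schwartz function; with the Fourier normalisation used in the paper, namely $\widehat{f}(\xi)=\int_\R f(t)e^{-2\pi i\xi t}\,\mathrm{d}t$, a standard Gaussian computation gives $\widehat{g}(\xi)=\sqrt{b}\,e^{-\pi b\xi^2}$. Since $g$ is Schwartz, both sides of the Poisson summation formula converge absolutely and uniformly, and using that $g$ is even it yields
$$\sum_{k\in\Z}e^{-\frac{\pi}{b}(t-k)^2}=\sum_{k\in\Z}g(t-k)=\sum_{n\in\Z}\widehat{g}(n)\,e^{-2\pi int}=\sqrt{b}\sum_{n\in\Z}e^{-\pi bn^2}\,e^{-2\pi int}.$$

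Next I would bound the right-hand side in modulus: each factor $|e^{-2\pi int}|$ equals $1$, so
$$\sum_{k\in\Z}e^{-\frac{\pi}{b}(t-k)^2}\le\sqrt{b}\sum_{n\in\Z}e^{-\pi bn^2}=\sqrt{b}\,\vartheta_3(0,e^{-b\pi}),$$
where the last step is simply the definition $\vartheta_3(0,q)=\sum_{k\in\Z}q^{k^2}$ evaluated at $q=e^{-b\pi}$. Since this bound does not depend on $t$, it proves the claim for every $t\in\R$.

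There is really no hard step here; the lemma is an immediate consequence of Poisson summation. The only points that need a little care are getting the normalising constant $\sqrt{b}$ in $\widehat{g}$ right and justifying the termwise application of Poisson summation, both routine because $g$ is a dilated Gaussian and hence Schwartz with absolutely convergent series on both sides. If one wished to avoid Fourier analysis altogether, an alternative would be to argue directly that the $1$-periodic function $t\mapsto\sum_k e^{-\frac{\pi}{b}(t-k)^2}$ is maximised at $t=0$ (via a convexity/rearrangement argument) and then identify $\sum_k e^{-\frac{\pi}{b}k^2}$ with $\sqrt{b}\,\vartheta_3(0,e^{-b\pi})$ through the theta transformation formula; but the Poisson route above is shorter and produces the theta value directly.
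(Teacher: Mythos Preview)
Your proof is correct and follows essentially the same approach as the paper: apply Poisson summation to the Gaussian $g(x)=e^{-\pi x^2/b}$, identify $\widehat g(\xi)=\sqrt{b}\,e^{-\pi b\xi^2}$, and then bound the resulting Fourier series by the triangle inequality to obtain $\sqrt{b}\,\vartheta_3(0,e^{-b\pi})$.
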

\begin{proof}
    We denote $g(x) = e^{-\frac{\pi}{b} x^2}$ and note that $\hat{g}(\omega) = \sqrt{b} e^{-b\pi\omega^2}$.
    Applying Poisson summation formula allows us to rewrite
$$
\sum_{k\in\Z} e^{-\frac\pi{b}(t-k)^2} = \sum_{k\in\Z} T_t g(k) 
= \sum_{k\in\Z} M_{-t} \widehat{g}(k) 
= \sqrt{b} \sum_{k\in\Z} e^{-2\pi i tk } e^{-b\pi k^2}.
$$
The triangle inequality then gives
$$
\sum_{k\in\Z} e^{-\frac\pi{b}(t-k)^2}
        \le \sqrt{b}\sum_{k\in\Z} e^{-b\pi k^2} =  \sqrt{b} \vartheta_3(0, e^{-b\pi}),
$$
as claimed.
\end{proof}

\subsection{Time frequency analysis}
The central object of this paper is the following.
\begin{definition}[Short-time Fourier transform]
The \emph{short-time Fourier transform}(STFT) of $f\in L^2(\R^d)$ with window function $g\in L^2(\R^d)$
is defined by
$$
\V_g f(x,\omega) = \langle f, M_\omega T_x g\rangle_{L^2(\R^d)}.
$$
Moreover, if 
$$
g=\varphi:= 2^{d/4}e^{-\pi \cdot^2},
$$
the Gaussian, we use the notation $\G f = \V_\varphi f$ and call $\G f$ the Gabor transform.
The squared modulus of the Gabor transform is called the \emph{spectrogram} and will be denoted by $\spect f$, that is, $\spect f(x,\omega)=|\G f(x,\omega)|^2$.
\end{definition}
We devote the remainder of this section to collect a couple of properties and statements involving the STFT and refer to Gröchenig's textbook \cite{gro:foundtfanalysis} for more details.\\
Note that, from Cauchy-Schwarz, we get $\|\spect f\|_{L^\infty}\leq\|f\|_{L^2}^2$.
Given $\lambda=(a,b)$,
$$
\G [\pi(\lambda)f](x,\omega) =e^{-2\pi i a (\omega-b)} \cdot  \G f (x-a,\omega-b),
$$
in particular, $|\G [\pi(\lambda)f](p)| = |\G f(p-\lambda)|$.
The Gabor transform of the Gaussian $\varphi $ is given by 
\begin{equation}\label{eq:Gphi}
\G\varphi(x,\omega) = e^{-\frac\pi2 (x^2+\omega^2+2ix\omega)}
\end{equation}
so that, for $z=(x,\omega)$,
\begin{equation}\label{eq:Gphitrans}
\G [\pi(\lambda)\varphi](x,\omega)=
e^{-\pi i(x+a)(\omega-b)} e^{-\frac\pi2 |z-\lambda|^2}.
\end{equation}

Later on, we will require the formula
\begin{equation}\label{eq:ftofprodstft}
\ft \left(\V_g f \overline{\V_\gamma h}\right) (s,t) = \V_h f(-t,s) \overline{\V_\gamma g(-t,s)}, \quad f,g,h,\gamma\in L^2(\R^d).
\end{equation}
which may be found in \cite{jamingcras} and in \cite{groech:mystery}.

The Bargmann transform denoted by $\mathcal{B}$ is defined as 
$$
\mathcal{B}f(z) = 2^{1/4} \int_\R f(t) e^{2\pi t z-\pi t^2- \frac{\pi}2 z^2}\,\mbox{d}t.
$$
It is an isometry from $L^2(\R)$ to the Fock space of entire functions,
$$
\mathcal{F}^2=\{f\,:\C\to\C\mbox{ entire s.t. }\|f\|_{\ft^2}^2:=\int_{\C}|f(z)|^2e^{-\pi|z|^2}\,\mbox{d}z<+\infty\}.
$$
Bargmann and Gabor transforms are intimitely related by virtue of the identity 
\begin{equation}\label{eq:relgaborbargmann}
\G f(x,-y) = e^{\pi i xy} \mathcal{B}f(z) e^{-\frac\pi2 |z|^2}, \quad z=x+iy\in\C.
\end{equation}

The Hermite functions are defined by 
$$
h_k(t) = c_k (-1)^k e^{\pi t^2} \left(\frac{\mbox{d}}{\mbox{d}t} \right)^k \left(e^{-2\pi t^2}\right), \quad k\in\N,
$$
where $c_k>0$ is chosen in such a way that $\|h_k\|_{L^2}=1$. 
Then, $(h_k)_{k\in\N}$ forms an orthonormal basis of $L^2(\R)$.
The Bargmann transform maps the Hermite functions to monomials of the corresponding degree, that is, 
\begin{equation}
    \label{eq:bargmanpol}
\B h_k(z) = \sqrt{\frac{\pi^k}{k!}} z^k, \quad k\in\N.
\end{equation}
Equation \eqref{eq:ftofprodstft} implies that Fourier transform of the spectrogram has Gaussian decay.
The following lemma reveals that this is still the case after applying a Gaussian cut-off to the spectrogram.
\begin{lemma}\label{lem:fourierdecayDexp}
    Let $f\in L^2(\R)$, then it holds for all $\tau\in\R^2$ that
    \begin{equation}\label{eq:estspectgexp}
    \big| \ft\big( \spect f \cdot e^{-\frac{\pi}{8}|\cdot-\tau|^2}\big)(\xi)\big| \le 8 \|\spect f\|_{L^\infty(\R^2)} \cdot e^{-\frac{38\pi}{81} \xi^2}, \quad \xi\in\R^2.
        \end{equation}
\end{lemma}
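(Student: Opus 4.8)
The plan is to reduce the claimed bound to the two facts we already have: the identity \eqref{eq:ftofprodstft}, which tells us that $\ft(\spect f)=\ft(|\G f|^2)$ itself has explicit Gaussian decay, and elementary Gaussian convolution algebra. Recall that $\spect f=\V_\varphi f\,\overline{\V_\varphi f}$, so applying \eqref{eq:ftofprodstft} with $g=\gamma=h=\varphi$ gives $\ft(\spect f)(s,t)=\V_\varphi f(-t,s)\overline{\V_\varphi f(-t,s)}=\spect f(-t,s)$; in other words $\ft(\spect f)$ is a rotate of $\spect f$, hence $|\ft(\spect f)(\xi)|\le\|\spect f\|_{L^\infty}$ for all $\xi$. (More precisely, by \eqref{eq:Gphi}–\eqref{eq:Gphitrans} and the structure of $\G f$ as a Bargmann-type object, $|\ft(\spect f)|$ has true Gaussian decay $e^{-\frac\pi2|\xi|^2}$; but even the crude uniform bound suffices once the cut-off is in place, as I explain below.)

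Next I would write the Gaussian cut-off as a Fourier-domain convolution. Since $\ft\big(e^{-\frac\pi8|\cdot-\tau|^2}\big)(\xi)= (8)\, e^{-2\pi i \tau\cdot\xi}\,e^{-8\pi|\xi|^2}$ (the normalization $\ft(e^{-\pi a|\cdot|^2})(\xi)=a^{-1}e^{-\pi|\xi|^2/a}$ in dimension $2$ gives the constant $8=(1/8)^{-1}$), we obtain
\begin{equation*}
\ft\big(\spect f\cdot e^{-\frac\pi8|\cdot-\tau|^2}\big)(\xi)
= 8\int_{\R^2}\ft(\spect f)(\xi-\zeta)\, e^{-2\pi i \tau\cdot\zeta}\, e^{-8\pi|\zeta|^2}\,\mbox{d}\zeta.
\end{equation*}
Now I would insert the pointwise bound on $\ft(\spect f)$. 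Using the genuine Gaussian decay $|\ft(\spect f)(\eta)|\le\|\spect f\|_{L^\infty}e^{-\frac\pi2|\eta|^2}$ one is left with the scalar integral $\int_{\R^2}e^{-\frac\pi2|\xi-\zeta|^2}e^{-8\pi|\zeta|^2}\,\mbox{d}\zeta$, a product of two Gaussians whose convolution is again Gaussian: completing the square in $\zeta$ produces a factor $e^{-c|\xi|^2}$ with $c$ the harmonic-type combination of the two widths, namely $c=\pi\cdot\frac{(1/2)\cdot 8}{(1/2)+8}=\frac{8\pi}{17}$, together with a finite constant from integrating out the remaining Gaussian in $\zeta$. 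Tracking the constants (the leading $8$, the $\ft$-normalization of the cut-off, and the $\zeta$-integral) and checking numerically that $\tfrac{8}{17}\ge\tfrac{38}{81}$ (indeed $8/17=0.4705\ldots>0.4691\ldots=38/81$) yields exactly \eqref{eq:estspectgexp}.

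The only real subtlety is pinning down the decay rate of $\ft(\spect f)$ precisely enough, and making sure the numerical constant $8$ on the right-hand side genuinely absorbs the convolution constants for \emph{every} $\tau$ — note the right side is $\tau$-independent, which is consistent because $\tau$ only contributes the unimodular factor $e^{-2\pi i\tau\cdot\zeta}$ inside the integral and thus drops out of the modulus estimate via the triangle inequality. I expect the main bookkeeping obstacle to be verifying that the completed-square Gaussian integral, after the loss incurred by moving the modulus inside the integral, still beats the target exponent $\frac{38\pi}{81}$ with room to spare for the constant; the slight gap between $8/17$ and $38/81$ is exactly the cushion the authors built in. Alternatively, if one only uses the uniform bound $|\ft(\spect f)|\le\|\spect f\|_{L^\infty}$, one is reduced to $\int e^{-8\pi|\zeta|^2+2\pi i(\xi-\tau)\cdot\zeta}$-type integrals which give decay rate $\frac{\pi}{32}|\xi|^2$ — too weak — so the genuine Gaussian decay of $\ft(\spect f)$ is essential and must be invoked, not circumvented.
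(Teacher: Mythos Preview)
Your convolution approach is natural and does produce the right Gaussian decay rate, but there is a genuine gap in the constant. First, your application of \eqref{eq:ftofprodstft} is misread: to obtain $\spect f=\V_\varphi f\,\overline{\V_\varphi f}$ on the left you must take $g=\gamma=\varphi$ and $h=f$, and then the formula gives
\[
\ft(\spect f)(s,t)=\V_f f(-t,s)\,\overline{\V_\varphi\varphi(-t,s)},
\]
which is \emph{not} a rotate of $\spect f$. The factor $|\V_\varphi\varphi|=e^{-\frac\pi2|\cdot|^2}$ supplies the Gaussian decay, but the other factor is the ambiguity function $\V_f f$, whose pointwise bound is $|\V_f f|\le\|f\|_{L^2}^2$, not $\|\spect f\|_{L^\infty}$. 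In fact $\ft(\spect f)(0)=\int_{\R^2}\spect f=\|\G f\|_{L^2(\R^2)}^2=\|f\|_{L^2}^2$, so the constant $\|f\|_{L^2}^2$ is sharp. Carrying out your Gaussian convolution correctly yields
\[
\big|\ft\big(\spect f\cdot e^{-\frac\pi8|\cdot-\tau|^2}\big)(\xi)\big|\le \tfrac{16}{17}\,\|f\|_{L^2}^2\, e^{-\frac{8\pi}{17}|\xi|^2},
\]
and since the ratio $\|f\|_{L^2}^2/\|\spect f\|_{L^\infty}$ can be made arbitrarily large (take $f$ a sum of $N$ well-separated time--frequency shifts of $\varphi$), you cannot recover the bound with $\|\spect f\|_{L^\infty}$ in front.

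The paper avoids this obstruction by a different route: it rewrites $\spect f(z)\,e^{-\frac\pi8|z|^2}=\spect\gamma(z/q)$ with $q=\tfrac{2\sqrt2}{3}$ and $\gamma=\sum_k q^k\langle f,h_k\rangle h_k$ a damped Hermite expansion, so that after applying \eqref{eq:ftofprodstft} the problematic factor becomes $\V_\gamma\gamma(q\xi')$ rather than $\V_f f$. The key point is the identity $|\G\gamma(z)|=|\G f(qz)|e^{-\frac\pi{18}|z|^2}$, which lets one bound $|\V_\gamma\gamma(p)|\le\|\spect f\|_{L^\infty}\int e^{-\frac\pi{18}(|z|^2+|z-p|^2)}\,\mathrm{d}z$ --- and this is how $\|\spect f\|_{L^\infty}$ (rather than $\|f\|_{L^2}^2$) legitimately enters the estimate. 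The slightly worse exponent $\tfrac{38}{81}<\tfrac{8}{17}$ is the price paid for this detour.
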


\begin{proof}
    Since we can always consider $\pi(-\tau)f$ instead of $f$ we may assume w.l.o.g. that $\tau=0$. First we write $f$
    in the Hermite basis $f=\sum_{k\geq 0}\langle f,h_k\rangle h_k$ so that
    \begin{eqnarray*}
        \spect f(z)&=&\left|\B f(z)e^{-\frac{\pi}{2}|z|^2}\right|^2=\left|\sum_{k\geq 0}\langle f,h_k\rangle \B h_k e^{-\frac{\pi}{2}|z|^2}\right|^2\\
    &=&\left|\sum_{k\geq 0}\langle f,h_k\rangle \sqrt{\frac{\pi^k}{k!}}z^ke^{-\frac{\pi}{2}|z|^2}\right|^2
    \end{eqnarray*}
    with \eqref{eq:bargmanpol}.
    With $q=\frac{2\sqrt2}3<1$, let us denote
    $ 
    \gamma = \sum_{k\ge 0} q^k \langle f,h_k\rangle h_k
    $
    and rewrite 
    \begin{align}\label{eq:spectfgamma}
        \spect f(z)  e^{-\frac\pi8 |z|^2} &= \left|\sum_{k=0}^\infty \langle f,h_k\rangle \sqrt{\frac{\pi^k}{k!}} z^k e^{-\frac{9\pi}{16} |z|^2} \right|^2\\
        &= \left|\sum_{k=0}^\infty \langle f,h_k\rangle \sqrt{\frac{\pi^k}{k!}} z^k e^{-\frac{\pi}2 |z/q|^2} \right|^2\\
        &= \left|\sum_{k=0}^\infty q^k \langle f,h_k\rangle \sqrt{\frac{\pi^k}{k!}} (z/q)^k e^{-\frac{\pi}2 |z/q|^2} \right|^2 = \spect \gamma(z/q).
    \end{align}
    Thus, we get with $\xi'=(-\xi_2,\xi_1)$ that the Fourier transform of the product is equal to
    \begin{equation}
       \ft\big( \spect f\cdot e^{-\frac\pi8 |\cdot|^2}\big) (\xi)
       = q^2 \widehat{\spect \gamma}(q\xi) = q^2 \cdot \V_\gamma \gamma(q\xi') \cdot \V_\varphi\varphi(q\xi'),
    \end{equation}
    where we made use of \eqref{eq:ftofprodstft} to derive the last equality.
    Since $\V_\varphi\varphi$ is a Gaussian (see  \eqref{eq:Gphi}) it follows that 
    \begin{equation}\label{eq:spectfgamma_est1}
    |\ft\big( \spect f\cdot e^{-\frac\pi8 |\cdot|^2}\big) (\xi)| \le |\V_\gamma \gamma(q\xi')| \cdot \frac89 e^{-\frac{4\pi}9 |\xi|^2}.
    \end{equation}
    It remains to establish a pointwise bound for $\V_\gamma \gamma$.
    Note that it follows from \eqref{eq:spectfgamma} (replace $z$ by $qz$ and take roots on both sides) that 
    $$
    |\G \gamma(z)| = |\G f(qz)| e^{-\frac\pi{18}|z|^2}.  
    $$
    Since the Gabor transform is unitary we can use this to estimate for $p\in\R^2$
    \begin{multline}
        |\V_\gamma \gamma(p)|  = |\langle \gamma, \pi(p)\gamma\rangle| = 
        |\langle \G \gamma, \G [\pi(p)\gamma]\rangle_{L^2(\R^2)}| \\
        \le \int_{\R^2} |\G \gamma(z)| \cdot |\G \gamma(z-p)|\,\mbox{d}z
        \le \|\spect f\|_{L^\infty} \int_{\R^2} e^{-\frac\pi{18} (|z|^2+|z-p|^2)}\,\mbox{d}z.
    \end{multline}
    Since the integral expression 
    $$
    \rho(p):= \int_{\R^2} e^{-\frac\pi{18} (|z|^2+|z-p|^2)}\,\mbox{d}z
    $$
    only depends on $|p|$ we can assume that $p=(|p|,0)^T$ and compute
    $$
    \rho(p) = \left(\int_\R e^{-\frac\pi{18}(x^2+(x-|p|)^2} \right) \cdot \left(\int_\R e^{-\frac\pi9 \omega^2}\,\mbox{d}\omega \right) 
    = 9e^{-\frac{\pi}{36}|p|^2}.
    $$
    In particular, we get that 
    $$
    |\V_\gamma \gamma(q\xi') | \le 9 \|\spect f\|_{L^\infty} e^{-\frac{2\pi}{81}|\xi|^2},
    $$
    which together with \eqref{eq:spectfgamma_est1} implies the claim.
\end{proof}

We will need the following localization property of Gabor transforms:

\begin{lemma}\label{lem:gabordisccont}
    It holds for all $r>0$, $z\in\R^2$ and $f\in L^2(\R)$  that 
    $$
    |\G f(z)| \le \frac{\sqrt{e^{\pi r^2}-1}}{\pi r^2} \|\G f\|_{L^2(B_r(z))}.
    $$
\end{lemma}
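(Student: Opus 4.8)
The plan is to reduce to the case $z=0$ by covariance, identify $|\G f(0)|$ with a point value of an entire function, and then use the area mean value property together with Cauchy--Schwarz.

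First I would use the covariance relation $|\G[\pi(\lambda)g](p)|=|\G g(p-\lambda)|$. Replacing $f$ by $\pi(-z)f$ sends $|\G f(z)|$ to $|\G[\pi(-z)f](0)|$ and, after the substitution $q=p+z$, sends $\|\G f\|_{L^2(B_r(z))}$ to $\|\G[\pi(-z)f]\|_{L^2(B_r(0))}$; hence both sides of the asserted inequality are unchanged and it suffices to treat $z=0$, i.e. to prove
$$
|\G f(0)|\le \frac{\sqrt{e^{\pi r^2}-1}}{\pi r^2}\,\|\G f\|_{L^2(B_r(0))}.
$$

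Next I would pass to the Bargmann side via \eqref{eq:relgaborbargmann}. Evaluated at the origin it gives $|\G f(0)|=|\mathcal{B}f(0)|$, and more generally $|\G f(x,-y)|^2=|\mathcal{B}f(x+iy)|^2 e^{-\pi(x^2+y^2)}$. Since $(x,\omega)\mapsto(x,-\omega)$ is an isometry of $\R^2$, the change of variables $\omega\mapsto-\omega$ yields
$$
\|\G f\|_{L^2(B_r(0))}^2=\int_{|w|<r}|\mathcal{B}f(w)|^2 e^{-\pi|w|^2}\,\mbox{d}w ,
$$
where $\mbox{d}w$ is Lebesgue measure on $\C\cong\R^2$. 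As $\mathcal{B}f$ is entire, the mean value property over the disk $\{|w|<r\}$ (area $\pi r^2$) gives $\mathcal{B}f(0)=\frac1{\pi r^2}\int_{|w|<r}\mathcal{B}f(w)\,\mbox{d}w$. Taking moduli, writing $|\mathcal{B}f(w)|=\bigl(|\mathcal{B}f(w)|e^{-\pi|w|^2/2}\bigr)\cdot e^{\pi|w|^2/2}$ and applying Cauchy--Schwarz,
$$
|\mathcal{B}f(0)|\le\frac1{\pi r^2}\Bigl(\int_{|w|<r}|\mathcal{B}f(w)|^2 e^{-\pi|w|^2}\,\mbox{d}w\Bigr)^{1/2}\Bigl(\int_{|w|<r}e^{\pi|w|^2}\,\mbox{d}w\Bigr)^{1/2}.
$$
The last integral is $\int_0^r e^{\pi\rho^2}2\pi\rho\,\mbox{d}\rho=e^{\pi r^2}-1$ by polar coordinates, and combining with the displayed identity for $\|\G f\|_{L^2(B_r(0))}^2$ produces exactly the claimed constant $\frac{\sqrt{e^{\pi r^2}-1}}{\pi r^2}$.

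I do not anticipate a genuine obstacle: the argument is essentially the area mean value property dressed up with the Bargmann--Gabor dictionary. The only care needed is bookkeeping in passing between the Euclidean ball $B_r(0)\subseteq\R^2$ and the corresponding disk in $\C$ (handled by the reflection $(x,\omega)\mapsto(x,-\omega)$), and checking that the Gaussian weight splits correctly so that the exponential integral evaluates to $e^{\pi r^2}-1$; no integrability hypothesis on $\mathcal{B}f$ is required since it is entire, hence continuous on the compact disk.
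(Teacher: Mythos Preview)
Your proposal is correct and follows essentially the same route as the paper: reduce to $z=0$ by covariance, use the area mean value property for the entire function $\mathcal{B}f$, and apply Cauchy--Schwarz with the Gaussian weight to recover the constant $\sqrt{e^{\pi r^2}-1}/(\pi r^2)$. The paper's proof is slightly terser (it writes the mean value integral directly in Gabor variables rather than first identifying $\|\G f\|_{L^2(B_r(0))}^2$ with a weighted Bargmann norm), but the ideas are identical.
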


\begin{proof}
    We may assume w.l.o.g. that $z=0$. Using the mean value property for the Bargmann transform we get that 
    \begin{align*}
    \G f(0) = \B f(0) &= \frac1{\pi r^2} \int\limits_{\{|z|<r\}} \B f(z) \,\mbox{d}z \\
    &= \frac1{\pi r^2} \iint\limits_{B_r(0)} \G f(x,-y) e^{\frac\pi2 (x^2+\omega^2)} e^{-\pi ixy} \,\mbox{d}x\mbox{d}y.
    \end{align*}
    Cauchy-Schwarz implies that 
    $$
    |\G f(0)| \le \frac1{\pi r^2} \|\G f\|_{L^2(B_r(0))} \cdot \sqrt{e^{\pi r^2}-1},
    $$
    and we are done.
\end{proof}

\subsection{Gaussian Gabor frames}\label{sec:gaussgabor}
This paragraph is concerned with discreti\-zation properties of the STFT.
We refer to textbooks of Gröchenig \cite{gro:foundtfanalysis} and Christensen \cite{christensen:intro} for a comprehensive account of this topic including the collection of statements below.\\

A family of vectors $(f_i)_{i\in I}\subseteq \mathcal{H}$ with $\mathcal{H}$ a Hilbert space is called a \emph{frame} if there exist constants $0<A\le B$ such that 
$$
A \|f\|_{\mathcal{H}}^2 \le \sum_{i\in I} |\langle f,f_i\rangle_{\mathcal{H}}|^2 \le B\|f\|_{\mathcal{H}}^2, \quad f\in \mathcal{H}.
$$
The maximal $A$ and the minimal $B$ to satisfy the above condition are called frame constants.
Given a frame $(f_i)_{i\in I}$ the frame operator defined by 
$$
S: \mathcal{H}\rightarrow \mathcal{H}, \, Sf=\sum_{i\in I} \langle f,f_i\rangle_{\mathcal{H}} f_i
$$
is well-defined, bounded, invertible, self-adjoint and positive. Moreover, the family 
$(S^{-1}f_i)_{i\in I}$ forms a frame with frame constants $B^{-1}, A^{-1}$. 
The frame $(S^{-1}f_i)_{i\in I}$ is called the dual frame.\\

We consider function systems of the form $\Phi=(\pi(\lambda)\varphi)_{\lambda\in \Lambda_{a,b}}$, with $\varphi$ the Gaussian, $\varphi(x)=2^{1/4}e^{-\pi x^2}$,
$\Lambda_{a,b}=a\Z\times b\Z$ a lattice, $a,b>0$ discretization parameters.
In this case, $\Phi$ forms a frame for $L^2(\R)$ if and only if $ab<1$.
By the definition of the Gabor transform the frame inequality can be written in the following way
$$
A\|f\|_{L^2(\R)}^2 \le \sum_{\lambda\in\Lambda} |\G f(\lambda)|^2 \le B\|f\|_{L^2(\R)}^2, \quad f\in L^2(\R). 
$$
The canonical dual of the frame $\Phi$ has Gabor structure as well and is given by $(\pi(\lambda)\gamma)_{\lambda\in\Lambda_{a,b}}$ with 
$\gamma = S^{-1}\varphi$ and  the reconstruction identity
\begin{equation}\label{eq:dualwindowrec}
f = \sum_{\lambda\in \Lambda} \langle f, \pi(\lambda) \varphi\rangle_{L^2(\R)} \pi(\lambda) \gamma
= \sum_{\lambda\in\Lambda} \G f(\lambda) \pi(\lambda) \gamma
, \quad f\in L^2(\R),
\end{equation}
holds true, 
where the sum converges unconditionally in $L^2(\R)$.

For the case of integer redundancy -- which means that the parameters of the underlying lattice $a\Z\times b\Z$ satisfy that $(ab)^{-1} \in \{2,3,4,\ldots\}$ -- Janssen computed several Gabor frame related quantities explicitly \cite[section 6]{janssen:someweyl}.
In the sequel, we will only consider the case 
$$
a=b=\mathfrak{a}=\dfrac{1}{\sqrt{2}},\qquad\Lambda=\Lambda_{\mathfrak{a}}=\dfrac{1}{\sqrt{2}}\Z^2
$$
which falls in this setting.
In principle, other configurations could also be used in our approach. However, the above is somehow the canonical choice in the sense that $\Lambda_\mathfrak{a}$ is the sparsest possible square lattice within the setting of Janssen's results.

For the present situation, Janssens's results imply the following.
The lower and upper frame bound of $(\pi(\lambda)\varphi)_{\lambda\in\Lambda}$ are given by
\begin{align*}
A &= 2 \left(\sum_{k\in\Z}(-1)^k e^{-\pi k^2} \right)^2 = 2 \vartheta_3(\frac\pi2,e^{-\pi})^2 = 1.66\ldots\\
B &= 2 \left(\sum_{k\in\Z}  e^{-\pi k^2} \right)^2 =  2 \vartheta_3(0,e^{-\pi})^2 = 2.36\ldots
\end{align*}
respectively.
The dual window is given by the formula
\begin{equation}\label{eq:formulapsi}
\psi(t) = \frac1{2\vartheta_3(\sqrt2 \pi t, e^{-\pi})} \sum_{k\in\Z} c_k \varphi(t-\sqrt2 k)
\end{equation}
with the coefficients given by 
\begin{equation}\label{def:coeffsc_k}
    c_k =  
    \frac{\sum_{m=0}^\infty (-1)^{k+m} e^{-\pi(m+\frac12)(2|k|+m+\frac12)}}{\sum_{n=-\infty}^\infty (-1)^n (n+\frac12)e^{-\pi(n+\frac12)^2}}.
\end{equation}
We will require several explicit bounds on the dual window function.
\begin{lemma}\label{lem:bdspsi}
    The canonical dual $\psi$ satisfies the bounds 
    $$
    \|\psi\|_{L^2(\R)} \le 0.6 \quad \text{and}\quad |\psi(t)| \le e^{-\frac{\pi |t|}{\sqrt2}}, \, t\in\R
    $$
    while $\|\G\psi\|_{L^1(\R^2)}\leq  1.23$.
\end{lemma}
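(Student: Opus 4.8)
The plan is to prove the three estimates by three independent arguments. The $L^2$-bound needs no computation with the series \eqref{eq:formulapsi}: since $\psi=S^{-1}\varphi$ with $S$ the frame operator of $(\pi(\lambda)\varphi)_{\lambda\in\mathfrak{a}\Z^2}$, and $S\succeq A\cdot\mathrm{Id}$ with $A=2\vartheta_3(\pi/2,e^{-\pi})^2$ the lower frame bound recalled above, while $\|\varphi\|_{L^2}=1$, one gets $\|\psi\|_{L^2}=\|S^{-1}\varphi\|_{L^2}\le A^{-1}$; and $A^{-1}<0.6$ follows from $\vartheta_3(\pi/2,e^{-\pi})=1-2e^{-\pi}+2e^{-4\pi}-\cdots\ge 1-2e^{-\pi}$ together with $2(1-2e^{-\pi})^2>5/3$. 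The two remaining bounds both rest on \eqref{eq:formulapsi}--\eqref{def:coeffsc_k} plus two elementary facts: (i) $\vartheta_3(\sqrt2\pi t,e^{-\pi})=1+2\sum_{k\ge1}e^{-\pi k^2}\cos(2\sqrt2\pi kt)\ge 2-\vartheta_3(0,e^{-\pi})>0$ for $t\in\R$; and (ii) $|c_k|\le e^{-\pi(|k|+1/4)}/D$, where $D=\sum_{n\in\Z}(-1)^n(n+\tfrac12)e^{-\pi(n+1/2)^2}>0$ is the denominator in \eqref{def:coeffsc_k} and satisfies $D\ge e^{-\pi/4}(1-3e^{-2\pi})$. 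Both parts of (ii) come from the Leibniz estimate for alternating series with monotone terms applied, respectively, to the inner series defining the numerator of $c_k$ and to $D=2\sum_{n\ge0}(-1)^n(n+\tfrac12)e^{-\pi(n+1/2)^2}$.

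For the pointwise bound I would start from \eqref{eq:formulapsi} and use $\varphi>0$ to get $2\,\vartheta_3(\sqrt2\pi t,e^{-\pi})\,|\psi(t)|\le 2^{1/4}\sum_{k\in\Z}|c_k|\,e^{-\pi(t-\sqrt2 k)^2}$. Multiplying by $e^{\pi|t|/\sqrt2}$ and reducing to $t\ge0$ (legitimate since $\psi$ is real and even, clear from \eqref{eq:formulapsi}), completing the square gives $e^{\pi t/\sqrt2-\pi(t-\sqrt2 k)^2}=e^{\pi k+\pi/8}\,e^{-\pi(t-\sqrt2 k-1/(2\sqrt2))^2}$. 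Since $|c_k|e^{\pi k}\le e^{-\pi/4}/D$ holds uniformly in $k\in\Z$ by (ii), Lemma \ref{lem:sumgaussshifts} applied with $b=1/2$ (after the substitution $s=(t-1/(2\sqrt2))/\sqrt2$) bounds $\sum_{k\in\Z}e^{-\pi(t-\sqrt2 k-1/(2\sqrt2))^2}$ by $\tfrac1{\sqrt2}\vartheta_3(0,e^{-\pi/2})$. Collecting constants and invoking (i) and the lower bound on $D$ yields $|\psi(t)|\le C\,e^{-\pi|t|/\sqrt2}$ with $C=\dfrac{2^{1/4}e^{\pi/8}\vartheta_3(0,e^{-\pi/2})}{2\sqrt2\,(2-\vartheta_3(0,e^{-\pi}))(1-3e^{-2\pi})}$, and one checks $C\approx 0.97<1$.

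For $\|\G\psi\|_{L^1(\R^2)}$ the plan is to convert \eqref{eq:formulapsi} into a Janssen-type expansion along the adjoint lattice $\sqrt2\Z^2$. Write $\vartheta_3(\pi u,e^{-\pi})=1+\phi(u)$ with $\phi(u)=2\sum_{k\ge1}e^{-\pi k^2}\cos(2\pi ku)$; then $\phi$ has absolutely summable Fourier coefficients with $\ell^1$-norm $\vartheta_3(0,e^{-\pi})-1<1$, so $1/(1+\phi)=\sum_{n\ge0}(-\phi)^n$ converges in the Wiener algebra and $1/(2\vartheta_3(\sqrt2\pi t,e^{-\pi}))=\sum_{l\in\Z}\hat d_l\,e^{2\pi i l\sqrt2 t}$ with $\sum_{l}|\hat d_l|\le \tfrac1{2(2-\vartheta_3(0,e^{-\pi}))}$. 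Substituting this into \eqref{eq:formulapsi} and using $e^{2\pi i l\sqrt2 t}\varphi(t-\sqrt2 k)=(\pi(\sqrt2 k,\sqrt2 l)\varphi)(t)$ produces the absolutely convergent representation $\psi=\sum_{k,l\in\Z}c_k\hat d_l\,\pi(\sqrt2 k,\sqrt2 l)\varphi$. Applying $\G$, the triangle inequality, and $\|\G[\pi(\mu)\varphi]\|_{L^1(\R^2)}=\int_{\R^2}e^{-\pi|z|^2/2}\,\mbox{d}z=2$ (from \eqref{eq:Gphitrans}) gives $\|\G\psi\|_{L^1}\le 2\bigl(\sum_{k}|c_k|\bigr)\bigl(\sum_{l}|\hat d_l|\bigr)$; bound (ii) yields $\sum_{k}|c_k|\le \dfrac{1+e^{-\pi}}{(1-e^{-\pi})(1-3e^{-2\pi})}$, and the two numerical factors multiply out to $\approx 1.20<1.23$.

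The main obstacle is not conceptual but quantitative: the pointwise constant ($\approx 0.97$) and the $L^1$ constant ($\approx 1.20$) each leave only a couple of percent of slack, so the auxiliary estimates for $|c_k|$, for $D$, and for $\vartheta_3$ on the line must be genuinely sharp rather than crude majorizations. The one place requiring honest bookkeeping is extracting the clean geometric bound $|c_k|\le e^{-\pi(|k|+1/4)}/D$ from the nested double series \eqref{def:coeffsc_k} — in particular verifying that its summands are monotone, so that the Leibniz estimate applies to the numerator and to $D$ alike.
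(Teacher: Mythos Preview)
Your proof is correct and, for the $L^2$-bound and the pointwise bound, follows essentially the same path as the paper: the same lower bound $\vartheta_3(\sqrt2\pi t,e^{-\pi})\ge 2-\vartheta_3(0,e^{-\pi})$, the same Leibniz-type estimates $|c_k|\le e^{-\pi(|k|+1/4)}/D$ and $D\ge e^{-\pi/4}(1-3e^{-2\pi})$, and the same completion of the square followed by a Poisson/Lemma~\ref{lem:sumgaussshifts} bound on the resulting Gaussian sum. Your bookkeeping is packaged slightly differently (you pull out $e^{-\pi/4}/D$ uniformly via $|c_k|e^{\pi k}\le e^{-\pi/4}/D$, whereas the paper first replaces $|k|$ by $k$ in the exponent), but the two computations are equivalent and land on constants within a couple of percent of each other.

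For $\|\G\psi\|_{L^1(\R^2)}$ there is a genuine, if minor, difference. The paper invokes Janssen's explicit Fourier expansion of $1/\vartheta_3(\sqrt2\pi t,e^{-\pi})$, namely $\frac{1}{\kappa}\sum_\ell \gamma_\ell e^{2\sqrt2\pi i\ell t}$ with the same $\gamma_\ell$ and $\kappa=D$ as in \eqref{def:coeffsc_k}, and then bounds each $|\gamma_\ell|$ individually. You instead use a soft Wiener-algebra argument: write $\vartheta_3=1+\phi$ with $\|\phi\|_{A}=\vartheta_3(0,e^{-\pi})-1<1$ and sum the geometric series, getting $\sum_\ell|\hat d_\ell|\le\tfrac{1}{2(2-\vartheta_3(0,e^{-\pi}))}$. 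Both routes produce the same adjoint-lattice expansion $\psi=\sum_{k,\ell}c_k\hat d_\ell\,\pi(\sqrt2 k,\sqrt2\ell)\varphi$ and the same final inequality $\|\G\psi\|_{L^1}\le 2\bigl(\sum_k|c_k|\bigr)\bigl(\sum_\ell|\hat d_\ell|\bigr)$; your bound evaluates to $\approx 1.20$, the paper's to $\approx 1.22$. Your argument has the advantage of being self-contained (no appeal to the Janssen formulas), at the cost of not identifying the coefficients $\hat d_\ell$ explicitly.
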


\begin{remark}
Recall that $\|\G\psi\|_{L^1(\R^2)}$ is the norm of $\psi$ in the so-called {\em Feichtinger algebra} $\mathcal{S}_0$ which is also the {\em modulation space} $M_1$.
The fact that $\|\G\psi\|_{L^1(\R^2)}<+\infty$ follows from \cite[Theorem 1.2]{GL}. We here compute an explicit bound.
\end{remark}

\begin{proof}
    Since the operator norm of the inverse of the frame operator $S^{-1}$ is bounded from above by the reciprocal of the lower frame bound, we get
    $$
    \|\psi\|_{L^2(\R)} = \|S^{-1}\varphi\|_{L^2(\R)}\le \frac1{A} \|\varphi\|_{L^2(\R)} = \frac1{A}<0.6.
    $$
    For the pointwise bound we first estimate for $x\in \R$
    \begin{multline}
    \vartheta_3(x,e^{-\pi}) = 1 + 2 \sum_{k=1}^\infty e^{-\pi k^2} \cos(2kx) \ge 1 - 2\sum_{k=1}^\infty e^{-\pi k^2}\\
    =2 - \sum_{k\in\Z} e^{-\pi k^2} = 2 - \vartheta_3(0,e^{-\pi}) = 0.913\ldots
    \end{multline}
    With this we get that the first factor in the formula for $\psi$ is bounded by
    $$
    \frac{1}{2\vartheta_3\big(\sqrt2 \pi t,e^{-\pi} \big)} < 0.55
    $$
    Next, we deal with the sum in the denominator of $c_k$.
    Note that if $(a_n)_{n\in\N}$ is a non-increasing sequence of positive numbers, then 
    $\sum_{n=0}^\infty (-1)^n a_n \ge a_0- a_1$.
    Since $n\mapsto (n+\frac12)e^{-\pi(n+\frac12)^2}$ is monotonically decreasing for $n\ge 0$ we get a lower bound for the denominator in \eqref{def:coeffsc_k}:
    \begin{align*}
        \kappa :=\sum\limits_{n=-\infty}^\infty (-1)^n (n+\frac12) e^{-\pi(n+\frac12)^2}
            &= 2 \sum\limits_{n=0}^\infty (-1)^n (n+\frac12) e^{-\pi(n+\frac12)^2}\\
            &\ge 2 \left( \frac{e^{-\frac\pi4}}2 - \frac{3 e^{-\frac{9\pi}4}}2 \right) 
            >0.45.
    \end{align*}
    Let us denote the numerator of $c_k$ by
    $$
    \gamma_k := (-1)^k \sum_{m=0}^\infty (-1)^m e^{-\pi(m+\frac12)(2|k|+m+\frac12)}.
    $$
    A similar argument as above implies that 
    \begin{equation}
        0< (-1)^k \gamma_k = \sum_{m=0}^\infty (-1)^m e^{-\pi (m+\frac12)(2|k|+m+\frac12)} \le e^{-\pi(|k|+\frac14)}.
    \end{equation}
    With this we get that 
    \begin{equation}
        |\psi(t)| \le 0.55 \cdot \sum_{k\in\Z} |c_k| \varphi(t-\sqrt2 k) 
        \le \frac{0.55}{0.45} e^{-\frac\pi4} 2^{\frac14} \cdot  \sum_{k\in\Z} e^{-\pi|k|} e^{-\pi (t-\sqrt2 k)^2}.
    \end{equation}
    The product of the constants in front of the sum is bounded above by $0.67$.
    Clearly, the sum is even with respect to $t$. 
   Therefore, we can replace $t$ by $|t|$  and further estimate
    \begin{align*}
        \sum_{k\in\Z} e^{-\pi|k|} e^{-\pi (t-\sqrt2 k)^2} &= e^{-\frac{\pi|t|}{\sqrt2}} \cdot \sum_{k\in\Z} \exp \left\{ -\pi \left(|k|-\frac{|t|}{\sqrt2} + (|t|-\sqrt2 k)^2 \right)\right\}\\
        &\le  e^{-\frac{\pi|t|}{\sqrt2}}  \cdot \sum_{k\in\Z} \exp \left\{ -\pi \left(k-\frac{|t|}{\sqrt2} + (|t|-\sqrt2 k)^2 \right)\right\}\\
        &= e^{-\frac{\pi|t|}{\sqrt2}} \cdot \sum_{k\in\Z} e^{-\pi p\left(\frac{|t|}{\sqrt2}-k \right)},
    \end{align*}
    where $p(x) = 2x^2-x= 2 \left(x-\frac14 \right)^2 -\frac18$.
    The function $\sum_{k\in\Z} e^{-\pi p(\cdot-k)}$ coincides up to a translation with 
    $$g(x)=e^{\frac\pi8}\sum_{k\in\Z} e^{-2\pi(k-x)^2}.$$
    Using Poisson summation formula we get that
    \begin{equation}
         g(x) = \frac{e^{\frac\pi8}}{\sqrt2} \sum_{\ell\in\Z} e^{-2\pi i \ell x} e^{-\frac\pi2 \ell^2} \le 
         \frac{e^{\frac\pi8}}{\sqrt2} \sum_{\ell\in\Z}e^{-\frac\pi2 \ell^2} =  \frac{e^{\frac\pi8}}{\sqrt2} \vartheta_3(0, e^{-\frac\pi2}) = 1.48\dots
    \end{equation}
    Since $1.49\cdot 0.67 <1$ we arrive at the desired bound.

    \smallskip

    It remains to prove the $L^1$-bound of $\G\psi$. First note that, as 
    $$|\G[\pi(\lambda)\varphi]|=e^{-\frac{\pi}{2} |\cdot - \lambda|^2},$$
    we get
    $\|\G[\pi(\lambda)\varphi]\|_{L^1(\R^2)}=\|\G\varphi\|_{L^1(\R^2)}=2$.
    It follows from \cite[formulas (6.10) and (6.11)]{janssen:someweyl}
    that 
    \begin{equation}
        \frac1{\vartheta_3(\sqrt{2}\pi t, e^{-\pi})} = \frac1{\kappa} \sum_{\ell=-\infty}^\infty \gamma_\ell e^{2 \sqrt2 \pi i \ell t}.
    \end{equation}
    Thus, we have that 
        \begin{equation}
            \psi(t) = \frac1{\vartheta_3(\sqrt2 \pi t, e^{-\pi})} \sum_{k\in\Z} c_k \varphi(t-\sqrt2 k)
            = \frac1{2\nu^2} \sum_{\ell\in\Z} \sum_{k\in\Z} \gamma_\ell \gamma_k \,\pi 
            \binom{\sqrt2 k}{2^{3/2}\ell}\varphi
        \end{equation}
    and applying $\G$ gives 
    \begin{equation}
        \G \psi = \frac1{2\nu^2} \sum_{\ell\in\Z} \sum_{k\in\Z} \gamma_\ell \gamma_k \, \G \left[\pi 
            \binom{\sqrt2 k}{2^{3/2}\ell}\varphi \right].
    \end{equation}
    Making use of the above estimates we get that
    \begin{equation}
        \|\G \psi\|_{L^1(\R^2)} \le \frac1{2\cdot 0.45^2} 
        \sum_{\ell\in\Z} \sum_{k\in \Z} 2 e^{-\pi(|k|+|\ell|+\frac12)}
    =  \frac{e^{-\frac\pi2}}{0.45^2}
    \cdot \left(1 + \frac{2 e^{-\pi}}{1-e^{-\pi}} \right)^2
    \end{equation}
    which is $<1.23$, and we are done.
\end{proof}

Finally, we will also need two simple lemmas. The first one deals with Bessel-bounds
for the systems $\{\pi(\lambda)\psi\}_{\lambda\in\Lambda}$ and $\{\pi(\lambda)\varphi\}_{\lambda\in\Lambda}$. 

\begin{lemma}\label{lem:rieszbdpsi}
    For all $c=(c_\lambda)_{\lambda\in\Lambda} \in \ell^2(\Lambda)$ it holds that 
    $$
        \left\|\sum_{\lambda\in\Lambda} c_\lambda \pi(\lambda) \varphi\right\|_{L^2(\R)} \le 4.25 \|c\|_{\ell^2(\Lambda)}
 \ \mbox{and}\ 
    \left\|\sum_{\lambda\in\Lambda} c_\lambda \pi(\lambda)\psi\right\|_{L^2(\R)} \le 3.1 \|c\|_{\ell^2(\Lambda)}.
    $$
\end{lemma}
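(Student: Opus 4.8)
The plan is to observe that, for $g\in\{\varphi,\psi\}$, the map $c\mapsto\sum_{\lambda\in\Lambda}c_\lambda\pi(\lambda)g$ is precisely the synthesis operator $C_g\colon\ell^2(\Lambda)\to L^2(\R)$ of the (finite) system $\{\pi(\lambda)g\}_{\lambda\in\Lambda}$, so both inequalities reduce to bounding the operator norm $\|C_g\|$. I would then invoke the standard fact that $C_g$ is the Hilbert–space adjoint of the analysis operator $D_g\colon f\mapsto(\langle f,\pi(\lambda)g\rangle)_{\lambda\in\Lambda}=(\V_g f(\lambda))_{\lambda\in\Lambda}$, so that $\|C_g\|=\|D_g\|$ and $\|D_g\|^2$ is the optimal upper Bessel bound of $\{\pi(\lambda)g\}_{\lambda\in\Lambda}$. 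Since $\Lambda\subseteq\mathfrak{a}\Z^2$, passing from the full lattice to a subset only decreases this quantity, so it suffices to control the Bessel bounds of the full Gabor systems $\{\pi(\lambda)\varphi\}_{\lambda\in\mathfrak{a}\Z^2}$ and $\{\pi(\lambda)\psi\}_{\lambda\in\mathfrak{a}\Z^2}$.

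For the $\varphi$-estimate I would simply quote Janssen's value of the upper frame bound recalled above, $B=2\vartheta_3(0,e^{-\pi})^2=2.36\ldots$, giving $\|C_\varphi\|=\sqrt B<1.6\le 4.25$. If one prefers a self-contained argument, a Schur test on the Gram matrix $G$ with entries $\langle\pi(\lambda)\varphi,\pi(\mu)\varphi\rangle$ also works: by \eqref{eq:Gphi} and the covariance of the STFT, $|G_{\lambda,\mu}|=|\G\varphi(\lambda-\mu)|=e^{-\frac{\pi}{2}|\lambda-\mu|^2}$, so $\|C_\varphi\|^2=\|G\|_{\mathrm{op}}\le\sup_\lambda\sum_{\mu\in\mathfrak{a}\Z^2}|G_{\lambda,\mu}|=\sum_{\nu\in\mathfrak{a}\Z^2}e^{-\frac{\pi}{2}|\nu|^2}=\vartheta_3(0,e^{-\pi/4})^2=4.00\ldots$, again comfortably below $4.25^2$.

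For the $\psi$-estimate I would use $\psi=S^{-1}\varphi$, where $S$ is the frame operator of $(\pi(\lambda)\varphi)_{\lambda\in\mathfrak{a}\Z^2}$. Since $S$ commutes with the lattice time–frequency shifts, $\pi(\lambda)\psi=S^{-1}\pi(\lambda)\varphi$ for every $\lambda\in\mathfrak{a}\Z^2$, hence $C_\psi=S^{-1}C_\varphi$ on $\ell^2(\Lambda)$, and therefore $\|C_\psi\|\le\|S^{-1}\|_{\mathrm{op}}\,\|C_\varphi\|\le A^{-1}\|C_\varphi\|$ with $A=2\vartheta_3(\tfrac{\pi}{2},e^{-\pi})^2=1.66\ldots$ the lower frame bound (recall that $\|S^{-1}\|_{\mathrm{op}}$ is bounded by the reciprocal of the lower frame bound). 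Combined with the previous step this yields $\|C_\psi\|\le 4.25/1.66<3.1$. Equivalently, one can argue directly: $\{\pi(\lambda)\psi\}_{\lambda\in\mathfrak{a}\Z^2}$ is the canonical dual frame, whose optimal upper Bessel bound is $A^{-1}$, giving the sharper $\|C_\psi\|=\sqrt{A^{-1}}<0.78$.

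There is no genuinely hard step: the lemma is a short application of Gabor frame theory, and the stated constants $4.25$ and $3.1$ are very generous. The only points that warrant a moment's care are the identification $\|C_g\|=\|D_g\|=(\text{upper Bessel bound})^{1/2}$ and the (immediate) monotonicity allowing the reduction from $\mathfrak{a}\Z^2$ to an arbitrary subset $\Lambda$; once those are in place it only remains to substitute the explicit $\vartheta_3$-values already quoted and verify the two numerical inequalities, which hold with room to spare.
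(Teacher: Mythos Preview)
Your proof is correct, and in fact cleaner than the paper's. The paper argues the $\psi$-bound by first establishing the pointwise decay $|\psi(t)|\le e^{-\pi|t|/\sqrt2}$ (Lemma \ref{lem:bdspsi}) and then feeding this into a Walnut-type Bessel-bound criterion \cite[Proposition 11.5.2]{christensen:intro}, obtaining a numerical constant $B'$ with $\sqrt{B'}$ serving as the synthesis bound; for $\varphi$ it appeals directly to the Janssen value of $B$. You bypass the pointwise analysis entirely by using abstract frame duality: $\pi(\lambda)\psi=S^{-1}\pi(\lambda)\varphi$ for lattice $\lambda$ (since $S$ commutes with lattice shifts), hence $C_\psi=S^{-1}C_\varphi$ and $\|C_\psi\|\le A^{-1}\|C_\varphi\|$; or even more directly, the canonical dual frame has optimal upper Bessel bound $A^{-1}$, a fact the paper itself recalls in Section \ref{sec:gaussgabor}. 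This yields much sharper constants ($\|C_\varphi\|\le\sqrt{B}<1.54$ and $\|C_\psi\|\le\sqrt{A^{-1}}<0.78$) with no extra work. The paper's approach has the minor virtue of illustrating the quantitative decay of $\psi$ already established, but yours is the more natural argument here.
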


\begin{proof} As we will not use the first inequality, we will only give full detail for the second one.
We define the quantity
$$
B' := 2\sqrt2 \left( \sum_{k\in\Z} \|\psi\|_{L^\infty([k\mathfrak{a},(k+1)\mathfrak{a}))} \right)^2.
$$
With the help of Lemma \ref{lem:bdspsi} we get that the expression inside the brackets is bounded by
\begin{equation}
 \sum_{k=-\infty}^{-1} e^{-\frac{\pi}{\sqrt2} |(k+1)\mathfrak{a}|} + \sum_{k=0}^\infty e^{-\frac{\pi}{\sqrt2}|k\mathfrak{a}|}
= 2 \sum_{k=0}^\infty e^{-\frac\pi2 k} = \frac2{1-e^{-\frac\pi2}} = 2.52\dots,
\end{equation}
which implies that $B'\le 18.04$.
It follows from \cite[Proposition 11.5.2]{christensen:intro} that $B'$ is an upper frame bound for $(\pi(\lambda)\psi)_{\lambda\in\Lambda}$.
Moreover, \cite[Theorem 3.2.3]{christensen:intro} implies that 
$$
\|\sum_{\lambda\in\Lambda} c_\lambda \pi(\lambda)\psi\|_{L^2(\R)} \le \sqrt{B'} \|c\|_{\ell^2(\Lambda)},
$$
which finishes the proof of the second inequality.

To prove the first inequality, recall that the upper frame bound for the original frame $(\pi(\lambda)\varphi)_{\lambda\in\Lambda}$ is 
given by 
$$
B= 2 \vartheta_3(0,e^{-\pi})^2 < 1.54^2,
$$
so that we can proceed as above.
\end{proof}

Finally, we also have:

\begin{lemma}\label{lem:besselTg}
    Let $s>0$ and let $g(x):= e^{-2\pi x^2}$.
    Then it holds that $(T_{s\ell}g)_{\ell\in\Z}$ forms a Bessel sequence with 
    $$
    \sum_{\ell\in\Z} \big|\langle f,T_{s\ell}g\rangle \big|^2 \le \frac{\vartheta_3(0,e^{-\pi s^2})}2  \|f\|_{L^2}^2, \quad f\in L^2(\R).
    $$
\end{lemma}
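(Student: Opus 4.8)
The plan is to pass to the Fourier side, where $\{T_{s\ell}g\}_{\ell\in\Z}$ becomes a shift‑invariant system whose Bessel bound is governed by a periodization of $|\hat g|^2$. By Parseval, $\langle f,T_{s\ell}g\rangle = \int_\R \hat f(\xi)\overline{\hat g(\xi)}\,e^{2\pi i s\ell\xi}\,\mbox{d}\xi$, and splitting $\R$ into the intervals $[k/s,(k+1)/s)$ exhibits these numbers (up to the factor $1/s$) as the Fourier coefficients of the $\frac1s$‑periodic function $\Phi(\xi):=\sum_{k\in\Z}\hat f(\xi+k/s)\overline{\hat g(\xi+k/s)}$, which is locally integrable since $\hat f,\hat g\in L^2(\R)$. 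Parseval for Fourier series on an interval of length $1/s$ then yields $\sum_{\ell\in\Z}|\langle f,T_{s\ell}g\rangle|^2 = \frac1s\int_0^{1/s}|\Phi(\xi)|^2\,\mbox{d}\xi$.

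Next I would apply Cauchy--Schwarz in the summation index $k$ inside $\Phi(\xi)$, i.e. $|\Phi(\xi)|^2 \le \big(\sum_k|\hat f(\xi+k/s)|^2\big)\big(\sum_k|\hat g(\xi+k/s)|^2\big)$, pull the supremum of the $\hat g$‑factor out of the integral, and use the periodization identity $\int_0^{1/s}\sum_k|\hat f(\xi+k/s)|^2\,\mbox{d}\xi = \|\hat f\|_{L^2}^2 = \|f\|_{L^2}^2$. This reduces the whole estimate to controlling $\sup_{\xi\in\R}\sum_{k\in\Z}|\hat g(\xi+k/s)|^2$.

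Finally I would compute $\hat g(\xi) = 2^{-1/2}e^{-\pi\xi^2/2}$ for the dilated Gaussian $g(x)=e^{-2\pi x^2}$, so that $|\hat g(\xi)|^2 = \frac12 e^{-\pi\xi^2}$, and invoke Lemma \ref{lem:sumgaussshifts} with parameter $b=s^2$ and $t=-s\xi$ to obtain $\sum_{k\in\Z}e^{-\pi(\xi+k/s)^2} = \sum_{k\in\Z}e^{-\frac\pi{s^2}(t-k)^2} \le s\,\vartheta_3(0,e^{-\pi s^2})$, uniformly in $\xi$. Combining the three steps gives $\sum_{\ell\in\Z}|\langle f,T_{s\ell}g\rangle|^2 \le \frac1s\cdot\frac12\, s\,\vartheta_3(0,e^{-\pi s^2})\cdot\|f\|_{L^2}^2 = \frac{\vartheta_3(0,e^{-\pi s^2})}{2}\|f\|_{L^2}^2$, as claimed. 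The only point requiring care is the bookkeeping of the $1/s$ normalizations in the Fourier‑series Parseval step; this computation is essentially the classical $CC$‑bound for systems of integer translates (cf.\ Christensen's book \cite{christensen:intro}), so no genuinely hard obstacle arises.
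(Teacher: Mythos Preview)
Your argument is correct and yields exactly the stated constant, but it takes a different route from the paper. The paper invokes a Schur--type criterion (Christensen \cite[Proposition~3.5.4]{christensen:intro}): a sequence $(g_\ell)$ is Bessel with bound $B$ as soon as $\sup_k\sum_\ell|\langle g_\ell,g_k\rangle|\le B$. It then computes the Gramian entries directly as $\langle T_{s\ell}g,T_{sk}g\rangle=\tfrac12 e^{-\pi s^2(k-\ell)^2}$ and sums the resulting geometric-type series to $\tfrac12\vartheta_3(0,e^{-\pi s^2})$. Your approach instead characterizes the Bessel bound for a system of integer translates via the Fourier-side periodization $\sup_\xi\sum_k|\hat g(\xi+k/s)|^2$, and then appeals to Lemma~\ref{lem:sumgaussshifts} to evaluate the supremum. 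The two routes are dual to each other through Poisson summation (which is precisely what Lemma~\ref{lem:sumgaussshifts} encodes), so it is no accident that they produce the identical constant. Your approach is slightly more self-contained in that it does not quote the Schur test as a black box; the paper's version is shorter once that proposition is available. One small remark: your Parseval identity $\sum_\ell|\langle f,T_{s\ell}g\rangle|^2=\tfrac1s\|\Phi\|_{L^2(0,1/s)}^2$ requires $\Phi\in L^2$, not just $L^1$; this is in fact immediate from your own Cauchy--Schwarz step (since $\sum_k|\hat g(\xi+k/s)|^2$ is bounded), but you may as well just write $\le$ there, which is all the argument needs.
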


\begin{proof}
    By  \cite[Proposition 3.5.4]{christensen:intro}, it suffices to show that for all $k\in\Z$ it holds that 
    $$
    \sum_{\ell\in \Z} |\langle T_{s\ell}g, T_{sk}g\rangle| \le \frac{\vartheta_3(0,e^{-\pi s^2})}2.
    $$
    Since for all $a,b\in\R$ we have that 
    $$
    \langle T_a g, T_b g\rangle = \langle g, T_{b-a} g\rangle = \int_\R e^{-2\pi t^2} e^{-2\pi (t-b+a)^2}\,\mbox{d}t = \frac12 e^{-\pi(b-a)^2},
    $$
    we indeed get that 
    \begin{equation}
        \sum_{\ell\in \Z} |\langle T_{s\ell}g, T_{sk}g\rangle| =\frac12  \sum_{\ell\in \Z} e^{-\pi s^2(k-\ell)^2} = \frac12  \sum_{\ell\in \Z} e^{-\pi s^2 \ell^2}
        =\frac{\vartheta_3(0,e^{-\pi s^2})}2
    \end{equation}
    as claimed.
\end{proof}

\subsection{Oversampling formula}
We denote the space of functions of bandwidth (at most) $b>0$ by $PW_b(\R^d)$, that is,
$$
PW_b(\R^d) = \Big\{f\in L^2(\R^d): \supp(\hat{f})\subseteq [-\frac{b}2,\frac{b}2]^d \Big\}.
$$
The cardinal sine is defined by 
$$
\sinc(x) =\begin{cases}
    \frac{\sin(\pi x)}{\pi x}, & x\neq 0\\
    1, & x=0.
\end{cases}
$$
For several variables the $\sinc$ function is defined by tensorization, i.e., 
$$
\sinc(x) = \sinc(x_1)\cdot \ldots \cdot \sinc(x_d),\quad x=(x_1,\ldots,x_d)\in \R^d.
$$
The famous sampling theorem attributed to Shannon, Whittaker and Kotelnikov asserts that 
a bandlimited function can be reconstructed given samples on the lattice $\frac1{b}\Z^d$ by virtue of the formula 
\begin{equation}
    f(x) = \sum_{k\in \Z^d} f\left( \frac{k}{b}\right) \sinc\left(b\left(x-\frac{k}{b}\right) \right),\quad f\in PW_b(\R^d).
\end{equation}
The series converges rather slowly due to the slow decay of the $\sinc$, which can be a disadvantage especially for numerical purposes.
By oversampling the function one can get an improvement on the convergence.
\begin{lemma}\label{lem:oversampling}
    Let $b>0$ and $0<h<1/b$. 
    Moreover, let $\eta\in L^1(\R^d)$ be a non-negative function such that 
    $$
    \supp(\eta) \subseteq [-1,1]^d \quad \text{and}\quad \int_{\R^d}\eta(\xi)\,\mbox{d}\xi = 1.
    $$
    Then it holds for all $f\in PW_b(\R^d)$ that 
    $$
    f(x) = \left(\frac{1+hb}2\right)^d \cdot \sum_{k\in\Z^d} f(hk) \widehat{\eta}\left(q(hk-x)\right) \sinc\left(2c(hk-x) \right)
    $$
    where $c=\frac14(\frac1{h}+b)$ and $q=\frac14(\frac{1}{h}-b)$.
\end{lemma}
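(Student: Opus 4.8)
The plan is to make the reconstruction kernel explicit and then reduce the statement to the classical oversampled sampling theorem. Set
$$
\psi(x):=\left(\frac{1+hb}2\right)^{d}\widehat{\eta}(qx)\,\sinc\!\left(2cx\right),\qquad x\in\R^{d},
$$
so that the asserted formula is precisely $f=\sum_{k\in\Z^{d}}f(hk)\,\psi(\,\cdot\,-hk)$. Everything hinges on two properties of $\psi$: (i) $\widehat{\psi}(\xi)=h^{d}$ for every $\xi\in[-b/2,b/2]^{d}$, and (ii) $\supp(\widehat{\psi})\subseteq[-1/(2h),1/(2h)]^{d}$, with $0\le\widehat{\psi}\le h^{d}$ throughout.

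To establish (i) and (ii) I would compute $\widehat{\psi}$ as a convolution of the Fourier transforms of the two factors. On the one hand $\widehat{\sinc(2c\,\cdot\,)}=(2c)^{-d}\bone_{[-c,c]^{d}}$. On the other hand, since $\eta\ge0$, $\supp(\eta)\subseteq[-1,1]^{d}$ and $\int\eta=1$, Fourier inversion shows that the Fourier transform of $x\mapsto\widehat{\eta}(qx)$ is the function $\rho(\xi):=q^{-d}\eta(-\xi/q)$, which is a probability density supported in $[-q,q]^{d}$. Using $c=(1+hb)/(4h)$ one checks the normalisation $(1+hb)/(4c)=h$, and hence, writing $B(\xi)=\prod_{j=1}^{d}[\xi_{j}-c,\xi_{j}+c]$,
$$
\widehat{\psi}(\xi)=h^{d}\bigl(\rho*\bone_{[-c,c]^{d}}\bigr)(\xi)=h^{d}\int_{B(\xi)}\rho(t)\,dt .
$$
Because $\rho$ is a probability density on $[-q,q]^{d}$ this quantity lies in $[0,h^{d}]$; it equals $h^{d}$ as soon as $[-q,q]^{d}\subseteq B(\xi)$, i.e.\ whenever $|\xi_{j}|\le c-q$ for all $j$, and it vanishes as soon as $B(\xi)\cap[-q,q]^{d}=\emptyset$, i.e.\ whenever $|\xi_{j}|>c+q$ for some $j$. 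The elementary identities $c-q=b/2$ and $c+q=1/(2h)$ now give exactly (i) and (ii).

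Finally I would run the standard aliasing-free argument. A function $f\in PW_{b}(\R^{d})$ is continuous (indeed entire of exponential type), and by a Plancherel--P\'olya inequality its samples $(f(hk))_{k}$ lie in $\ell^{2}(\Z^{d})$; moreover $\{\psi(\,\cdot\,-hk)\}_{k}$ is a Bessel system, since $\widehat{\psi}$ is bounded and supported in a fundamental domain of $\tfrac1h\Z^{d}$, so that $g:=\sum_{k}f(hk)\,\psi(\,\cdot\,-hk)$ converges in $L^{2}(\R^{d})$ and, being band-limited, uniformly. Taking Fourier transforms and invoking the Poisson summation formula for the sample sequence gives
$$
\widehat{g}(\xi)=\widehat{\psi}(\xi)\cdot\frac1{h^{d}}\sum_{n\in\Z^{d}}\widehat{f}\!\left(\xi+\tfrac{n}{h}\right).
$$
Since $b<1/h$, each translate $\widehat{f}(\,\cdot\,+n/h)$ with $n\ne0$ is supported outside $[-1/(2h),1/(2h)]^{d}\supseteq\supp(\widehat{\psi})$, so only the term $n=0$ survives; combined with $\widehat{\psi}=h^{d}$ on $\supp(\widehat{f})\subseteq[-b/2,b/2]^{d}$ this yields $\widehat{g}=\widehat{f}$, hence $g=f$, which is the claim. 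The only mildly delicate point, as I see it, is the Fourier-analytic bookkeeping in the merely-$L^{2}$ setting: making precise that $g$ is well defined, that Poisson summation applies to $(f(hk))_{k}$ so that $\widehat{g}$ factors as displayed, and that the non-overlap of the spectral translates is governed exactly by the hypothesis $b<1/h$; the kernel computation and the arithmetic $c-q=b/2$, $c+q=1/(2h)$, $(1+hb)/(4c)=h$ are entirely routine.
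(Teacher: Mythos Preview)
Your proof is correct and follows essentially the same approach as the paper: both construct the reconstruction kernel whose Fourier transform is the convolution of a dilated $\eta$ with the indicator of $[-c,c]^d$, verify the same two spectral properties via the identities $c-q=b/2$ and $c+q=1/(2h)$, and then conclude by a standard band-limited sampling argument. The only cosmetic difference is that the paper defines the smooth cutoff $\psi$ directly on the Fourier side and then applies the inverse Fourier transform (using the Fourier series expansion of $\hat f$ on $[-\tfrac1{2h},\tfrac1{2h}]^d$), whereas you define the kernel in physical space first and phrase the final step via Poisson summation; these are equivalent formulations of the same computation.
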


We prove this lemma, both for the sake of completeness, and since some of its ingredients are useful in the
proof of the next lemma.

\begin{proof}
Since $\supp(\hat{f}) \subseteq [-\frac{b}2,\frac{b}2]^d\subseteq [-\frac1{2h},\frac1{2h}]^d$ we have that
$$
\hat{f}(\xi) = h^d \sum_{k\in\Z} f(hk) e^{-2\pi ihk\xi}.
$$
Suppose that $\psi\in C(\R^d)$ is such that $\psi(\xi)=1$ for $\xi\in [-\frac{b}2,\frac{b}2]^d$ and 
$\psi(\xi)=0$ for $\xi\notin [-\frac1{2h},\frac1{2h}]^d$, then it further holds that 
\begin{equation}\label{eq:expfhatxi}
\hat{f}(\xi) = \hat{f}(\xi)\cdot \psi(\xi) = h^d \sum_{k\in\Z^d} f(hk) e^{-2\pi ihk\xi}\psi(\xi), \quad \xi \in\R^d
\end{equation}
where we used that $\hat{f}$ is supported in $[-\frac{b}2,\frac{b}2]^d$.

Next, we specify $\psi$. With $\eta_q=\frac1{q^d} \eta(x/q)$ we define
$$
\psi(\xi):=(\eta_q \ast \chi_{[-c,c]^d})(\xi) = \int_{[-c,c]^d} \eta_q(t-\xi)\,\mbox{d}t,
$$
and observe that 
\begin{itemize}[--]
\item $\psi\in C(\R^d)$ and non-negative,
\item since $\supp(\eta_q)\subseteq[-q,q]^d$ and since $q+c\le \frac{1}{2h}$ we have that 
$$
\supp(\psi) \subseteq [-q,q]^d + [-c,c]^d = [-(q+c),q+c]^d \subseteq [-\frac{1}{2h},\frac1{2h}]^d;
$$
in particular, it holds that $\psi(\xi)=0$ for $\xi \notin [-\frac{1}{2h},\frac1{2h}]^d$.
\item since $c = \frac{b}2+q$ it holds for all 
 $\xi\in [-\frac{b}2,\frac{b}2]^d$ that
$$[-c,c]^d+\xi\supseteq [-q,q]^d \supseteq \supp(\eta_q), $$ which implies that 
$$
\psi(\xi)= \int_{[-c,c]^d} \eta_q(t-\xi) \,\mbox{d}t =\int_{[-c,c]^d+\xi} \eta_q(t) \,\mbox{d}t = \int_{\R^d}\eta_q(t)\,\mbox{d}t  = 1
$$ for all $\xi\in [-\frac{b}2,\frac{b}2]^d$.
\item the Fourier transform of $\psi$ is given by 
$$
\widehat{\psi}(x) = \widehat{\eta_q}(x) \widehat{\chi_{[-c,c]^d}}(x) = \widehat{\eta}(qx) \cdot (2c)^d\sinc(2cx).
$$
\end{itemize}
We have therefore verified the conditions from above, and applying the inverse Fourier transform to \eqref{eq:expfhatxi} gives that 
\begin{multline}
    f(x) = h^d \sum_{k\in\Z^d} f(hk) \left(\int_{\R^d} \psi(\xi)e^{2\pi i(x-hk)\xi} \,\mbox{d}\xi \right)
    = h^d \sum_{k\in \Z^d} f(hk) \widehat{\psi}(hk-x) \\
    = (2c h)^d \cdot \sum_{k\in\Z^d} f(hk) \widehat{\eta}\left(q(hk-x)\right) \sinc\left(2c(hk-x) \right).
\end{multline}
The statement follows now from $2ch=\frac12(1+hb)$.
\end{proof}

We will also need an approximate sampling result for almost bandlimited functions.

\begin{lemma}\label{lem:samplinginf}
    Let $s>0$, let $F\in L^2(\R^2)$ and suppose that there exist $K,C>0$ such that 
    $    |\widehat{F}(\xi)| \le K e^{-C|\xi|^2}$, $\xi\in\R^2$.
    Then it holds that 
    $$
    \|F\|_{L^\infty(\R^2)} \le 73 \cdot \|F\|_{\ell^\infty(s\Z^2)} + 233 \cdot \frac{K}{C} e^{-\frac{C}{16s^2}}.
    $$
\end{lemma}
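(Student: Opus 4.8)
The plan is to split $F$ into a genuinely band-limited piece, to which the oversampling formula of Lemma~\ref{lem:oversampling} applies with sampling step exactly $s$, plus a high-frequency remainder whose sup-norm is controlled directly by the Gaussian decay of $\widehat F$. Set $b:=\frac1{\sqrt2\,s}$ and write $F=F_\flat+F_\sharp$, where $\widehat{F_\flat}=\widehat F\cdot\chi_{[-b/2,b/2]^2}$ and $\widehat{F_\sharp}=\widehat F-\widehat{F_\flat}$, so that $F_\flat\in PW_b(\R^2)$ and (since $\widehat F\in L^1$) all three functions are bounded and continuous. The value $b=\frac1{\sqrt2 s}$ is forced by two constraints: we need $s<1/b$ so that Lemma~\ref{lem:oversampling} is usable with $h=s$, and we need $b^2/8=\frac1{16s^2}$ so that the tail estimate below produces precisely the exponent $e^{-C/(16s^2)}$.

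For the high-frequency part, $\|F_\sharp\|_{L^\infty}\le\|\widehat{F_\sharp}\|_{L^1}$, and on $\{\|\xi\|_\infty>b/2\}$ one has $|\xi|^2\ge b^2/4$, hence $e^{-C|\xi|^2}\le e^{-Cb^2/8}\,e^{-C|\xi|^2/2}$. Integrating the hypothesis $|\widehat F(\xi)|\le Ke^{-C|\xi|^2}$ over this region gives
\[
\|F_\sharp\|_{L^\infty}\le K\,e^{-Cb^2/8}\int_{\R^2}e^{-\frac C2|\xi|^2}\,\mbox{d}\xi=\frac{2\pi K}{C}\,e^{-\frac C{16s^2}},
\]
which is the origin of the $\tfrac KC$-term in the statement.

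For the low-frequency part, apply Lemma~\ref{lem:oversampling} with $d=2$, $h=s$, $b=\frac1{\sqrt2 s}$ and $\eta$ a fixed non-negative function supported in $[-1,1]^2$ with $\int\eta=1$ (for instance a normalized indicator or tent function, so that $\widehat\eta$ has at least $|x|^{-1}$ decay in each variable). One checks the hypotheses: $hb=\tfrac1{\sqrt2}<1$, hence $h<1/b$, and $q+c=\tfrac14\bigl(\tfrac1h-b\bigr)+\tfrac14\bigl(\tfrac1h+b\bigr)=\tfrac1{2s}=\tfrac1{2h}$, which is exactly the relation used in the proof of that lemma; the prefactor is $\bigl(\tfrac{1+hb}2\bigr)^2=\bigl(\tfrac{2+\sqrt2}4\bigr)^2$. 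Taking absolute values in the reconstruction identity and using $|F_\flat(sk)|\le\|F_\flat\|_{\ell^\infty(s\Z^2)}$ yields $\|F_\flat\|_{L^\infty}\le\bigl(\tfrac{2+\sqrt2}4\bigr)^2 M\,\|F_\flat\|_{\ell^\infty(s\Z^2)}$, where
\[
M:=\sup_{x\in\R^2}\ \sum_{k\in\Z^2}\bigl|\widehat\eta\bigl(q(sk-x)\bigr)\bigr|\,\bigl|\sinc\bigl(2c(sk-x)\bigr)\bigr|.
\]
The summand factors over the two coordinates, so $M=\bigl(\sup_{t\in\R}g(t)\bigr)^2$ with $g$ the corresponding $1$-periodic one-dimensional sum; using the envelope bound $|\sinc(y)|\le\min\{1,(\pi|y|)^{-1}\}$ together with $qs\cdot 2cs=\tfrac1{16}$, one splits $g(t)$ into the finitely many terms with $|k-t|\lesssim 1$ (each $\le1$) and a tail dominated by a constant times $\sum_{|k-t|>1}(k-t)^{-2}$, giving an explicit numerical bound for $M$; only a crude such bound is needed, since the target constants $73$ and $233$ leave a large margin.

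Finally, combine the pieces: since $F_\flat(sk)=F(sk)-F_\sharp(sk)$ we have $\|F_\flat\|_{\ell^\infty(s\Z^2)}\le\|F\|_{\ell^\infty(s\Z^2)}+\|F_\sharp\|_{L^\infty}$, so, with $C_1:=\bigl(\tfrac{2+\sqrt2}4\bigr)^2 M$,
\[
\|F\|_{L^\infty}\le\|F_\flat\|_{L^\infty}+\|F_\sharp\|_{L^\infty}\le C_1\|F\|_{\ell^\infty(s\Z^2)}+(C_1+1)\|F_\sharp\|_{L^\infty},
\]
and substituting the tail bound $\|F_\sharp\|_{L^\infty}\le\tfrac{2\pi K}{C}e^{-C/(16s^2)}$ together with the numerical estimate for $M$ yields the asserted inequality. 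The main obstacle is the quantitative control of $M$: one must estimate the $\sinc$-product sum carefully enough to certify it stays below the threshold imposed by the chosen final constants, and verify that the parameter constraints of Lemma~\ref{lem:oversampling} genuinely hold — in particular $q+c=\tfrac1{2h}$ is satisfied only because the frequency cutoff is a cube of the precise side length $b=\tfrac1{\sqrt2 s}$.
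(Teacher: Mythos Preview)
Your approach is essentially the same as the paper's: split $F$ into a band-limited part plus a high-frequency tail, apply the oversampling lemma to the former, and control the latter via the Gaussian decay of $\widehat F$. The paper chooses $b=\tfrac{1}{2s}$ (not $\tfrac{1}{\sqrt2\,s}$) and integrates the tail directly to get the exponent $-Cb^2/4=-C/(16s^2)$, then takes $\eta=\chi_{[-1/2,1/2]^2}*\chi_{[-1/2,1/2]^2}$ and carries out the kernel-sum estimate explicitly via $|\sinc(y)|\le 1.2/(1+|y|)$ to land exactly on the constant $73$. Two small inaccuracies in your write-up: the identity $q+c=\tfrac{1}{2h}$ holds for \emph{every} admissible $b$ by the very definition of $q,c$, so it is not a constraint tied to your particular cutoff; and your value of $b$ is not ``forced'' but is an artifact of your slightly wasteful tail splitting $e^{-C|\xi|^2}=e^{-C|\xi|^2/2}\cdot e^{-C|\xi|^2/2}$---the paper's direct radial integration shows $b=\tfrac{1}{2s}$ works just as well.
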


\begin{proof}
    With $b=\frac1{2s}$, we introduce the function 
    $$
    F_b := \ft^{-1}(\widehat{F}\cdot \chi_{[-\frac{b}2,\frac{b}2]^2}),
    $$
    which is the projection of $F$ onto $PW_b(\R^2)$.
    Furthermore, let $h=\frac1{2b}=s$ and 
    $
    \eta=\chi_{[-\frac12,\frac12]^2} \ast \chi_{[-\frac12,\frac12]^2}.
    $ 
    Notice that $\eta\in C(\R^2)$ is non-negative and $\displaystyle\int_{\R^2} \eta(x)\mbox{d}x=1$.
    We apply Lemma \ref{lem:oversampling} to $F_b$: 
    By taking into account that 
    $$
    c=\frac14\left(\frac1{h}+b\right) = \frac34 b \quad \text{and}\quad q = \frac14\left(\frac1{h}-b\right) = \frac14 b
    $$
    it follows -- since $1+hb=\frac32$ -- that
    $$
    F_b(x) = \frac9{16} \sum_{k\in\Z^2} F_b \left(\frac{k}{2b} \right) \cdot \widehat{\eta}\left(\frac{k}8-\frac{bx}4 \right) \cdot \sinc\left(\frac34 k-\frac{3bx}{2} \right).
    $$
    Since $\widehat{\eta}(\xi) =\sinc^2(\xi)$ and $s=\frac1{2b}$ we can further rewrite the above identity as 
    \begin{align*}
    F_b(x) &= \frac9{16} \sum_{k\in\Z^2} F_b \left(sk \right) \cdot \sinc^2 \left(\frac{x-sk}{4s}\right) \cdot \sinc\left( \frac{3(x-sk)}{4s}\right)\\
    &= \sum_{k\in\Z^2} F_b(sk) \rho(x-sk),
    \end{align*}
    where $\rho(x):=\frac{9}{16}  \cdot \sinc^2\left( \frac{x}{4s}\right) \cdot \sinc\left(\frac{3x}{4s}\right).$
    We apply the triangle inequality and get 
    $$
        \|F\|_{L^\infty} \le \|F_b\|_{L^\infty} + \|F-F_b\|_{L^\infty}.
    $$
    We make use of the assumption on the Fourier decay and get that the approximation error is bounded by
    \begin{multline}
        \|F-F_b\|_{L^\infty} \le \|\ft(F-F_b)\|_{L^1} = \int_{\R^2\setminus[-\frac{b}2,\frac{b}2]^2} |\widehat{F}(\xi)|\,\mbox{d}\xi \\
        \le K \int_{\R^2\setminus[-\frac{b}2,\frac{b}2]^2} e^{-C|\xi|^2} \,\mbox{d}\xi 
        \le K \int_{\frac{b}2}^\infty e^{-C r^2} 2\pi r\,\mbox{d}r = \frac{\pi K}{C} e^{-\frac{C b^2}4}.
    \end{multline}
    Thus, we can bound
    \begin{multline}
        \|F_b\|_{L^\infty} \le \|\sum_{k\in\Z^2} F(sk)\rho(\cdot-sk)\|_{L^\infty} + \|\sum_{k\in\Z^2} [F(sk)-F_b(sk)] \rho(\cdot-sk)\|_{L^\infty}\\
        \le \left(\|F\|_{\ell^\infty(s\Z^2)} +  \frac{\pi K}{C} e^{-\frac{C b^2}4} \right) \cdot \|\sum_{k\in\Z^2} T_{sk}|\rho|\|_{L^\infty}.
    \end{multline}
    It remains to bound the last term involving $\rho$. To do this note that the univariate $\sinc$ satisfies the pointwise bound $|\sinc(x)| \le \frac{1.2}{1+|x|}$.
    As a consequence, 
    \begin{equation}
    |\rho(x)| \le \frac{9}{16} \cdot \frac{1.2^3}{\left(1+\frac{|x_1|}{4s}\right)^3} \cdot \frac{1.2^3}{\left(1+\frac{|x_2|}{4s}\right)^3} 
    \le \frac{(4.4\cdot s)^6}{(4s+|x_1|)^3 (4s+|x_2|)^3}.
    \end{equation}
    This implies that
    \begin{equation}
        \sup_{x\in\R^2} \left(\sum_{k\in\Z^2} |\rho(x-sk)|\right) \le (4.4\cdot s)^6 \cdot \sup_{x\in\R} \left( \sum_{k\in\Z} \frac1{(4s+|x-sk|)^3}\right)^2.
    \end{equation}
    By periodicity the function inside the brackets attains its maximum in the interval $x\in[0,s]$.
    For such $x$ we have that 
    \begin{eqnarray*}
      \sum_{k\in\Z} \frac1{(4s+|x-sk|)^3} &\le& \frac1{4^3s^3} +2 \sum_{k=1}^\infty \frac1{(3+k)^3 s^3}\\
    &=& \frac1{s^3} \left(4^{-3} + 2\sum_{k=4}^\infty k^{-3} \right) < 0.1 \cdot s^{-3}.
    \end{eqnarray*}
    With this we obtain that 
    $$
    \left\|\sum_{k\in\Z^2} T_{sk}|\rho|\right\|_{L^\infty} \le 4.4^6 \cdot 0.01 <73.
    $$
    Finally, collecting the estimates leads to 
    $$
    \|F\|_{L^\infty} \le 73 \|F\|_{\ell^\infty(s\Z^2)} + 74\pi \frac{K}{C}e^{-\frac{Cb^2}4},
    $$
    which implies the claim.
\end{proof}

\subsection{Controlling holomorphic functions in terms of their boundary values}
We begin with recalling a classical result from complex analysis.

\begin{theorem}[Hadamard's three line theorem]\label{thm:hadamard3}
Suppose that $f$ is bounded on the strip $\{x+iy:\, 0\le y\le b\}$ and holomorphic in the interior. 
Let $m_f(t):= \sup_{x\in\R} |f(x+it)|$. Then it holds for all $t\in [0,b]$ that 
$$
m_f(t) \le m_f(0)^{1-\frac{t}{b}} \cdot m_f(b)^{\frac{t}{b}}.
$$
\end{theorem}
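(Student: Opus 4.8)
The plan is to reduce the three line inequality to one application of the maximum modulus principle, after dividing $f$ by an explicit nowhere-vanishing entire function whose modulus on each horizontal line $\Im z=t$ equals the desired right-hand side. Write $M_0:=m_f(0)$ and $M_b:=m_f(b)$, and fix a parameter $\delta>0$ which will be sent to $0$ at the very end; keeping $\delta>0$ also disposes of the degenerate cases $M_0=0$ or $M_b=0$ without a separate argument.

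First I would introduce the comparison function
$$
\phi_\delta(z):=(M_0+\delta)^{\,1+iz/b}\,(M_b+\delta)^{\,-iz/b}
=\exp\!\Bigl(\log(M_0+\delta)+\tfrac{iz}{b}\bigl(\log(M_0+\delta)-\log(M_b+\delta)\bigr)\Bigr),
$$
which is entire and never $0$. A one-line computation with $z=x+it$ gives $|\phi_\delta(x+it)|=(M_0+\delta)^{\,1-t/b}(M_b+\delta)^{\,t/b}$, depending only on $t\in[0,b]$; in particular $\delta\le|\phi_\delta|\le\max(M_0,M_b)+\delta$ on the whole strip. Consequently $h:=f/\phi_\delta$ is holomorphic in the open strip, bounded on the closed strip, and satisfies $|h|\le1$ on both boundary lines $\Im z=0$ and $\Im z=b$ (using $|f|\le M_0$ on the first and $|f|\le M_b$ on the second).

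It then remains to prove the Phragmén--Lindelöf-type statement that a bounded holomorphic function on the strip which is $\le1$ on the boundary is $\le1$ throughout. For $\epsilon>0$ I would set $h_\epsilon(z):=h(z)\,e^{-\epsilon z^2}$; since $|e^{-\epsilon z^2}|=e^{-\epsilon(x^2-t^2)}$, one has $|h_\epsilon|\le e^{\epsilon b^2}$ on the two boundary lines, while on the vertical sides $\{|x|=R\}$ of the rectangle $[-R,R]\times[0,b]$ one has $|h_\epsilon|\le\|h\|_{\infty}\,e^{\epsilon b^2}e^{-\epsilon R^2}$. Choosing $R$ large enough that the latter is $\le e^{\epsilon b^2}$ and applying the maximum modulus principle on this rectangle gives $|h_\epsilon|\le e^{\epsilon b^2}$ on the whole strip; letting $\epsilon\to0^+$ yields $|h|\le1$, i.e. $|f(x+it)|\le|\phi_\delta(x+it)|=(M_0+\delta)^{\,1-t/b}(M_b+\delta)^{\,t/b}$. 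Taking the supremum over $x\in\R$ and then $\delta\to0^+$ gives $m_f(t)\le m_f(0)^{1-t/b}m_f(b)^{t/b}$.

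The only genuinely delicate point is the Phragmén--Lindelöf step: the conclusion is false without the boundedness hypothesis, and the Gaussian regulariser $e^{-\epsilon z^2}$ is exactly what converts global boundedness of $h$ into decay as $|\Re z|\to\infty$, so that the ordinary maximum principle can be invoked on an exhausting family of rectangles. Everything else is routine bookkeeping, and the choice of the exponent $1+iz/b$ in $\phi_\delta$ (rather than $1-z/b$) is dictated by the requirement that $|\phi_\delta|$ vary in the $\Im z$ direction.
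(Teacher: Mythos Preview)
Your proof is correct and follows the standard textbook route (comparison function plus a Phragm\'en--Lindel\"of regulariser). The paper, however, does not prove this statement at all: it merely recalls Hadamard's three line theorem as a classical result and then uses it as a black box to derive the several-complex-variables version in the subsequent lemma. So there is nothing to compare against --- your argument simply supplies a proof the paper chose to omit.
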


Recall that for $F$ holomorphic on $\C^d$ (or only on a tube $\{|\Im(y)|\le b\}$) we have defined
$$
M_F(t)=\sup \big\{ |F(z)|: \, z\in \C^d, \, |\im(z)|=t\big\}.
$$
When $d=1$, {\it i.e.} $F$ is a function of one complex variable, $M_F(t)=\max\bigl(m_F(t),m_F(-t)\bigr)$ and Hadamard's three line theorem is easily seen to be also
valid with $M_F$ replacing $m_F$. 
We will require a version of this fact for several complex variables for which we did not find a convenient reference:

\begin{lemma}\label{lem:hadamardscv}
Suppose that $F(z)$ is bounded on $\{x+iy\in\C^d: |\Im(y)|\le b\}$ and holomorphic in the interior. 
It holds for all $t\in [0,b]$ that 
$$
M_F(t) \le M_F(0)^{1-\frac{t}{b}} \, M_F(b)^{\frac{t}{b}}.
$$
\end{lemma}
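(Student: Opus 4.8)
The plan is to reduce the several–variable statement to the one–variable Hadamard three line theorem (Theorem \ref{thm:hadamard3}) by restricting $F$ to suitably chosen complex lines. We may assume $b>0$. Fix $t\in[0,b]$; the case $t=0$ is trivial, so assume also $t>0$. It suffices to bound $|F(z_0)|$ for an arbitrary point $z_0=x_0+iy_0\in\C^d$ with $x_0,y_0\in\R^d$ and $|y_0|=t$, and then take the supremum over all such $z_0$.

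First I set $u:=y_0/t\in\R^d$, a \emph{real} unit vector, and define the one–variable function $g(\zeta):=F(x_0+\zeta u)$, $\zeta\in\C$. For $\zeta=\xi+i\sigma$ we have $x_0+\zeta u=(x_0+\xi u)+i\sigma u$, so that $|\Im(x_0+\zeta u)|=|\sigma|\,|u|=|\sigma|$. Consequently $g$ is holomorphic on the strip $\{0<\Im\zeta<b\}$ (restriction of a holomorphic function to a complex line) and bounded on $\{0\le\Im\zeta\le b\}$, since there the line stays inside the tube $\{|\Im z|\le b\}$ on which $F$ is assumed bounded. Moreover, for $\sigma=0$ the point $x_0+\xi u$ is real, whence $m_g(0)=\sup_{\xi\in\R}|g(\xi)|\le M_F(0)$; and for $\sigma=b$ we have $|\Im(x_0+\xi u+ibu)|=b$, whence $m_g(b)\le M_F(b)$.

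Applying Theorem \ref{thm:hadamard3} to $g$ now yields, for our $t\in(0,b]$,
$$
|F(z_0)|=|g(it)|\le m_g(t)\le m_g(0)^{1-\frac tb}\,m_g(b)^{\frac tb}\le M_F(0)^{1-\frac tb}\,M_F(b)^{\frac tb},
$$
and taking the supremum over all $z_0$ with $|\Im z_0|=t$ gives the claim. (If $M_F(0)=0$ then $g$ vanishes on $\R$, hence $g\equiv0$, and the bound is the trivial $0\le0$, so no convention for $0^0$ is needed.) There is no serious obstacle here; the one point that must be handled correctly — and which is the crux of the reduction — is to parametrise the complex line through $z_0$ by a real direction vector $u$, so that $|\Im(x_0+\zeta u)|$ equals $|\Im\zeta|$ and hence $M_F$ genuinely controls $\sup_\xi|g(\xi+i\sigma)|$ on each horizontal line of the strip; everything else is routine.
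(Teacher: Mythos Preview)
Your proof is correct and is essentially the same argument as the paper's: both restrict $F$ to the complex line through the given point in the direction of the real unit vector $\Im(z_0)/t$ and then apply the one-variable three line theorem. The paper writes this as $f(z)=F\bigl(\tfrac{\Im(\zeta)}{t}z+\Re(\zeta)\bigr)$, which is exactly your $g(\zeta)=F(x_0+\zeta u)$ with $u=y_0/t$.
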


\begin{remark}
    We are going to use this inequality in the following way: the function $F$ we will consider belongs to some class
    of holomorphic functions for which an upper bound of $M_F(t)$ is known. Then if $M_F(0)$ is small, $M_F(0)\leq \eps$,
    we obtain that $M_F(t)\leq M_F(b)^{\frac{t}{b}}\eps^{1-\frac{t}{b}}$ which shows propagation of smallness away from $\R^d$.
\end{remark}

\begin{proof}
    The statement is trivial if $t\in\{0,b\}$ is one of the endpoints.
    Let us fix $t\in (0,b)$  and pick an arbitrary $\zeta\in\C^d$ with the property that $|\im(\zeta)|=t$.
    It suffices to show that 
    $$
    |F(\zeta)|\le M_F(0)^{1-\frac{t}{b}} \cdot M_F(b)^{\frac{t}{b}}.
    $$
    Consider the function $f$ defined by
    $$
    f(z)=F\left( \frac{\im(\zeta)}{t} z + \re(\zeta)\right),\quad z\in\C
    $$
    which is bounded on the strip $\{z:\, 0 \le \im(z) \le b\}$, holomorphic in the interior and satisfies $f(it)=F(\zeta)$.
    Moreover, if $z=x+i0$ is real-valued we have that $|f(z)|\le M(0)$ and if $z=x+ib$ lies on the top boundary of the strip then
    $$
    \left|\im \left(\frac{\im(\zeta)}{t} z + \re(\zeta)\right)   \right| = b,
    $$
    which implies that $|f(x+ib)|\le M_F(b)$. 
    Thus, applying Theorem \ref{thm:hadamard3} on $f$ gives that
    $$
    |F(\zeta)|=|f(it)|\le M_F(0)^{1-\frac{t}{b}} \cdot M_F(b)^{\frac{t}{b}},
    $$
    as announced.
\end{proof}

\subsection{Vertex weighted graphs}\label{sec:vwg}

As opposed to the standard notion of a weighted graph where the edges are furnished with weights, the case where vertices are weighted seems to be scarcely considered. One exception in the literature is the paper by Chung and Langlands \cite{chung:combinatorial}. We follow the introductory part of their 
paper to get familiar with the concept of vertex-weighted graphs.

We begin with an undirected graph $G=(V,E)$ which we assume to have no self-loops.
If vertices $u,v$ are neighbors (that is, if $(u,v)\in E$) we will simply write $u\sim v$.
Let us further assume that to each vertex $v\in V$ there is an associated weight denoted by $\alpha_v\ge 0$. 
The Laplacian of the vertex-weighted graph $G=(V,E,\alpha)$ will be denoted by $\mathcal{L}^G$ and is defined by the matrix 
\begin{equation}
    \mathcal{L}^G(u,v) = 
    \begin{cases}
        \sum_{z\sim u} \alpha_z,\quad & u=v\\
        -\sqrt{\alpha_u \alpha_v}, \quad & u\sim v\\
        0, \quad & \text{otherwise}.
    \end{cases}
\end{equation}
To $G$ one then associates an unweighted graph $G'=(V,E')$ with $E'$ defined by
$$
(u,v)\in E' \quad\mbox{if and only if}  \quad (u,v)\in E \quad \text{and}\quad \alpha_u \alpha_v >0. 
$$
We will say that the vertex weighted-graph $G=(V,E,\alpha)$ is \emph{connected} if the unweighted graph $G'=(V,E')$ is connected in the usual sense.

\begin{lemma}\label{lem:graphconnection}
    Let $G=(V,E,\alpha)$ be a vertex-weighted graph, let $\mathcal{L}^G$ be its Laplacian, and let $N=|V|$.
    Then it holds that $\mathcal{L}^G$ is positive semi-definite with eigenvalues $0=\lambda_1\le \lambda_2\le \ldots\le \lambda_N$.
    Furthermore, it holds that 
    \begin{equation}
    \lambda_2 = \min_{\substack{g\in\C^N\\ \sum_j g_j\alpha_j=0}} \frac{\frac12 \sum\limits_{i=1}^N \sum\limits_{k\sim i} \alpha_i\alpha_k |g_i-g_k|^2}{\sum\limits_{i=1}^N \alpha_i |g_i|^2 }.
    \end{equation}
    In particular, $G$ is connected if and only if $\lambda_2>0$.
\end{lemma}

\begin{proof}
    The proof is mainly a reproduction of arguments from \cite{chung:combinatorial}.
    We assume w.l.o.g. that $V=\{1,\ldots,N\}$.
    Further, define a matrix $B$ on $V\times E$ by 
    $$
    B(j,e)= 
    \begin{cases}
        \sqrt{\alpha_i}, & e=(i,j), \,i\le j\\
        -\sqrt{\alpha_i}, & e=(i,j), \, i> j\\
        0 & \text{otherwise.}
    \end{cases}
    $$
    We claim that $BB^*= \mathcal{L}^G$. For entries in the diagonal we have that
    $$
    (BB^*)_{j,j} = \sum_{i\sim j, i\le j} \sqrt{\alpha_i}^2 + \sum_{i\sim j, i>j} (-\sqrt{\alpha_i})^2 =\mathcal{L}^G_{j,j}.
    $$
    For off-diagonal entries
    $
    (BB^*)_{j,k} = \sum_{e\in E} B(j,e)B(k,e)
    $
    note that by definition 
    $$
    e\in E: \, B(j,e)B(k,e)\neq 0  \quad \Rightarrow \quad e=(k,j).
    $$
    As a consequence, when $j\not=k$
    $$
    (BB^*)_{j,k} = \begin{cases}
        -\sqrt{\alpha_j \alpha_k}, & j\sim k\\
        0, &\text{otherwise.}
    \end{cases}
    $$
    and it follows that indeed $BB^*=\mathcal{L}^G$, which implies that $\mathcal{L}^G\succeq 0$.
    In particular, $\mathcal{L}^G$ has an orthonormal basis of eigenvectors and the corresponding
    eigenvalues are all non-negative, $0\le\lambda_1\le \lambda_2\le \ldots\le \lambda_N$.

    Next consider the vector $\psi=(\sqrt{\alpha_1},\ldots,\sqrt{\alpha_N})^T$ and notice that, for
    $k\in\{1,\ldots,N\}$ we have that
    \begin{equation}
        (\mathcal{L}^G \psi)_k= \sqrt{\alpha_k} \left(\sum_{i\sim k} \alpha_i \right)- 
        \sum_{i\sim k} \sqrt{\alpha_i \alpha_k} \cdot \sqrt{\alpha_i} = 0.
    \end{equation}
    So $\psi$ is an eigenvector for the eigenvalue $\lambda_1=0$.

    To prove the last part, let us assume that all the weights $\alpha_i$ are strictly positive for now.
    Let us denote $W=\diag(\alpha_i)$ and, let $L$ be defined by virtue of
    $$
    L_{i,j}= 
    \begin{cases}
        \sum_{k\sim i} \alpha_k, & j=i\\
        -\alpha_j, & i\sim j\\
        0 &\text{otherwise.}
    \end{cases}
    $$
    For $f\in\C^N$ and $i\in\{1,\ldots,N\}$ it holds that
    $$
    (Lf)_i =  (\sum_{k\sim i} \alpha_k)f_i - \sum_{j\sim i} \alpha_j f_j = \sum_{k\sim i} \alpha_k(f_i-f_k).
    $$
    Furthermore, it is easy to see that $L=W^{-1/2}\mathcal{L}^G W^{1/2}$.
    
    By expressing the spectral gap in terms of the Rayleigh coefficient, and by substituting $g=W^{-1/2}f$
    we get that 
    \begin{equation}
    \lambda_2= \min\limits_{\substack{f\in\C^N\\\langle f,\psi \rangle=0}} \frac{\langle f,\mathcal{L}^G f\rangle}{\langle f,f\rangle}
    = 
    \min\limits_{\substack{g\in\C^N\\ \sum_j g_j \alpha_j = 0}} \frac{\langle Wg,Lg\rangle}{\langle Wg,g\rangle}
    \end{equation}
    We further rewrite the numerator
    \begin{eqnarray*}
        \langle Wg, Lg\rangle &=& \sum_{i=1}^N \alpha_i g_i \cdot \left(\sum_{k\sim i} \alpha_k (\bar{g_i}-\bar{g_k}) \right)\\
        &=& 
        \sum_{i=1}^N \sum_{k\sim i}\alpha_i \alpha_k |g_i-g_k|^2 + \sum_{i=1}^N \sum_{k\sim i} \alpha_i\alpha_k g_k(\bar{g_i}-\bar{g_k})\\
       & &= \frac12 \sum_{i=1}^N \sum_{k\sim i}\alpha_i \alpha_k |g_i - g_k|^2\\
       &&+ \frac12 \sum_{i=1}^N \sum_{k\sim i}\alpha_i \alpha_k \left(|g_i|^2 -2\re(\bar{g_i}g_k) + |g_k|^2 + 2g_k\bar{g_i} -2|g_k|^2 \right)
    \end{eqnarray*}
    Let us denote the double sum in the last line by $S$. Since we know that the whole expression is non-negative, and thus in particular real-valued we get that 
    $$
    S=\frac{S+\overline{S}}{2}=\frac12 \sum_{i=1}^N \sum_{k\sim i} \alpha_i\alpha_k (|g_i|^2-|g_k|^2).
    $$
    Since each edge $e=(i,k)$ appears precisely twice, and with opposite orientation in the sum, it follows that $S=0$. With this we get that indeed 
    \begin{eqnarray*}
      \lambda_2 &=& \min_{\substack{g\in\C^N\\ \sum_j g_j\alpha_j=0}} \frac{\frac12 \sum\limits_{j=1}^N \sum\limits_{k\sim j} \alpha_j\alpha_k |g_j-g_k|^2}{\sum\limits_{j=1}^N \alpha_j |g_j|^2 }\\
      &=&\min\left\{\frac12 \sum\limits_{j=1}^N \sum\limits_{k\sim j} \alpha_j\alpha_k |g_j-g_k|^2\,:\right.\\
      &&\qquad\qquad\left. g\in\C^N\mbox{ with }\sum\limits_{j=1}^N \alpha_j |g_j|^2=1\mbox{ and }\sum\limits_{i=1}^N g_j\alpha_j=0\right\}
    \end{eqnarray*}
    The general case, where some of the weights are allowed to vanish then follows from a continuity argument.\\
    
    Let $G$ be connected so that $\alpha_j\alpha_k>0$ when $j\sim k$ and assume towards a contradiction that $\lambda_2=0$. Then it follows that a minimizer $g$ of the above problem satisfies
    $$
    \sum\limits_{k\sim j}\alpha_k |g_j-g_k|^2=0,\quad  j\in \{1,\ldots,N\}
    $$
    and must thus be a multiple of $(1,\ldots,1)^T$. This contradicts the constraint $\sum_j g_j\alpha_j=0$. Hence connectedness of $G$ implies $\lambda_2>0$.\\
    On the other hand, if $G$ is disconnected, then there exists a non-trivial component $\emptyset\subsetneq A \subsetneq V$ and one can construct an admissible $g$ by setting $g_i=1$ for $i\in A$ and $g_j=-b$ for $j\in V\setminus A$ where $b$ satisfies
    $$
    \sum_{i\in A} \alpha_i = b \sum_{i\in V\setminus A} \alpha_i,
    $$
    which achieves that the ratio vanishes and therefore, that $\lambda_2=0$.
\end{proof}

\subsection{Rank one matrix completion}\label{sec:matrixcompletion}
Matrix completion is concerned with determining a matrix $X$ given only some of its entries.
Clearly, this requires some prior knowledge on the structure of the matrix to be recovered.
We consider here the case where $X\succeq 0$ is positive semi-definite and has rank (at most) one.
That is, $X$ is of the form 
$$
X = \mathbf{x}\otimes \bar{\mathbf{x}} = \mathbf{x}\mathbf{x}^H,
$$
with $\mathbf{x}\in\C^d$ the generating vector. Note that $X(i,i)=|x_i|^2$ so we assume that we know the diagonal of $X$.
Let $E\subsetneq \{1,\ldots,d\}^2\setminus D$ ($D=\{(1,1),\ldots,(d,d)\}$) denote 
the index set of the off-diagonal information on the matrix that is available. The problem we are interested in is the following:
\begin{center}
    Given $\{X_{k,\ell}$, $(k,\ell)\in E\cup D\}$, find $X$ (or $\mathbf{x}$ up to a phase factor).
\end{center}

\smallskip

The purpose of this section is to present (a slight modification of) a result due to Demanet and Jugnon \cite{demanet17}, which relates the matrix recovery 
problem to a vertex weighted graph associated with the restriction of $X$ to $D\cup E$, and reveals that any positive semidefinite matrix $Y\succeq 0$ with 
$$
Y_{k,\ell} \approx X_{k,\ell}, \quad (k,\ell)\in E
$$
is already a good approximation for $X$ provided that the respective graph has good connectivity as quantified by its spectral gap.

\begin{theorem}\label{thm:demanetmod}
    Let $V=\{1,\ldots,d\}$, $D=\{(1,1),\ldots,(d,d)\}$ and $E\subset (V\times V)\setminus D$ so that 
    $(V,E)$ is an undirected graph with no self loops.

    For $x\in \C^d\setminus\{0\}$, let $\alpha = (|x_i|^2)_{i=1}^d$, let $(V,E,\alpha)$ be the resulting vertex weighted graph and let $\lambda_2$ denote the spectral gap of $(V,E,\alpha)$, which we assume to be $>0$.
    
    Let  $B=\sum_{(k,\ell)\in V^2} |\mathcal{L}^G_{k,\ell}|$, and let $\varepsilon >0$ such that 
    \begin{equation}\label{eq:asspteps}
        \varepsilon \le \min\left\{ \frac{|x|^2 }{d^2 }, \frac{|x|^2\lambda_2}{12 B}\right\}.
    \end{equation}
    Further, let $Y\in\C^{d\times d}$, $Y\succeq 0$ that satisfies 
    \begin{equation}\label{est:YXclose}
        |Y_{k,\ell} - x_k\overline{x_\ell}| \le \varepsilon, \quad (k,\ell)\in D\cup E.
    \end{equation}
    Then it holds that 
    $$
    \|Y-x\otimes \overline{x}\|_F \le \sqrt{\frac{3|x|^2 B}{\lambda_2}} \cdot \sqrt\varepsilon.
    $$
    Furthermore, $Y$ has a unique top eigenpair $(\eta,v)$ 
    and
    $$
    \min_{\theta\in\R} |\sqrt{\trace(Y)}v - e^{i\theta}x| \le \left( 1+2\sqrt6 \sqrt{\frac{B}{\lambda_2}}\right) \cdot \sqrt\varepsilon.
    $$
\end{theorem}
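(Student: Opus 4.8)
The plan is to follow, with the indicated modifications, the matrix-completion argument of Demanet and Jugnon \cite{demanet17}. Write $X=x\otimes\overline{x}$ and $H=Y-X$, a Hermitian matrix; since $D\subseteq D\cup E$, hypothesis \eqref{est:YXclose} already gives $|H_{ii}|\le\varepsilon$ for every $i$, and hence $|\trace(Y)-|x|^2|\le d\varepsilon$. I would then establish the three assertions in order: the Frobenius estimate on $H$, the non-degeneracy of the leading eigenpair, and finally the de-lifting bound.

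Step 1 (Frobenius bound). This is the analytic core and the only place where the spectral gap enters; it is the step I expect to be the main obstacle. Since $Y\succeq 0$ one may factor $Y=U^{H}U$ and work with feature vectors $u_1,\dots,u_d\in\C^d$ satisfying $Y_{k\ell}=\langle u_\ell,u_k\rangle$. The diagonal constraints give $\|u_k\|^2=\alpha_k+O(\varepsilon)$, while for an edge $k\sim\ell$ the near-equality $\langle u_\ell,u_k\rangle\approx x_k\overline{x_\ell}$ together with the magnitude data forces $u_k/x_k$ and $u_\ell/x_\ell$ to be close; expanding $\|u_k/x_k-u_\ell/x_\ell\|^2$ and clearing denominators one gets a bound of the form $\alpha_k\alpha_\ell\,\|u_k/x_k-u_\ell/x_\ell\|^2\lesssim\varepsilon\,(\sqrt{\alpha_k}+\sqrt{\alpha_\ell})^2$, whose sum over edges is exactly of order $\varepsilon B$. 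Setting $g_k$ equal to $u_k/x_k$ minus its $\alpha$-weighted mean (so that $\sum_k\alpha_k g_k=0$ holds in each complex coordinate), applying the Rayleigh-quotient description of $\lambda_2$ from Lemma \ref{lem:graphconnection} coordinatewise, and summing, one arrives at $\sum_k\alpha_k\|g_k\|^2\lesssim \varepsilon B/\lambda_2$. Writing $u_k=x_k(\overline{w}+g_k)$ with $w$ the common mean and expanding $Y_{k\ell}-X_{k\ell}$ in $g_k,g_\ell$ and the diagonal defect $|\,\|w\|^2-1\,|$ turns this into $\|H\|_F^2\lesssim |x|^2\,\varepsilon B/\lambda_2$, and a careful accounting of constants produces the factor $3$. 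The delicate points are that the vertex weights $\alpha$ must be tracked through every inequality, and that vertices with $\alpha_k=0$ — where \eqref{est:YXclose} only yields $\|u_k\|^2\le\varepsilon$ — have to be treated apart; as in \cite{demanet17} this is done by passing to the graph $G'$ that discards the edges meeting a zero weight and then reducing to the strictly positive case by continuity.

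Step 2 (leading eigenpair). Applying Weyl's inequality to $Y=X+H$ and using that $X$ has spectrum $\{|x|^2,0,\dots,0\}$, the ordered eigenvalues $\eta=\mu_1\ge\mu_2\ge\cdots$ of $Y$ satisfy $|\mu_1-|x|^2|\le\|H\|_F$ and $0\le\mu_2\le\|H\|_F$. Inserting the bound of Step 1 and the calibration \eqref{eq:asspteps} forces $\|H\|_F\le|x|^2/2$, whence $\mu_1>\mu_2$ and the leading eigenpair $(\eta,v)$ is unique. I would also record, for use below, two consequences: a Cauchy-interlacing comparison of $Y$ with its compression to $(\mathrm{span}\,x)^{\perp}$ (with $\Pi$ the corresponding orthogonal projection) — on which $X$ vanishes, so the compression equals $\Pi H\Pi$ — gives $\sum_{j\ge2}\mu_j^2\le\|\Pi H\Pi\|_F^2\le\|H\|_F^2$; and combining $\trace(Y)\le|x|^2+d\varepsilon$ with $\eta\ge|x|^2-\|H\|_F$ and the elementary consequence $d\varepsilon\le|x|\sqrt\varepsilon$ of \eqref{eq:asspteps} shows $0\le\trace(Y)-\eta=\sum_{j\ge2}\mu_j\le\|H\|_F+|x|\sqrt\varepsilon$.

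Step 3 (de-lifting). With $v$ the unit leading eigenvector I would first compare the rank-one matrices $\eta\,v\otimes\overline{v}$ and $X=x\otimes\overline{x}$. From $Y=\eta\,v\otimes\overline{v}+\sum_{j\ge2}\mu_j\,v_j\otimes\overline{v_j}$ and the Frobenius-orthogonality of the $v_j$, one has $\|\eta\,v\otimes\overline{v}-Y\|_F=(\sum_{j\ge2}\mu_j^2)^{1/2}\le\|H\|_F$, hence $\|\eta\,v\otimes\overline{v}-X\|_F\le 2\|H\|_F$. Then the elementary inequality $\min_\theta\|a-e^{i\theta}b\|^2\le 2\,\|a\otimes\overline{a}-b\otimes\overline{b}\|_F^2/(\|a\|^2+\|b\|^2)$ (obtained by expressing both sides through $\|a\|^2$, $\|b\|^2$ and $|\langle a,b\rangle|$), applied with $a=\sqrt{\eta}\,v$ and $b=x$, yields $\min_\theta\|\sqrt{\eta}\,v-e^{i\theta}x\|\lesssim\|H\|_F/|x|$. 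Finally $|\sqrt{\trace(Y)}-\sqrt{\eta}\,|\le(\trace(Y)-\eta)/(2\sqrt{\eta})\le(\|H\|_F+|x|\sqrt\varepsilon)/(2\sqrt{\eta})\lesssim\|H\|_F/|x|+\sqrt\varepsilon$, so by the triangle inequality $\min_\theta\|\sqrt{\trace(Y)}\,v-e^{i\theta}x\|\lesssim\|H\|_F/|x|+\sqrt\varepsilon$; substituting the Step-1 bound $\|H\|_F\le\sqrt{3|x|^2B/\lambda_2}\,\sqrt\varepsilon$ and keeping track of the numerical constants — the $\|H\|_F$ terms contributing the $2\sqrt6\,\sqrt{B/\lambda_2}$ and the $|x|\sqrt\varepsilon$ term the additive $1$ — gives the claimed estimate.
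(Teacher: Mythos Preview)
Your Step~1 takes a genuinely different route. The paper does not factor $Y$; instead it works directly with the phase-conjugated Laplacian $L=\Delta\mathcal{L}^G\Delta^*$ (so that $Lx=0$ and $L$ has the same spectrum as $\mathcal{L}^G$) and computes the single trace
\[
\langle Y,L\rangle_F=\langle Y-X,L\rangle_F\le\varepsilon\sum_{(k,\ell)}|L_{k,\ell}|=\varepsilon B,
\]
using only that $L$ is supported on $D\cup E$. Writing $c_k=\langle Y,v_k\otimes\overline{v_k}\rangle_F\ge0$ in the eigenbasis of $L$, this immediately gives $\sum_{k\ge2}c_k\le B\varepsilon/\lambda_2$, hence $c_1\ge\trace(Y)-B\varepsilon/\lambda_2$; then $\|Y-X\|_F^2\le\trace(Y)^2-2|x|^2c_1+|x|^4$ (from $\trace(Y^2)\le\trace(Y)^2$) yields the Frobenius bound with the constant $3$ in two lines. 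Your feature-vector argument can reach the same endpoint --- in fact $\sum_k\alpha_k\|g_k\|^2=\trace(Y)-c_1$ once one observes $x^HYx=\|\sum_kx_ku_k\|^2=|x|^4\|w\|^2$ --- but the entrywise expansion of $H_{k\ell}$ in $g_k,g_\ell$ and $\|w\|^2-1$ that you propose is a detour and makes the constant~$3$ hard to see. In Step~3 the paper also proceeds differently: it compares $v$ to $x$ via the intermediate $|x|v$ rather than $\sqrt{\eta}\,v$, invoking \cite[Lemma~2]{demanet17} for $\min_\theta\bigl||x|v-e^{i\theta}x\bigr|\le\tfrac{2\sqrt2}{|x|}\|H\|_2$ and then using $\bigl|\sqrt{\trace(Y)}-|x|\bigr|\le d\varepsilon/|x|\le\sqrt\varepsilon$, which delivers the constant $1+2\sqrt6\sqrt{B/\lambda_2}$ exactly; your detour through $\sqrt\eta\,v$ and the rank-one inequality picks up a slightly larger coefficient in front of $\sqrt{B/\lambda_2}$.
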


Note that the hypothesis of this theorem do not require to know $x$, only to know $x_k\overline{x_\ell}$ for $(k,\ell)\in D\cup E$. The theorem
states that if we find a matrix $Y\in\C^{d\times d}$, $Y\succeq 0$ whose $(k,\ell)$ entries with $(k,\ell)\in D\cup E$ approximate
well the data, then the top eigenvector provides a good approximation of $x$, up to a rescaling factor.

The result as well as its proof is a variation of Theorem 4 in \cite{demanet17}.
The modification consists in how we measure certain error terms. While we use $\ell^\infty$-norm in assumption \eqref{est:YXclose}, the original statement employs $\ell^1$-norm at this point.
\begin{proof}
    Throughout we denote $X=xx^*$.
    We introduce a matrix $L\in\C^{d\times d}$ by 
    $$
    L_{k,\ell} = 
    \begin{cases}
        \sum_{j\sim k} |x_j|^2, \quad & (k,\ell)\in D\\
        - x_k\overline{x_\ell}, \quad & (k,\ell)\in E\\
        0 \quad &\text{otherwise.}
    \end{cases}
    $$
    Let $\phi_k\in \R$ be such that $x_k = |x_k|e^{i\phi_k}$, $k\in V$ and let 
    $\Delta = \diag(e^{i\phi_1},\ldots,e^{i\phi_d}).$
    Note that 
    $\Delta\mathcal{L}^G \Delta^* = L$, that $L\succeq 0$ with same eigenvalues as the Laplacian $\mathcal{L}^G$, which we denote by $0=\lambda_1<\lambda_2\le \ldots\le \lambda_d$.
    Let $v_1,\ldots,v_d$ be the corresponding orthonormal basis of eigenvectors of $L$, so that 
    $$
    L = \sum_{k=1}^d \lambda_k v_k \otimes \overline{v_k}.
    $$
    We define for $k=1,\ldots,d$  numbers
    $
    c_k := \langle Y, v_k\otimes\overline{v_k}\rangle_F \ge 0
    $
    and notice that, $\langle Y,L\rangle_F = \sum_{k=1}^d \lambda_k c_k$, and     
    as $(v_k)_{k=1}^n$ is an o.n.b., that
    $$
    \sum_{k=1}^d c_k 
    = \langle Y, \sum_{k=1}^d v_k\otimes\overline{v_k}\rangle_F
    = \langle Y,I \rangle_F = \trace(Y).
    $$
    \\
    For all $k$ we have that 
    $$
    (L x)_k = \left(\sum_{j\sim k} |x_j|^2\right) x_k - \sum_{\ell\sim k} x_k \overline{x_\ell}x_\ell = 0, 
    $$
    i.e. $Lx =0$. Since by assumption $\lambda_2>0$ we conclude that $x=e^{i\theta}|x| v_1$ for suitable $\theta\in\R$. \\

    The idea of the proof is now to show that $(c_k)_{k=1}^d$ is 'front-heavy' in the sense that 
    $c_1\approx \trace(Y)$ and $c_2,\ldots,c_d\approx 0$, and deduce from this that $Y\approx X$.\\
    
    \smallskip
    On the one hand we have with \eqref{est:YXclose}  that
    $$\begin{aligned}
    \sum_{k=1}^d \lambda_k c_k &= \langle Y,L\rangle_F =\langle Y-X, L\rangle_F 
    \le \sum_{(k,\ell)\in V^2} |Y_{k,\ell}-x_k \overline{x_\ell}| \cdot |L_{k,\ell}| \\
     &= \sum_{(k,\ell)\in D\cup E} |Y_{k,\ell}-x_k \overline{x_\ell}| \cdot |L_{k,\ell}|   \le \varepsilon \sum_{(k,\ell)\in V^2} |L_{k,\ell}|\\
        &= \varepsilon \sum_{(k,\ell)\in V^2} |\mathcal{L}^G_{k,\ell}| = \varepsilon B.
    \end{aligned}
    $$
    Since $\lambda_1=0$ and $0<\lambda_2\le \lambda_3\le\ldots$ this implies that 
    $$
    \sum_{k=2}^d c_k \le \frac{\sum_{k=2}^d \lambda_k c_k}{\lambda_2} \le \frac{B\varepsilon}{\lambda_2}.
    $$
    Therefore, we have that 
    \begin{equation}
    c_1 \ge \trace(Y)- \frac{B\varepsilon}{\lambda_2}.
    \end{equation}
    Recall that, since $Y\succeq 0$, $\langle Y,Y\rangle_F = \trace(Y^2)\leq\trace(Y)^2$. With this,
    \begin{multline}
        \|Y - X \|_F^2 = \|Y - |x|^2 v_1\otimes\overline{v_1}\|_F^2 
        = \langle Y,Y\rangle_F - 2|x|^2 c_1 + |x|^4 \\
        \le \trace(Y)^2 - 2 \trace(X)c_1 + \trace(X)^2
        \le (\trace(Y)- \trace(X))^2 + \frac{2|x|^2 B}{\lambda_2}\varepsilon.
    \end{multline}
    It follows from \eqref{est:YXclose} that $\trace(Y-X)^2\le d^2\varepsilon^2$.
    Hence, we further have that 
    $$
    \|Y-X\|_F^2 \le \left(d^2\varepsilon + \frac{2 |x|^2 B}{\lambda_2} \right)\varepsilon \le
    \frac{3 |x|^2 B}{\lambda_2},
    $$
    where the final inequality follows from \eqref{eq:asspteps}.\\

    \smallskip

    To prove the second statement we will make use of \cite[Lemma 2]{demanet17}, which 
    implies that the top eigenpair is unique and that
    \begin{equation}\label{eq:topepapprox}
    \min_{\theta\in\R} \big| |x|v- e^{i\theta} x\big| \le \frac{2\sqrt2}{|x|} \|Y-xx^*\|_2
    \end{equation}
    provided that $\|Y-xx^*\|_2 < \frac{|x|^2}2$.
    By part one and assumption \eqref{eq:asspteps}, we have that
    $$
    \|Y-xx^*\|_2^2 \le \|Y-xx^*\|_F^2 \le \frac{3 |x|^2 B}{\lambda_2} \varepsilon \le \frac{|x|^4}4,
    $$
    and therefore, that \eqref{eq:topepapprox} holds. In particular,  there exists $\theta\in\R$ such that 
    $$
    \big| |x|v- e^{i\theta} x\big| \le 2 \sqrt{\frac{6B\varepsilon}{\lambda_2}}.
    $$
    With this we get that 
\begin{align}
    \left|\sqrt{\trace(Y)}v - e^{i\theta}x\right| 
    &\le \left|\sqrt{\trace(Y)} v- |x| v\right| + \left||x|v-e^{i\theta}x\right| 
    \\
    &\le \left|\sqrt{\trace(Y)} - |x|\right| + 2 \sqrt{\frac{6B\varepsilon}{\lambda_2}}
\end{align}
    Since the first term on the right hand side can be bounded by 
    \begin{equation}
        \left|\sqrt{\trace(Y)} - |x|\right| = \frac{\left|\trace(Y)-|x|^2\right|}{\sqrt{\trace(Y)} + |x|} \le \frac{\left|\trace(Y-X)\right|}{|x|} \le \frac{d\varepsilon}{|x|},
    \end{equation}
    we further get -- by making use of \eqref{eq:asspteps} -- that 
    $$
    \left|\sqrt{\trace(Y)}v - e^{i\theta}x\right| \le \frac{d\varepsilon}{|x|} +  2 \sqrt{\frac{6B\varepsilon}{\lambda_2}}
    \le \left(1+ 2\sqrt6 \sqrt{\frac{B}{\lambda_2}} \right)\sqrt\varepsilon,
    $$
    and we are done.
\end{proof}

Finally, we translate the above result into the context of our de-lifting task.

\begin{corollary}\label{cor:delifting}
    Let $f\in L^2(\R)$, let $\Lambda\subseteq \mathfrak{a}\Z^2$ finite and let $r>0$.
    Let $\mathcal{L}$ denote the Laplacian of the signal associated graph, 
    let $B=\sum_{(u,v)\in \Lambda^2} |\mathcal{L}_{u,v}|$, and let $D=\{(v,v),\,v\in \Lambda\}$.
    Further, suppose the following two assumptions:
    \begin{enumerate}[i)]
        \item $\varepsilon>0$ is such that 
        $$ 
        \varepsilon \le \min\left\{
        \frac1{|\Lambda|^2}, \frac{\lambda_2(f,\Lambda,r)}{12B}
        \right\} \cdot \|\G f\|_{\ell^2(\Lambda)}^2
        $$
        \item $Y\in \C^{\Lambda\times\Lambda}$, $Y\succeq 0$ satisfies 
        $$
        |Y_{u,v} - \G f(u)\overline{\G f(v)}| \le \varepsilon, \quad (u,v)\in D\cup E.
        $$
    \end{enumerate}
    Then, $Y$ has a unique top eigenvector $v\in \C^\Lambda$ and it holds that 
    $$
    \min_{\theta\in\R} \sum_{\lambda\in \Lambda} \big| \sqrt{\trace(Y)} v_\lambda - e^{i\theta}\G f(\lambda)\big|^2 \le \left( 1+2\sqrt6 \sqrt{\frac{B}{\lambda_2(f,\Lambda,r)}}\right) ^2  \varepsilon.
    $$
\end{corollary}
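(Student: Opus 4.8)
The plan is to deduce the statement directly from Theorem \ref{thm:demanetmod} by matching notation. First I would enumerate $\Lambda = \{1,\dots,d\}$ with $d = |\Lambda|$ and set $x := (\G f(\lambda))_{\lambda\in\Lambda}\in\C^d$, so that $|x|^2 = \|\G f\|_{\ell^2(\Lambda)}^2$ and, for each $v\in\Lambda$, the weight $\alpha_v = |x_v|^2$ equals $|\G f(v)|^2 = \spect f(v)$. With the edge set $E$ of Definition \ref{def:sagsp} (that is, $(u,v)\in E \iff 0<|u-v|<r$, which in particular has no self-loops), the vertex-weighted graph $(V,E,\alpha)$ of Theorem \ref{thm:demanetmod} is then literally the signal associated graph of $f$; consequently its Laplacian is $\mathcal{L}$, the quantity $\sum_{(k,\ell)\in V^2}|\mathcal{L}^G_{k,\ell}|$ coincides with $B = \sum_{(u,v)\in\Lambda^2}|\mathcal{L}_{u,v}|$, and its spectral gap is $\lambda_2(f,\Lambda,r)$.

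Next I would check that the hypotheses of Theorem \ref{thm:demanetmod} hold. Since $\varepsilon>0$, assumption i) forces its right-hand side $\min\{|\Lambda|^{-2},\ \lambda_2(f,\Lambda,r)/(12B)\}\cdot\|\G f\|_{\ell^2(\Lambda)}^2$ to be strictly positive; in particular $\|\G f\|_{\ell^2(\Lambda)}>0$ (so $x\neq 0$) and $\lambda_2(f,\Lambda,r)>0$, exactly the standing assumption of the theorem. With the identifications above, assumption i) is precisely condition \eqref{eq:asspteps}, and assumption ii) is precisely \eqref{est:YXclose}, once one observes that $\G f(u)\overline{\G f(v)} = x_u\overline{x_v}$ and that $D\cup E$ denotes the same index set in both places.

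Therefore Theorem \ref{thm:demanetmod} applies and gives that $Y$ has a unique top eigenpair $(\eta,v)$ together with
$$
\min_{\theta\in\R}\left|\sqrt{\trace(Y)}\,v - e^{i\theta}x\right| \le \left(1 + 2\sqrt6\,\sqrt{\frac{B}{\lambda_2(f,\Lambda,r)}}\right)\sqrt{\varepsilon}.
$$
To finish, I would square this inequality (both sides are nonnegative, so the squared minimum over $\theta$ is the minimum of the squares) and rewrite the squared Euclidean norm on $\C^\Lambda$ componentwise as $\sum_{\lambda\in\Lambda}\big|\sqrt{\trace(Y)}\,v_\lambda - e^{i\theta}\G f(\lambda)\big|^2$, which yields the asserted bound. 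Since the argument is essentially a relabelling of Theorem \ref{thm:demanetmod}, I do not expect a genuine obstacle; the only point requiring a moment's care is confirming that the vertex-weighted graph constructed from $x$ in Theorem \ref{thm:demanetmod} really is the signal associated graph of Definition \ref{def:sagsp}, which is immediate from $\alpha_v = |x_v|^2 = \spect f(v)$ and the matching definitions of the edge sets.
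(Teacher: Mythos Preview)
Your proposal is correct and matches the paper's own approach: the corollary is presented as an immediate translation of Theorem~\ref{thm:demanetmod} to the Gabor setting, and your identification of $x=(\G f(\lambda))_{\lambda\in\Lambda}$, $\alpha_v=\spect f(v)$, $d=|\Lambda|$, and the signal associated graph is exactly how this translation goes through. The only thing worth adding is that the paper leaves the proof of the corollary implicit, so your verification that $\varepsilon>0$ forces $x\neq0$ and $\lambda_2>0$ (needed to invoke the theorem) is a nice detail to make explicit.
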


\section{An error estimate for the reconstruction from incomplete and noisy Gabor coefficients}\label{sec:recfromincomplete}

\subsection{The error estimate}
Recall that given $T,S>0$ we denote 
$\Lambda=([-T,T]\times[-S,S])\cap \mathfrak{a}\Z^2$, and that $\mathcal{R}_\Lambda:\C^\Lambda\to L^2(\R)$ is defined by 
$$
\mathcal{R}_\Lambda(c) = \sum_{\lambda\in\Lambda} c_\lambda \pi(\lambda)\psi,
$$
where $\psi$ is the canonical dual of $(\pi(\lambda)\varphi)_{\lambda\in\mathfrak{a}\Z^2}$.

In Section \ref{subse:proofofthmincsamples} we establish the following estimate.

\begin{proposition}\label{thm:recincsamples}
    Let $T,S\in\mathfrak{a}\N$, let $0<\tau<T$ and let 
$$
\Lambda' := ([-T,T]\times [-S,S]^c) \cap \mathfrak{a}\Z^2.
$$
    For all $f\in L^2(\R)$ and for all $c\in\C^\Lambda$ it holds that 
    \begin{multline}
    \|f-\mathcal{R}_\Lambda(c)\|_{L^2(-\tau,\tau)} \le 3.1
    \Bigg(
    \|\G f- c\|_{\ell^2(\Lambda)} + \|\G f\|_{\ell^2(\Lambda')}  \\
    +\sqrt{\tau+1} \cdot  e^{-\frac{\pi}{\sqrt2} (T-\tau)}\cdot  
    \|f \cdot T_x \varphi^{\frac12}\|_{L^2(\R)}
    \Bigg).
    \end{multline}
\end{proposition}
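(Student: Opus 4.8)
The plan is to expand $f$ through the dual window and isolate the two sources of error---the noise on the samples and the truncation of the expansion---in a way that is compatible with restricting to $(-\tau,\tau)$. Writing $\mathfrak a\Z^2$ as the disjoint union of $\Lambda$, $\Lambda'$ and $\Lambda'':=\{\lambda\in\mathfrak a\Z^2:\ |\lambda_1|>T\}$, the dual-window reconstruction identity $f=\sum_{\lambda\in\mathfrak a\Z^2}\G f(\lambda)\pi(\lambda)\psi$ (see Section \ref{sec:gaussgabor}; convergence unconditional in $L^2(\R)$) gives
$$
f-\mathcal R_\Lambda(c)=\underbrace{\sum_{\lambda\in\Lambda}\bigl(\G f(\lambda)-c_\lambda\bigr)\pi(\lambda)\psi}_{=:\,h_0}\ +\ \underbrace{\sum_{\lambda\in\Lambda'}\G f(\lambda)\,\pi(\lambda)\psi}_{=:\,h_1}\ +\ \underbrace{\sum_{\lambda\in\Lambda''}\G f(\lambda)\,\pi(\lambda)\psi}_{=:\,h_2}.
$$
For $h_0$ and $h_1$ I would simply discard the restriction and invoke the Bessel estimate of Lemma \ref{lem:rieszbdpsi} for $\{\pi(\lambda)\psi\}_{\lambda\in\mathfrak a\Z^2}$, obtaining $\|h_0\|_{L^2(-\tau,\tau)}\le\|h_0\|_{L^2(\R)}\le 3.1\,\|\G f-c\|_{\ell^2(\Lambda)}$ and $\|h_1\|_{L^2(-\tau,\tau)}\le 3.1\,\|\G f\|_{\ell^2(\Lambda')}$.

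The real work is the term $h_2$, for which one must exploit that every $\pi(\lambda)\psi$ with $\lambda\in\Lambda''$ is concentrated far from $(-\tau,\tau)$, since $|\pi(\lambda)\psi(t)|=|\psi(t-\lambda_1)|$ with $|\lambda_1|>T>\tau$. I would group the sum over vertical lines: for $a\in\mathfrak a\Z$ with $|a|>T$ set $g_a:=\sum_{b\in\mathfrak a\Z}\G f(a,b)\,\pi(a,b)\psi$, so that $h_2=\sum_{|a|>T}g_a$ (legitimate by unconditional convergence), and since $\pi(a,b)\psi(t)=e^{2\pi ibt}\psi(t-a)$ one has $g_a(t)=\psi(t-a)\,\rho_a(t)$ where $\rho_a(t):=\sum_{b\in\mathfrak a\Z}\G f(a,b)\,e^{2\pi ibt}$ is a $\sqrt2$-periodic function whose (normalised) Fourier coefficients are precisely the $\G f(a,b)$. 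Combining the pointwise bound $|\psi(t-a)|\le e^{-\frac{\pi}{\sqrt2}|t-a|}\le e^{-\frac{\pi}{\sqrt2}(|a|-\tau)}$ on $(-\tau,\tau)$ (Lemma \ref{lem:bdspsi}), the periodicity of $\rho_a$ (so $(-\tau,\tau)$ is covered by $\lceil\sqrt2\,\tau\rceil\le\sqrt2(\tau+1)$ periods), and Parseval on the torus, $\int_0^{\sqrt2}|\rho_a|^2=\sqrt2\sum_{b\in\mathfrak a\Z}|\G f(a,b)|^2$, yields
$$
\|g_a\|_{L^2(-\tau,\tau)}^2\ \le\ 2(\tau+1)\,e^{-\sqrt2\,\pi(|a|-\tau)}\sum_{b\in\mathfrak a\Z}|\G f(a,b)|^2 .
$$

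The crux is then to bound the ``vertical energy'' $\sum_{b\in\mathfrak a\Z}|\G f(a,b)|^2$ \emph{locally}, i.e.\ by a constant times $\|f\,T_a\varphi^{1/2}\|_{L^2}^2$ rather than by $\|f\|_{L^2}^2$ (only the local version produces the quantity appearing in the statement). For this I would use that $T_a\varphi=(T_a\varphi^{1/2})^2$ with $T_a\varphi^{1/2}$ real and positive, giving the identity $\langle f,\pi(a,b)\varphi\rangle=\langle f\,T_a\varphi^{1/2},\,M_b(T_a\varphi^{1/2})\rangle$; hence $\sum_{b\in\mathfrak a\Z}|\G f(a,b)|^2$ is at most the Bessel bound of the pure-modulation system $\{M_{\mathfrak a k}(T_a\varphi^{1/2})\}_{k\in\Z}$---equal to $\tfrac1{\mathfrak a}\bigl\|\sum_{k}|T_a\varphi^{1/2}(\cdot+\sqrt2\,k)|^2\bigr\|_{\infty}$ and estimated by an absolute constant of the form $2^{1/4}\vartheta_3(0,e^{-\pi/2})$ via Lemma \ref{lem:sumgaussshifts}---times $\|f\,T_a\varphi^{1/2}\|_{L^2}^2\le\sup_x\|f\,T_x\varphi^{1/2}\|_{L^2}^2$. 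Plugging this into the last display and summing the geometric series $\sum_{a\in\mathfrak a\Z,\,|a|>T}e^{-\frac{\pi}{\sqrt2}(|a|-\tau)}=2\,\frac{e^{-\pi/2}}{1-e^{-\pi/2}}\,e^{-\frac{\pi}{\sqrt2}(T-\tau)}$ (here $T\in\mathfrak a\N$ is used, so that the smallest admissible $|a|$ is $T+\mathfrak a$ and $\tfrac{\pi}{\sqrt2}\mathfrak a=\tfrac{\pi}{2}$) gives $\|h_2\|_{L^2(-\tau,\tau)}\le c\sqrt{\tau+1}\,e^{-\frac{\pi}{\sqrt2}(T-\tau)}\sup_x\|f\,T_x\varphi^{1/2}\|_{L^2}$ with a small absolute constant $c<3.1$. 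Adding $h_0,h_1,h_2$ and checking that all three constants fit under the common prefactor $3.1$ completes the argument.

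The step I expect to be the main obstacle is exactly this last one: getting a \emph{localised} bound on $\sum_{b\in\mathfrak a\Z}|\G f(a,b)|^2$. A crude $\|f\|_{L^2}^2$ bound would not give the stated third term, while an $\ell^1$-in-$b$ bound fails outright, since $\G f(a,\cdot)=\widehat{f\,T_a\varphi}$ need not be summable. The identity $\langle f,\pi(a,b)\varphi\rangle=\langle f\,T_a\varphi^{1/2},M_b\,T_a\varphi^{1/2}\rangle$ is the observation that unlocks it. The remaining ingredients---the three-way split, the Bessel bound of Lemma \ref{lem:rieszbdpsi}, Parseval on the torus and the geometric summation---are routine, though a little care is needed to justify the manipulations with $\rho_a$ (all legitimate once $\sum_b|\G f(a,b)|^2<\infty$ has been established).
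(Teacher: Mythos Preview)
Your proposal is correct and follows essentially the same route as the paper: the same three-way split of $\mathfrak a\Z^2$, the same use of Lemma \ref{lem:rieszbdpsi} for the first two pieces, and for the far-away vertical lines the same factorisation $g_a=(T_a\psi)\cdot\rho_a$ together with the pointwise decay of $\psi$, Parseval for $\rho_a$, and the geometric summation in $a$. The one place where you deviate slightly is the bound on the vertical energy $\sum_{b\in\mathfrak a\Z}|\G f(a,b)|^2$: the paper passes to the Fourier side, writing $\gamma_\ell=2^{1/8}e^{-\pi ik\ell}\langle\ft(\varphi^{1/2}T_{-k\mathfrak a}f),T_{\ell\mathfrak a}\varphi^2\rangle$, and then invokes Lemma \ref{lem:besselTg} for translates of $e^{-2\pi x^2}$, whereas you stay on the time side and bound the Bessel constant of the modulation system $\{M_{\mathfrak a k}(T_a\varphi^{1/2})\}_k$ directly via the periodization estimate $\tfrac1{\mathfrak a}\bigl\|\sum_k|T_a\varphi^{1/2}(\cdot+\sqrt2\,k)|^2\bigr\|_\infty$. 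Both routes are standard and produce the identical constant $2^{1/4}\vartheta_3(0,e^{-\pi/2})$, so the numerics line up exactly.
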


As a consequence we obtain Theorem \ref{thm:main2}.

\begin{proof}[Proof of Theorem \ref{thm:main2}]
    We just need to show that
    \begin{equation}\label{eq:estGfGammaprime}
    \|\G f\|_{\ell^2(\Lambda')} \le  2.2 \sqrt{T+1} \cdot \kappa_S(f),
    \end{equation}
    which together with Proposition \ref{thm:recincsamples} implies the result.
    Let $K,L\in\N$ be given by $T=\mathfrak{a}K$ and $S=\mathfrak{a}L$.
    Note that by applying Lemma \ref{lem:gabordisccont} with $r=\frac{\mathfrak{a}}2$ we get that, for all $\lambda \in\Lambda'$, 
    $$
    |\G f(\lambda)| \le 1.79  \|\G f\|_{L^2(B_r(\lambda))}.
    $$
    As a consequence, 
    $$
    \begin{aligned}
    \|\G f\|_{\ell^2(\Lambda')}^2 &= \sum_{k=-K}^K \sum_{\substack{\ell\in\Z\\ |\ell|\ge L+1}} |\G f(\mathfrak{a}k,\mathfrak{a}\ell)|^2 \\
    &\le 1.79^2 \sum_{k=-K}^K \sum_{\substack{\ell\in\Z\\ |\ell|\ge L+1}} \|\G f\|_{L^2(B_r(\mathfrak{a}k,\mathfrak{a}\ell))}^2.    
    \end{aligned}
    $$
    For fixed $k$, for every $|\ell|\ge L+1$ we have that 
    $$
    B_r(\mathfrak{a}k,\mathfrak{a}\ell) \subseteq [k\mathfrak{a}-r,k\mathfrak{a}+r]\times[-S,S]^c
    $$
    while for $\ell'\neq \ell$
    $$
    B_r(\mathfrak{a}k,\mathfrak{a}\ell)\cap B_r(\mathfrak{a}k,\mathfrak{a}\ell')=\emptyset.
    $$
    Therefore, for all $k$ the inner sum is bounded according to 
    $$
    \sum_{\substack{\ell\in\Z\\ |\ell|\ge L+1}} \|\G f\|_{L^2(B_r(\mathfrak{a}k,\mathfrak{a}\ell))}^2 \le 2r \kappa_S(f)^2 = \mathfrak{a} \cdot \kappa_S(f)^2,
    $$
    and we get that 
    $$
    \|\G f\|_{\ell^2(\Lambda')} \le 1.79\cdot \sqrt{2K+1} \cdot \sqrt{\mathfrak{a}} \kappa_S(f)b
    $$
    which implies \eqref{eq:estGfGammaprime}.
\end{proof}

We now turn to the proof of Theorem \ref{thm:recincsamples}. 

\subsection{Proof of Proposition \ref{thm:recincsamples}}\label{subse:proofofthmincsamples}
We will denote $I=(-\tau,\tau)$.
Recalling the reconstruction formula involving the dual window $\psi$, we decompose 
\begin{align*}
    f &= \sum_{\lambda\in\Lambda} \G f(\lambda)\cdot \pi(\lambda)\psi\\
    &= \sum_{\lambda\in\Lambda} \G f(\lambda)\cdot \pi(\lambda)\psi + \sum_{\lambda\in\Lambda'} \G f(\lambda)\cdot\pi (\lambda)\psi 
    + \sum_{\substack{k\in\Z\\ |k|> \frac{T}{\mathfrak{a}}}} g_k,
\end{align*}
where $g_k:= \sum_{\ell\in\Z} \G f(\mathfrak{a}k,\mathfrak{a}\ell) M_{\mathfrak{a}\ell}T_{\mathfrak{a}k}\psi
$.
Notice that each of the series converges unconditionally since it is a sub-series of an unconditionally convergent one.
With this, we can write 
$$
f-\mathcal{R}_\Lambda(c) = \mathcal{R}_\Lambda((\G f(\lambda)-c_\lambda)_{\lambda\in\Lambda}) + \sum_{\lambda\in\Lambda'} \G f(\lambda)\cdot\pi (\lambda)\psi + \sum_{\substack{k\in\Z\\ |k|\ge \sqrt2 T+1}} g_k,
$$
which implies that 
\begin{multline}
\|f-\mathcal{R}_\Lambda(c)\|_{L^2(I)} \le 
\big\|\mathcal{R}_\Lambda\big((\G f(\lambda)-c_\lambda)_{\lambda\in\Lambda}\big)\big\|_{L^2(I)}\\
+
\left\|\sum_{\lambda\in\Lambda'}\G f(\lambda)\cdot \pi(\lambda)\psi\right\|_{L^2(I)}
+ 
\sum_{\substack{k\in\Z\\ |k|\ge \sqrt2 T+1}} \|g_k\|_{L^2(I)}.
\end{multline}
We estimate each term on the right hand side separately.

It follows from Lemma \ref{lem:rieszbdpsi} that the first term is bounded by
\begin{align}
\big\|\mathcal{R}_\Lambda\big((\G f(\lambda)-c_\lambda)_{\lambda\in\Lambda}\big)\big\|_{L^2(I)}
&=\left
\|\sum_{\lambda\in\Lambda}(\G f(\lambda)-c_\lambda) \pi(\lambda)\psi\right\|_{L^2(\R)}\\
&\le 3.1 \left( \sum_{\lambda\in\Lambda} |\G f(\lambda)-c_\lambda|^2 \right)^\frac12.\label{errorterm1}
\end{align}

For the second term, Lemma \ref{lem:rieszbdpsi} directly gives
\begin{equation}\label{errorterm2}
\left\|\sum_{\lambda\in\Lambda'}\G f(\lambda)\cdot \pi(\lambda)\psi\right\|_{L^2(I)}
\le
3.1 \| \G f\|_{\ell^2(\Lambda')}.
\end{equation}

Let $k\in\Z$ be arbitrary but fixed, and let $\gamma_\ell := \G f(k\mathfrak{a},\ell\mathfrak{a})$ so that
$$
g_k= \sum_{\ell\in\Z} \gamma_\ell M_{\ell\mathfrak{a}} T_{k\mathfrak{a}}\psi=m \cdot  T_{k\mathfrak{a}}\psi,
$$
where $m$ is the $\dfrac{1}{\mathfrak{a}}$-periodic Fourier-series defined by 
$$
m(t) = \sum_{\ell\in\Z} \gamma_\ell e^{2\pi i \mathfrak{a}\ell t}.
$$
In particular, covering $I$ with $\lceil \mathfrak{a}|I|\rceil$ translates of $[0,1/{\mathfrak{a}}]$, we obtain
\begin{align}
\|g_k\|_{L^2(I)}^2&\leq \|m\|_{L^2(I)}^2\|T_{k\mathfrak{a}}\psi\|_{L^\infty(I)}^2\\
&\leq\ceil{\mathfrak{a} |I|}\|m\|_{L^2([0,1/{\mathfrak{a}}])}^2\|T_{k\mathfrak{a}}\psi\|_{L^\infty(I)}^2\\
&=\frac{\ceil{\mathfrak{a} |I|}}{\mathfrak{a}}\sum_{\ell\in\Z} |\gamma_\ell|^2 \cdot 
\|T_{k\mathfrak{a}}\psi\|_{L^\infty(I)}^2 \\
&\leq 2(\tau+1)\sum_{\ell\in\Z} |\gamma_\ell|^2 
\cdot \|T_{k\mathfrak{a}}\psi\|_{L^\infty(I)}^2
\label{eq:estgk}
\end{align}
where we have used Parseval's relation for the $L^2$-norm of $m$
and the fact that $\dfrac{\ceil{\mathfrak{a} |I|}}{\mathfrak{a}}\leq |I|+\dfrac{1}{\mathfrak{a}}=2\tau+\sqrt{2}$.

We will now estimate the $\ell^2$-sum over $\gamma_\ell$. To that end note that, 
\begin{eqnarray*}
\gamma_\ell &=& \langle f, M_{\ell\mathfrak{a}} T_{k\mathfrak{a}}\varphi\rangle_{L^2} 
= \langle f, e^{2\pi i \mathfrak{a}^2 k\ell} T_{k\mathfrak{a}} M_{\ell\mathfrak{a}} \varphi\rangle_{L^2}\\
&=& e^{-\pi i k\ell} \langle  T_{-k\mathfrak{a}} f, M_{\ell\mathfrak{a}} \varphi\rangle_{L^2}
=e^{-\pi i k\ell} \langle \varphi^{\frac12}\cdot T_{-k\mathfrak{a}} f, M_{\ell\mathfrak{a}} \varphi^{\frac12}\rangle_{L^2}\\
&=& e^{-\pi i k\ell} \langle \ft(\varphi^{\frac12} \cdot T_{-k\mathfrak{a}} f), \ft(M_{\ell\mathfrak{a}} \varphi^{\frac{1}{2}})\rangle_{L^2}.
\end{eqnarray*}
Now, since $\ft(\varphi^{\frac{1}{2}})= 2^{\frac{1}{8}} \varphi^2$, we obtain 
$\ft(M_{\ell\mathfrak{a}} \varphi^{\frac{1}{2}})=2^{\frac{1}{8}} T_{\ell\mathfrak{a}}\varphi^2$
thus
$$
\gamma_\ell 
= 2^\frac18 e^{-\pi i k\ell} \langle \ft(\varphi^{\frac{1}{2}} T_{-k\mathfrak{a}} f), T_{\ell\mathfrak{a}} \varphi^2 \rangle_{L^2}.
$$
Since $\varphi^2(x)=2^{\frac{1}{2}} e^{-2\pi x^2}$, it follows from Lemma \ref{lem:besselTg} that 
\begin{equation}
\begin{aligned}
\sum_{\ell\in\Z} |\gamma_\ell|^2 &\le& 2^{\frac{5}{4}} \frac{\vartheta_3(0,e^{-\frac{\pi}{2}})}{2}\big\|\ft(\varphi^{\frac{1}{2}} T_{-k\mathfrak{a}} f)\big\|_{L^2}^2\\
&=& 2^{\frac{1}{4}} \vartheta_3(0,e^{-\frac{\pi}{2}}) \|f\cdot T_{k\mathfrak{a}}\varphi^\frac12\|_{L^2}^2\label{eq:parseval}\\
&\le& 1.69 \cdot \sup_{x\in\R} \|f\cdot T_x\varphi^\frac12\|_{L^2}^2.
\end{aligned}
\end{equation}

On the other hand, by Lemma \ref{lem:bdspsi} we have that $|\psi(t)| \le e^{-\frac{\pi|t|}{\sqrt2}}$. Thus, if $\mathfrak{a}|k|>T>\tau$
$$
\|T_{k\mathfrak{a}}\psi\|_{L^\infty(I)} \le \exp\{-\frac{\pi}{\sqrt2} (\mathfrak{a}|k|-\tau)\}.
$$
Putting this inequality together with \eqref{eq:parseval} into \eqref{eq:estgk}, we obtain
\begin{equation}
    \label{eq:estnormgk}
\|g_k\|_{L^2(I)} \le 
\sqrt{2(\tau+1)} \cdot e^{-\frac\pi{\sqrt2}(\mathfrak{a}|k|-\tau)} \cdot \sqrt{1.69} \sup_{x\in\R} \|f\cdot T_x\varphi^{\frac12}\|_{L^2}.
\end{equation}
Further, we have that 
\begin{multline}
\sum_{\substack{k\in\Z\\ |k|\ge \sqrt2 T+1}} e^{-\frac{\pi}{\sqrt2} (\mathfrak{a}|k|-\tau)} = 2 e^{\frac{\pi \tau}{\sqrt2}} 
\sum_{k=\sqrt2 T+1}^{\infty} e^{-\frac{\pi}{2}k} 
= 2 e^{\frac{\pi \tau}{\sqrt2}} \cdot \frac{e^{-\frac\pi2(\sqrt2 T+1)}}{1-e^{-\frac\pi2}} \\
=
0.52\ldots \cdot 
e^{-\frac\pi{\sqrt2}(T-\tau)}.
\end{multline}
Thus, summing \eqref{eq:estnormgk}, we get the upper bound
\begin{equation}\label{errorterm3}
\sum_{\substack{k\in\Z\\ |k|\ge \sqrt2 T+1}} \|g_k\|_{L^2(I)} 
\le 
\sqrt{\tau+1} \cdot e^{-\frac{\pi}{\sqrt2}(T-\tau)} \cdot \sup_{x\in\R} \|f\cdot T_x\varphi^{\frac12}\|_{L^2}
\end{equation}
By adding up the three error terms \eqref{errorterm1}, \eqref{errorterm2} and \eqref{errorterm3} we obtain the result.

\section{Determining relative phase changes}\label{sec:phasechanges}
Suppose we are given (possibly noisy) samples of the spectrogram, that is 
$(\spect f(\lambda))_{\lambda\in\Omega}$ with $\Omega\subseteq \R^2$ a set of sampling positions.
The objective of the present section is to find a way to evaluate the tensor
$$
\Tcal_u[\G f](p)=\G f(p+u)\overline{\G f(p)}.
$$
If $u=0$, we again find the spectrogram, that is, $\Tcal_0[\G f](p) = \spect f(p)$.
However, the relevant case is to go beyond $u=0$ as this is where the information of relative phase changes between points $p$ and $p+u$ is stored.\\

\smallskip

We dedicate Section \ref{subsec:evalopmotivation} to shed some light on why $\E$ is defined the way it is and to outline the further strategy.
In Section \ref{subsec:analysisE} we analyze certain aspects regarding the continuity of $\E$ w.r.t. the argument $G$. 
In Section \ref{subsec:imlications} we discuss implications for the task of estimating relative phase changes.

\subsection{Motivation and strategy}\label{subsec:evalopmotivation}
The purpose of this paragraph is to clarify the role of the evaluation operator $\E$.
First we show that the spectrogram extends to an entire function of two variables.
\begin{lemma}\label{lem:extspectrogram}
For $f\in L^2(\R)$ let $F$ be defined by
$$
F(z) = \B f(z_1-iz_2) (\B f)^*(z_1+iz_2) e^{-\pi z^2}, \quad z = 
\begin{pmatrix}
    z_1\\ z_2
\end{pmatrix}
\in \C^2.
$$
Then $F$ is the entire extension of the spectrogram of $f$.
That is to say, $F\in\mathcal{O}(\C^2)$ and $F\big|_{\R^2}=\spect f$.

In the sequel, we will simply write $\spect f$ instead of $F$.
\end{lemma}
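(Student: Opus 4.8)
The plan is to treat the two assertions separately — holomorphy of $F$ on $\C^2$, and the identity $F|_{\R^2}=\spect f$ — both by simply unwinding definitions, since this is really a bookkeeping lemma rather than a hard result.

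First I would check holomorphy by writing $F$ as a product of three entire factors. The maps $(z_1,z_2)\mapsto z_1-iz_2$ and $(z_1,z_2)\mapsto z_1+iz_2$ are $\C$-linear from $\C^2$ to $\C$, so $z\mapsto\B f(z_1-iz_2)$ and $z\mapsto(\B f)^*(z_1+iz_2)$ are entire on $\C^2$ (recall $\B f\in\mathcal{O}(\C)$, since the Bargmann transform lands in the Fock space of entire functions, and $(\B f)^*\in\mathcal{O}(\C)$ by the remark following the definition of the $*$-operation); and $z\mapsto e^{-\pi z^2}=e^{-\pi z_1^2}e^{-\pi z_2^2}$ is obviously entire. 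Since a product of entire functions of several variables is entire, $F\in\mathcal{O}(\C^2)$.

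Next I would evaluate $F$ on $\R^2$. Fixing $(x,y)\in\R^2$ and abbreviating $w=x+iy\in\C$, the arguments become $z_1-iz_2=\overline{w}$ and $z_1+iz_2=w$, while $z^2=x^2+y^2=|w|^2$; using $(\B f)^*(w)=\overline{\B f(\overline{w})}$ collapses $F(x,y)$ to $|\B f(\overline{w})|^2 e^{-\pi|w|^2}$. It then remains to recognize this as $|\G f(x,y)|^2$, which is precisely the Gabor–Bargmann relation \eqref{eq:relgaborbargmann}, after taking absolute values squared and applying the reflection $y\mapsto-y$ (since \eqref{eq:relgaborbargmann} expresses $\G f(x,-y)$, not $\G f(x,y)$, in terms of $\B f(x+iy)$). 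Comparing the two expressions gives $F(x,y)=\spect f(x,y)$, hence $F|_{\R^2}=\spect f$.

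I do not expect a genuine obstacle; the only point requiring care is tracking the conjugations together with the sign flip $y\mapsto-y$ in the Gabor–Bargmann identity, so that the two candidate formulas for $F(x,y)$ really coincide. I would also append a sentence noting that an entire function on $\C^2$ vanishing on $\R^2$ vanishes identically, so the holomorphic extension of $\spect f$ is unique, which justifies speaking of \emph{the} entire extension.
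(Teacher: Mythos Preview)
Your proposal is correct and follows essentially the same approach as the paper: both verify holomorphy as a product of entire functions and then evaluate $F$ on $\R^2$ via the Gabor--Bargmann identity \eqref{eq:relgaborbargmann}, the only cosmetic difference being that the paper writes out the two conjugate identities separately before multiplying, whereas you collapse things directly using $(\B f)^*(w)=\overline{\B f(\overline{w})}$. Your additional remark on uniqueness of the entire extension is a nice touch not spelled out in the paper.
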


\begin{proof}
That $F$ is entire is obvious as it is a product of entire functions.
From the relation between Gabor and Bargmann transform \eqref{eq:relgaborbargmann}, it follows that for all $(x,y)\in\R^2$
\begin{align}
\G f(x,y) &= e^{-\pi i xy} \mathcal{B}f(x-iy) e^{-\frac\pi2 (x^2+y^2)},\label{eq:GBid1}\\
\overline{\G f(x,y)} &= e^{\pi i xy}  (\mathcal{B}f)^*(x+iy) e^{-\frac\pi2 (x^2+y^2)}.\label{eq:GBid2}
\end{align}
Thus, multiplying the two functions gives that 
$$
\spect f(x,y) = \B f(x-iy) (\B f)^*(x+iy) e^{-\pi(x^2+y^2)}.
$$
This implies that $F\big|_{\R^2}=\spect f$, and we are done.
\end{proof}

The next result describes how evaluations of the function $\spect f$  relate to the Gabor transform.

\begin{lemma}\label{lem:relFGprod}
Let $f\in L^2(\R)$ and let $\spect f$ be the holomorphic extension of the spectrogram of $f$ as given in Lemma \ref{lem:extspectrogram}.
For all $p,u \in\R^2$ it holds that 
$$
\Tcal_u [\G f](p) = \E[\spect f](p,u).
$$
\end{lemma}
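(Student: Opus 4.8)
The plan is to evaluate the explicit entire extension of the spectrogram furnished by Lemma~\ref{lem:extspectrogram}, namely $\spect f(z) = \B f(z_1 - iz_2)\,(\B f)^*(z_1 + iz_2)\,e^{-\pi z^2}$, at the point $z = L(p,u)$, and to reconcile the resulting prefactors with $e^{Q(p,u)}$. Writing $L(p,u) = (L_1, L_2)$ with $L_1 = p_1 + \tfrac12(u_1 - iu_2)$ and $L_2 = p_2 + \tfrac12(iu_1 + u_2)$ as in Definition~\ref{def:evalop}, the first step is the elementary identities
\[
L_1 - iL_2 = (p_1 + u_1) - i(p_2 + u_2), \qquad L_1 + iL_2 = p_1 + ip_2,
\]
so that the first Bargmann factor only involves the point $p+u$ and the second only the point $p$.

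Next I would invert the Gabor--Bargmann dictionary. From \eqref{eq:GBid1} evaluated at the real point $p+u$ one gets
\[
\B f\big((p_1+u_1) - i(p_2+u_2)\big) = e^{\pi i (p_1+u_1)(p_2+u_2)}\,\G f(p+u)\, e^{\frac\pi2 |p+u|^2},
\]
and from \eqref{eq:GBid2} evaluated at the real point $p$,
\[
(\B f)^*(p_1 + ip_2) = e^{-\pi i p_1 p_2}\,\overline{\G f(p)}\, e^{\frac\pi2 |p|^2}.
\]
Substituting both, together with the value of $L(p,u)^2$, into $\spect f(L(p,u))$ yields $\spect f(L(p,u)) = \G f(p+u)\,\overline{\G f(p)}\, e^{P(p,u)}$ with an explicit exponent $P(p,u)$ collecting the unimodular phases and the Gaussian prefactors; hence $\E[\spect f](p,u) = \G f(p+u)\,\overline{\G f(p)}\, e^{P(p,u) + Q(p,u)}$, and it only remains to verify $P(p,u) = -Q(p,u)$.

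The last step is a direct (and only mildly computational) bookkeeping exercise. The key intermediate identity is
\[
L(p,u)^2 = L_1^2 + L_2^2 = |p|^2 + p\cdot u + i(p_2 u_1 - p_1 u_2),
\]
in which, conveniently, every term quadratic in $u$ cancels. Combining this with $|p+u|^2 + |p|^2 = 2|p|^2 + 2\,p\cdot u + |u|^2$ and $(p_1+u_1)(p_2+u_2) - p_1 p_2 = p_1 u_2 + p_2 u_1 + u_1 u_2$, the $|p|^2$ and $p\cdot u$ contributions in $P(p,u)$ cancel, the imaginary terms collapse, and what survives is exactly $P(p,u) = \tfrac\pi2 |u|^2 + \pi i(2p_1 + u_1)u_2 = -Q(p,u)$. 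Therefore $e^{P(p,u)+Q(p,u)} = 1$ and $\Tcal_u[\G f](p) = \G f(p+u)\,\overline{\G f(p)} = \E[\spect f](p,u)$, as claimed. There is no genuine obstacle here; the only point that warrants care is the cancellation of the $u$-quadratic terms in $L(p,u)^2$, which is precisely what makes $Q$ quadratic in $u$ and affine in $p$.
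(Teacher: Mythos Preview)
Your proof is correct and follows essentially the same route as the paper: you identify $L_1\mp iL_2$ with the complex parameters $(p_1+u_1)\mp i(p_2+u_2)$ and $p_1\pm ip_2$, invoke the Gabor--Bargmann identities \eqref{eq:GBid1}--\eqref{eq:GBid2}, and then check that the accumulated exponent equals $-Q(p,u)$. The only cosmetic difference is that the paper computes $L(p,u)^2$ via the factorization $(\zeta_1-i\zeta_2)(\zeta_1+i\zeta_2)$ rather than expanding directly, but the substance is identical.
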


\begin{proof}
    Let $L$ and $Q$ be defined as in Definition \ref{def:evalop}.
    With $p=\begin{pmatrix}
        x\\y
    \end{pmatrix}, 
    u=
    \begin{pmatrix}
        a\\b
    \end{pmatrix}$ we introduce 
    $\zeta:= L(p,u) \in \C^2$.
    Notice that 
    $$
    \zeta_1-i\zeta_2=  (x-iy) + (a-ib), \qquad \zeta_1+i\zeta_2 = x+iy
    $$
    and that 
    $$
    \zeta_1^2 + \zeta_2^2 =(\zeta_1-i\zeta_2)(\zeta_1+i\zeta_2) = |p|^2 + (x+iy)(a-ib).
    $$
    We use the identities \eqref{eq:GBid1} and \eqref{eq:GBid2} to evaluate
    \begin{align}
    \spect f(\zeta_1,\zeta_2) &= \B f(\zeta_1-i\zeta_2) (\B f)^*(\zeta_1+i\zeta_2) e^{-\pi(\zeta_1^2+\zeta_2^2)}\\
    &= \B f ((x+a)-i(y+b)) (\B f)^*(x-iy) e^{-\pi(|p|^2+(x+iy)(a-ib))}\\
    &= \G f(p+u) \overline{\G f(p)} \cdot e^{Q(p,u)}.
    \end{align}
    This implies the claim.
\end{proof}

Recall that the objective is to determine (or estimate) values of $\Tcal_u  [\G f](z)$.
Assume for a moment we were given $\spect f$, the holomorphic extension of the spectrogram, on all of $\C^2$.
In this case, one simply has to plug in $\spect f$ into $\E$ according to the preceding result.

In our situation however, the provided information is significantly weaker. First, we 
do not have direct access to $\spect f$ outside of $\R^2$.
While in theory, it is enough to know $\spect f$ on $\R^2$ (or any open subset thereof) to determine it on all of $\C^2$, to perform this extension in practice is already a challenging task.
To make things worse, we do not have access to $\spect f$ on all of $\R^2$, but only to samples $(\spect f(\lambda))_{\lambda\in\Omega}$ with $\Omega\subseteq \R^2$ finite. An additional difficulty is that those samples 
are corrupted by noise, but let us neglect this aspect for the moment.\\

As it turns out, $\spect f$ not only extends holomorphically to $\C^2$, but this
extension has also moderate growth with respect to the distance of a point $z\in\C^2$ to $\R^2$.
The following example demonstrates the importance of such growth in a one-dimensional setting.

\begin{example}
    Let us define a family of entire functions $F_\varepsilon\in\mathcal{O}(\C)$, $\varepsilon>0$ by virtue of
    $$
    F_\varepsilon(z) = \varepsilon e^{-\frac{iz}\varepsilon}, \quad z\in\C.
    $$
    Then, each $F_\varepsilon$ is $\varepsilon$-close to the zero function on $\R\subseteq \C$, that is
    $
    \|F_\varepsilon - 0\|_{L^\infty(\R)}=\varepsilon.
    $
    However, if $z=a+ib$ with $b>0$ we have that 
    $$
    |F_\varepsilon(z)-0| = \varepsilon e^{\frac{b}\varepsilon},
    $$
    which blows up if $\varepsilon \to 0$. If we consider $F_\varepsilon$ to be an estimator for the holomorphic extension of the zero function, we find that while approximation of the given data improves for $\varepsilon\to 0$ the performance of the estimator degenerates.

    \smallskip

    Instead, we may look for $F\in \mathcal{O}(\C)$ such that $M_F$ is finite on $[0,2b]$. 
    If $F$ is $\varepsilon$ close to $0$ on $\R$, i.e., $M_F(0)\le \varepsilon$ it follows from Hadamard's three line theorem that 
    $$
    |F(a+ib) - 0| \le M_F(b) \le \sqrt{M_F(0) M_F(2b)} \le \sqrt\varepsilon \cdot \sqrt{M_F(2b)}.
    $$
    This suggests that for $F$ to be a reliable estimator it is the combination of both, i) an approximation of the zero function well on $\R$ and ii) it has moderate growth behaviour as quantified by $M_F$.
\end{example}

We can now outline the approach we take for the problem at hand.
First we are going to construct a set of Ansatz functions $\mathcal{A}\subset\mathcal{O}(\C^2)$ obeying the following conditions
\begin{itemize}[--] 
    \item we want $\mathcal{A}$ to be a convex set as we want to formulate a convex problem.
         Furthermore, to make such a problem practically feasible, $\mathcal{A}$ must by parameterized by finite dimensional objects; in our case the cone of positive semi-definite matrices of a certain size will play the role of the parameter space.
    \item we need to make sure that $\mathcal{A}$ has sufficient expressive capabilities; that is, that it contains at least one function $F$ which meets both requirements. Namely, 
    i) approximating $\spect f$ well on $\Omega\subseteq \R^2$ and ii) possessing beneficial growth behaviour of the type discussed above. 
    \item we need to set up the convex problem in a way to guarantee that its solution does satisfy both of these requirements. 
\end{itemize}
Once we have identified $F\in\mathcal{A}$ with moderate growth and which approximates the samples well, say
$$
\sup_{\lambda\in\Omega} |\spect f(\lambda) - F(\lambda)| \le \varepsilon
$$
we want to use $\mathcal{E}[F](p,u)$ (an object which we can access) to 
estimate $\mathcal{E}[\spect f](p,u) = \Tcal_u[\G f](p)$ (an object which we want to access but cannot).
As $\E$ is linear, the resulting estimation error is given by 
$
|\E[\spect f - F](p,u)|.$
An application of Hadamard's three line theorem (Lemma \ref{lem:hadamardscv}) would then imply that 
\begin{equation}\label{eq:contEcal}
|\E[\spect f - F](p,u)| \le C(p,u) \|\spect f - F\|_{L^\infty(\R^2)}^{1/2}
\end{equation}
with $C(p,u)$ also depending on the growth behaviour of $\spect f - F$.
Unfortunately, we have no direct way to make sure that $F$ is close to $\spect f$ on all of $\R^2$, since we only have access to the values of the latter on a finite subset $\Omega\in\R^2$.
In the subsequent part, Section \ref{subsec:analysisE} we will employ sampling and cut-off arguments in order to establish an approximate version of \eqref{eq:contEcal}. That is, an estimate of the form 
$$
|\E[G](p,u)| \le C(p,u) \left( \|G\|_{\ell^\infty(\Omega)}^{1/2} + \epsilon(\Omega) \right)
$$
with the error term $\epsilon(\Omega)$ rapidly decaying as $\Omega$ becomes richer.

\subsection{Approximate continuity of the functional $\E$}\label{subsec:analysisE}
Throughout this section we shall employ the notation $\gamma(z) = e^{-\frac\pi8 z^2}$, $z\in\C^2$.
Moreover, we introduce the following subspace of entire functions
$$
\mathcal{O}^\infty(\C^d) := \{ G\in\mathcal{O}(\C^d):\, M_G(r) <\infty, \forall r\ge 0\}.
$$
In other words, $\mathcal{O}^\infty(\C^d) $ is the set of holomorphic functions that
are in $H^\infty(T_r)$ for every tube $T_r=\{x+iy\in\C^d\,:\ |\Im(y)|<r\}$.
For $G\in\mathcal{O}^\infty(\C^2)$, we
define a Gaussian cut-off at $\tau\in\R^2$ by
$$
G_\tau (z) := G(z) \gamma(z-\tau),\quad z\in\C^2.
$$
The term ``Gaussian cut-off'' of course refers to the restriction of $G_\tau$ to $\R^2$.
Note that this restriction is in $L^1(\R^2)$ so that its Fourier transform $\widehat{G_\tau}$ makes sense.
We are interested in functions with controlled smoothness of cut-off $G_\tau$ and introduce
\begin{equation}
    \label{eq:defvck}
    \mathcal{V}(C,K) := \{G\in\mathcal{O}^\infty(\C^2): \, |\widehat{G_\tau}| \le K e^{-C|\cdot|^2},\,\forall \tau\in\R^2\}.
\end{equation}
The main result of the present section is the following continuity estimate for the evaluation operator $\E$.

\begin{proposition}\label{prop:contestEvalop}
    Let $s,C,K>0$ 
    and let $\Omega\subseteq s\Z^2$.
    Suppose that $p,u\in\R^2$ are such that 
    $$
    \dist\left(p+\frac12 u, s\Z^2\setminus \Omega\right) \ge \sqrt{\frac{C}{2\pi}} s^{-1}.
    $$
    Then it holds for all $G\in \mathcal{V}(C,K)$ that 
    \begin{multline}
    |\mathcal{E}[G](p,u)| \le 8.6  \left( \|G\|_{\ell^\infty(\Omega)}^{1/2} + \left(\sqrt{M_G(0)} + 1.8 \sqrt{\frac{K}{C}} \right) e^{-\frac{C}{32 s^2}} \right)
    \\
    \times \sqrt{M_G(|u|)} e^{-\frac{15\pi}{32} u^2}.
    \end{multline}
\end{proposition}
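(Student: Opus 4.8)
The strategy is to interpolate between the two "good" pieces of information we have about $G$: its smallness on the sampling grid $\Omega$ (controlled via $M_G(0)$ and the $\ell^\infty(\Omega)$-norm) and its controlled growth off $\mathbb{R}^2$ (encoded in $\mathcal{V}(C,K)$ and $M_G$). The bridge is Hadamard's three-line theorem in several variables (Lemma~\ref{lem:hadamardscv}). Concretely, I would first unwind the definition $\mathcal{E}[G](p,u) = G(L(p,u)) \cdot e^{Q(p,u)}$. The point $\zeta := L(p,u) = p + \tfrac12\begin{psmallmatrix}1&-i\\i&1\end{psmallmatrix}u$ has imaginary part $\Im(\zeta) = \tfrac12(-u_2, u_1)^T$, so $|\Im(\zeta)| = \tfrac12|u|$. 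Thus $|G(L(p,u))| \le M_G(\tfrac12|u|)$, but we want to do better by exploiting smallness near $\mathbb{R}^2$. I would apply Lemma~\ref{lem:hadamardscv} on the tube of half-width $|u|$ (so $b = |u|$, $t = \tfrac12|u|$), giving
$$
|G(L(p,u))| \le M_G(|u|)^{1/2}\, \big(\text{pointwise substitute for } M_G(0)^{1/2}\big),
$$
where the substitute is really a bound on $|G|$ on $\mathbb{R}^2$ near the relevant real point $p + \tfrac12 u = \Re(\zeta)$.

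**From grid samples to a real bound.** The heart of the argument is replacing "$G$ small on $\mathbb{R}^2$" by "$G$ small on the grid $\Omega$." For this I would apply the Gaussian cut-off: set $\tau = p + \tfrac12 u$ (the real part of $\zeta$) and consider $G_\tau(z) = G(z)\gamma(z - \tau)$ with $\gamma(z) = e^{-\pi z^2/8}$. By hypothesis $G \in \mathcal{V}(C,K)$, so $|\widehat{G_\tau}(\xi)| \le K e^{-C|\xi|^2}$, and now Lemma~\ref{lem:samplinginf} (the approximate sampling inequality) applies to $F = G_\tau\big|_{\mathbb{R}^2}$: it yields
$$
\|G_\tau\|_{L^\infty(\mathbb{R}^2)} \le 73\,\|G_\tau\|_{\ell^\infty(s\mathbb{Z}^2)} + 233\,\tfrac{K}{C}\,e^{-C/(16 s^2)}.
$$
Then I would split the discrete sup over $s\mathbb{Z}^2$ into the part in $\Omega$ and the part in $s\mathbb{Z}^2 \setminus \Omega$. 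On $\Omega$ we have $|G_\tau(sk)| \le \|G\|_{\ell^\infty(\Omega)}$ trivially (since $|\gamma| \le 1$ on $\mathbb{R}^2$). On $s\mathbb{Z}^2 \setminus \Omega$ we use that $\gamma$ decays: the distance hypothesis $\dist(p + \tfrac12 u, s\mathbb{Z}^2\setminus\Omega) \ge \sqrt{C/(2\pi)}\,s^{-1}$ ensures $|\gamma(sk - \tau)| = e^{-\pi|sk - \tau|^2/8} \le e^{-C/(16 s^2)}$ for such $k$; then $|G(sk)|$ is controlled by $M_G(0)$ since $sk \in \mathbb{R}^2$. (One must be a little careful: $|G(sk)|$ on all of $s\mathbb{Z}^2\setminus\Omega$ could be large, but it is $\le M_G(0)$, which is exactly the quantity appearing in the claimed bound, and it gets the favorable factor $e^{-C/(16s^2)}$ from $\gamma$.) Assembling gives $\|G_\tau\|_{L^\infty(\mathbb{R}^2)} \lesssim \|G\|_{\ell^\infty(\Omega)} + (M_G(0) + \tfrac{K}{C}) e^{-C/(16 s^2)}$.

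**Transferring back to $G$ and closing.** At the real point $\tau = p + \tfrac12 u$ we have $\gamma(\tau - \tau) = \gamma(0) = 1$, so $|G(\tau)| = |G_\tau(\tau)| \le \|G_\tau\|_{L^\infty(\mathbb{R}^2)}$; more generally the above bounds $|G|$ on a whole real neighborhood, which is what Hadamard needs (the three-line theorem used in Lemma~\ref{lem:hadamardscv} only sees $M_F(0) = \sup_{x\in\mathbb{R}}|f(x)|$ of the restricted one-variable function $f$, and that one-variable restriction runs through real points only at $z = \tau$, so in fact $|G(\tau)|$ alone suffices). Plugging the bound on $|G(\tau)|$ into the Hadamard inequality:
$$
|G(L(p,u))| \le \Big(C_1\|G\|_{\ell^\infty(\Omega)} + C_1(M_G(0) + \tfrac{K}{C})e^{-C/(16s^2)}\Big)^{1/2} M_G(|u|)^{1/2}.
$$
Using $\sqrt{a+b} \le \sqrt a + \sqrt b$ distributes the square root to produce the $\|G\|_{\ell^\infty(\Omega)}^{1/2}$, $\sqrt{M_G(0)}$, and $\sqrt{K/C}$ terms, and halves the exponent in the exponential: $e^{-C/(32s^2)}$, matching the statement. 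Finally I would multiply by $|e^{Q(p,u)}| = e^{\Re Q(p,u)} = e^{-\pi|u|^2/2}$; combined with the $M_G(|u|)^{1/2}$ factor and the extra Gaussian gain $e^{-15\pi u^2/32}$ claimed, this means I should be harvesting a further $e^{-15\pi u^2/32}$ somewhere — presumably by taking $\gamma$'s decay into account at the point $\zeta$ itself (i.e. estimating $M_{G_\tau}$ on the tube rather than $M_G$, since $|\gamma(\zeta - \tau)|$ contributes $e^{-\pi/8 \cdot \Re((\zeta-\tau)^2)}$ and $(\zeta - \tau)^2 = (\tfrac12\begin{psmallmatrix}1&-i\\i&1\end{psmallmatrix}u)^2$ has a negative real part of order $-|u|^2$). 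I would therefore run the Hadamard step on $G_\tau$ rather than $G$, tracking the extra Gaussian factor, and reconcile the exponents $-\pi/2$ (from $Q$) plus the $\gamma$-contribution to land exactly on $-15\pi/32$; bookkeeping these constants is the main technical chore.

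**Main obstacle.** The conceptual steps are all furnished by earlier lemmas; the real difficulty is the careful accounting of the Gaussian exponents — ensuring that the cut-off parameter ($\pi/8$), the tube width used in Hadamard, the factor $e^{\Re Q}$, and the real part of $(\zeta-\tau)^2$ combine to give precisely $e^{-15\pi u^2/32}$ — together with verifying that the distance hypothesis is exactly the threshold at which $|\gamma(sk-\tau)| \le e^{-C/(16s^2)}$ holds on $s\mathbb{Z}^2\setminus\Omega$. Chasing the numerical constant $8.6$ through Lemma~\ref{lem:samplinginf}'s constants $73$ and $233$ and the $\sqrt{\cdot}$ is routine but must be done with care.
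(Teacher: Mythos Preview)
Your proposal is correct and follows essentially the same route as the paper: localize with the Gaussian cut-off $G_\tau$ centered at $\tau=p+\tfrac12 u$, apply Lemma~\ref{lem:samplinginf} to bound $\|G_\tau\|_{L^\infty(\R^2)}$ by grid data plus an exponentially small tail (splitting the grid into $\Omega$ and its complement via the distance hypothesis), run Hadamard on $G_\tau$ at height $|u|/2$ inside a tube of half-width $|u|$, and then track the Gaussian exponents to arrive at $e^{-15\pi u^2/32}$.

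One correction: your intermediate remark that ``the one-variable restriction runs through real points only at $z=\tau$, so in fact $|G(\tau)|$ alone suffices'' is not right. In the proof of Lemma~\ref{lem:hadamardscv} the one-variable slice $f(z)=F\big(\tfrac{\Im\zeta}{t}z+\tau\big)$ hits $\R^2$ along the entire line $\{\tfrac{\Im\zeta}{t}x+\tau:x\in\R\}$, so $m_f(0)$ is a supremum over that line, not a point evaluation. You immediately self-correct by deciding to apply Hadamard to $G_\tau$ and use $M_{G_\tau}(0)=\|G_\tau\|_{L^\infty(\R^2)}$, which is exactly what the paper does; just drop the incorrect justification.
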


\begin{proof}
    Let $\tau=p+\frac12 u$ and consider $G_\tau(z)= G(z)\gamma(z-\tau)$. \\
    
    First we rewrite $\E[G](p,u)$ in terms of $G_\tau$: Let $L,Q$ be defined as in Definition \ref{def:evalop}.
Note that $\Re Q(p,u) = -\frac\pi2 u^2$ and set 
$$
\zeta_*:= L(p,u) = p + \frac12 \begin{pmatrix}
    1 &-i\\ i&1
\end{pmatrix}
u.
$$
Note that $\zeta_*-\tau=\frac{i}2 \begin{pmatrix}
    0&-1\\1 &0
\end{pmatrix}u$.
Thus, 
\begin{align}\label{est:Econt1}
|\E[G](p,u)| &= |G(\zeta_*)| e^{-\frac\pi2 u^2} \\
&= |G_\tau(\zeta_*)| \cdot \exp\left\{\frac\pi8 \Re\{(\zeta_*-\tau)^2\} - \frac\pi2 u^2 \right\}\\
&= |G_\tau(\zeta_*)| \cdot \exp\left\{-\frac{17\pi}{32}u^2\right\}.
\end{align}
Next, we apply Hadamard's three line theorem (Lemma \ref{lem:hadamardscv}, to be precise) to $G_\tau$.
To that end, note that $G_\tau\in\mathcal{O}^\infty(\C^2)$ since $G\in\mathcal{O}^\infty(\C^2)$.
As $|\Im \zeta_*|=\frac{|u|}2$, we get that 
$$
|G_\tau(\zeta_*)| \le M_{G_\tau}(|u|/2) \le \sqrt{M_{G_\tau}(0) M_{G_\tau}(|u|)} 
\le 
\|G_\tau\|_{L^\infty(\R^2)}^{1/2} \sqrt{M_{G_\tau}(|u|)}.
$$
Moreover, as $|\gamma(\zeta-\tau)| = \exp\left\{-\frac\pi8 (\Re(\zeta)-\tau)^2 +\frac\pi8 (\Im \zeta)^2 \right\}$ we find that 
\begin{equation}\label{est:Econt2}
M_{G_\tau}(r) = \sup_{|\Im\zeta| = r} |G(\zeta) \gamma(\zeta-\tau)| \le e^{\frac\pi8 r^2} \cdot M_G(r).
\end{equation}

Next we want to replace the term $\|G_\tau\|_{L^\infty(\R^2)}$ by its discrete variant.
 Applying Lemma \ref{lem:samplinginf} gives that 
\begin{equation}
    \|G_\tau\|_{L^\infty(\R^2)} \le 73 \|G_\tau\|_{\ell^\infty(s\Z^2)} + 233 \frac{K}{C}e^{-\frac{C}{16s^2}}.
\end{equation}
Suppose that $\lambda_0\in \Omega^c:= s\Z^2\setminus \Omega$.
By assumption we have that 
$$
|\tau-\lambda_0| \ge \sqrt{\frac{C}{2\pi}} s^{-1},
$$
which implies that
$$
|G_\tau(\lambda_0)| = |G(\lambda_0)| e^{-\frac\pi8 |\lambda_0-\tau|^2} \le M_G(0) \cdot e^{-\frac{C}{16s^2}}.
$$
In particular, as $\|G_\tau \|_{\ell^\infty(s\Z^2)}\le \|G_\tau\|_{\ell^\infty(\Omega)} + \|G_\tau\|_{\ell^\infty(\Omega^c)}$, we have that 
\begin{equation}\label{est:Econt3}
    \|G_\tau\|_{L^\infty(\R^2)} \le 73 \left(\|G_\tau\|_{\ell^\infty(\Omega)} + \left( M_G(0)+ \frac{233 K}{73 C} \right) e^{-\frac{C}{16s^2}} \right).
\end{equation}

It remains to combine the estimates:
\begin{align*}
    &|\E[G](p,u)|\\
    & \stackrel{\eqref{est:Econt1}}{\le} |G_\tau(\zeta_*)| \cdot \exp\left\{-\frac{17\pi}{32} u^2 \right\}\\
    &\stackrel{\eqref{est:Econt2}}{\le} \sqrt{M_G(|u|)} \cdot \|G_\tau\|_{L^\infty(\R^2)}^{1/2} \cdot \exp\left\{-\frac{15\pi}{32} u^2 \right\}\\
    &\stackrel{\eqref{est:Econt3}}{\le} \sqrt{73} \sqrt{M_G(|u|)} \left(\|G_\tau\|_{\ell^\infty(\Omega)} + \left( M_G(0)+ \frac{233 K}{73 C} \right) e^{-\frac{C}{16s^2}} \right)^{1/2}\cdot e^{-\frac{15\pi}{32} u^2}\\
    &\le 8.6  \left( \|G\|_{\ell^\infty(\Omega)}^{1/2} + \left(\sqrt{M_G(0)} + 1.8 \sqrt{\frac{K}{C}} \right) e^{-\frac{C}{32 s^2}} \right)
    \cdot \sqrt{M_G(|u|)} e^{-\frac{15\pi}{32} u^2}
\end{align*}
This finishes the proof.
\end{proof}

\subsection{Implications}\label{subsec:imlications}

Recall the objective from the beginning of the section.

\smallskip

{\em Given samples $(\spect f(\lambda))_{\lambda\in\Omega}$ we want to estimate relative phase changes}
$$
\G f(p+u) \overline{\G f(p)} =\E[\spect f](p,u).
$$

\smallskip

The basic idea is -- since $\spect f$ is not directly available -- to identify a suitable dummy $F\in \mathcal{O}(\C^2)$ in place of $\spect f$, and use $\E[F]$ as an estimator for relative phase changes. This raises several questions.
Most importantly, how to actually identify $F$ and how accurate is the resulting estimator?
The remainder of this paragraph is aimed at resolving these questions. 
To do that we first introduce a finite-dimensional cone of Ansatz functions, then formulate an associated convex problem (CP) and finally argue that the solution of this CP gives rise to an accurate estimator. 

Throughout, let $\Gamma\subseteq\mathfrak{a}\Z^2$ denote a finite set.
The following lemma clarifies the relation between $F_A$ and the Gabor transform.
\begin{lemma}\label{lem:relansatzspectro}
    Let $N=|\Gamma|$ and let $a=(a_\lambda)_{\lambda\in\Gamma}\in \C^\Gamma$.
    The following properties hold:
    \begin{enumerate}[i)]
    \item $\Phi_{\lambda,\mu}$ is the entire extension of $\G[\pi(\lambda)\varphi]\overline{\G [\pi(\mu)\varphi]}$. That is, for all $p=(x,\omega)\in \R^2\subseteq \C^2$ it holds that 
    $$
    \Phi_{\lambda,\mu}(p) = \G[\pi(\lambda)\varphi](x,\omega) \cdot \overline{\G[\pi(\mu)\varphi](x,\omega)}.
    $$
    \item With $A=a\otimes\bar a$ and $f=\sum_{\lambda\in\Gamma}a_\lambda \pi(\lambda)\varphi$, we have that $F_A$ is the entire extension of $\spect f$. 
    That is, for all $p=(x,\omega)\in \R^2\subseteq \C^2$ it holds that 
    $$
    F_A(p) = \spect f(x,\omega).
    $$
    \item Suppose $A\in\mathfrak{A}_+(\Gamma)$ with eigenvalues
    $\alpha_1,\ldots,\alpha_N\ge 0$ and with corresponding orthonormal basis of eigenvectors $u_1,\ldots,u_N\in\C^\Gamma$. Let 
    $$
    f_k := \sum_{\lambda\in\Gamma} u_k(\lambda) \pi(\lambda)\varphi,\quad k\in\{1,\ldots,N\}.
    $$
    Then, $F_A$ is the entire extension of $\sum_{k=1}^N \alpha_k \spect f_k$. That is, for all $p=(x,\omega)\in\R^2\subseteq\C^2$ it holds that 
    $$
    F_A(p) = \sum_{k=1}^N \alpha_k \spect f_k(x,\omega).
    $$
    \end{enumerate}
\end{lemma}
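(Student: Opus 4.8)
The plan is to prove the three statements in order, deducing each from the previous one together with the explicit formulas already assembled in the preliminaries. The central computation is part i); parts ii) and iii) are then essentially bookkeeping on top of it.

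For part i), I would start from the formula \eqref{eq:Gphitrans}, which gives
$$
\G[\pi(\lambda)\varphi](x,\omega) = e^{-\pi i (x+\lambda_1)(\omega-\lambda_2)}\, e^{-\frac\pi2 |z-\lambda|^2},\qquad z=(x,\omega),
$$
and the conjugate analogue for $\mu$. Multiplying the two and writing everything in terms of $z$, $\lambda$, $\mu$, I would collect the quadratic Gaussian factor $e^{-\frac\pi2(|z-\lambda|^2+|z-\mu|^2)}$ and the unimodular phase $e^{-\pi i(x+\lambda_1)(\omega-\lambda_2)+\pi i(x+\mu_1)(\omega-\mu_2)}$. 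The Gaussian part rearranges, via the identity $|z-\lambda|^2+|z-\mu|^2 = 2\bigl|z-\frac{\lambda+\mu}2\bigr|^2+\frac12|\lambda-\mu|^2$, into $e^{-\pi|z-\frac{\lambda+\mu}2|^2}\,e^{-\frac\pi4|\lambda-\mu|^2}$, matching the Gaussian factor and part of the constant $C(\lambda,\mu)$ in the definition of $\Phi_{\lambda,\mu}$. The phase part must be reorganised into $e^{i\pi z^T\mathcal J(\lambda-\mu)}$ times the remaining constant $e^{\pi i(\lambda_1\lambda_2-\mu_1\mu_2)}$ from $C(\lambda,\mu)$; here one expands the bilinear phase, notices that the $x\omega$–terms cancel (this is why $\lambda-\mu$ appears rather than $\lambda,\mu$ separately), and identifies the linear-in-$z$ part with $z^T\mathcal J(\lambda-\mu) = x(\lambda_2-\mu_2)-\omega(\lambda_1-\mu_1)$ up to sign. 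Once the real-variable identity $\Phi_{\lambda,\mu}(p)=\G[\pi(\lambda)\varphi](p)\overline{\G[\pi(\mu)\varphi](p)}$ is verified, entirety of $\Phi_{\lambda,\mu}$ is clear from its closed form, so $\Phi_{\lambda,\mu}$ is \emph{the} (unique) entire extension of that product. I expect this phase bookkeeping to be the only genuinely fiddly step; the algebra is elementary but sign conventions must be tracked carefully.

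For part ii), with $A=a\otimes\bar a$ we have $A_{\lambda,\mu}=a_\lambda\overline{a_\mu}$, so by linearity and part i), for $p\in\R^2$,
$$
F_A(p)=\sum_{\lambda,\mu\in\Gamma} a_\lambda\overline{a_\mu}\,\G[\pi(\lambda)\varphi](p)\,\overline{\G[\pi(\mu)\varphi](p)}
= \Bigl|\sum_{\lambda\in\Gamma} a_\lambda\,\G[\pi(\lambda)\varphi](p)\Bigr|^2 = |\G f(p)|^2 = \spect f(p),
$$
using linearity of $\G$ and $f=\sum_\lambda a_\lambda\pi(\lambda)\varphi$. Since both sides are restrictions to $\R^2$ of entire functions on $\C^2$ and they agree on $\R^2$, $F_A$ is the entire extension of $\spect f$.

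For part iii), diagonalise $A=\sum_{k=1}^N \alpha_k\, u_k\otimes\overline{u_k}$, which means $A_{\lambda,\mu}=\sum_k \alpha_k u_k(\lambda)\overline{u_k(\mu)}$. Plugging this into the definition of $F_A$ and again invoking linearity and part i),
$$
F_A(p)=\sum_{k=1}^N \alpha_k \sum_{\lambda,\mu\in\Gamma} u_k(\lambda)\overline{u_k(\mu)}\,\Phi_{\lambda,\mu}(p)
=\sum_{k=1}^N \alpha_k\, \spect f_k(p),\qquad p\in\R^2,
$$
by part ii) applied with $a=u_k$ and $f=f_k$ (and $\alpha_k\ge0$ because $A\succeq0$, which is what makes the right-hand side a legitimate nonnegative combination of spectrograms). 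As before, equality on $\R^2$ of two restrictions of entire functions upgrades to the statement that $F_A$ is the entire extension of $\sum_k\alpha_k\spect f_k$, completing the proof. The only subtlety worth flagging is that part ii) is literally the special case $N=1$ (or a rank-one $A$) of part iii), so in the write-up one could even prove iii) directly and obtain ii) as a corollary; I would nonetheless keep the stated order since ii) is the conceptually cleanest landmark.
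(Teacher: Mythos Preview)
Your proposal is correct and follows essentially the same approach as the paper: both prove i) by multiplying the explicit formula \eqref{eq:Gphitrans} against its conjugate for $\mu$, using the parallelogram-type identity $|z-\lambda|^2+|z-\mu|^2=2|z-\tfrac{\lambda+\mu}2|^2+\tfrac12|\lambda-\mu|^2$ to extract the Gaussian, and then sorting the remaining phase into $e^{i\pi z^T\mathcal J(\lambda-\mu)}$ times the constant $C(\lambda,\mu)$; parts ii) and iii) are then obtained by linearity of $\G$ and of $A\mapsto F_A$ together with the spectral decomposition of $A$, exactly as you outline.
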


\begin{proof} First note that the first statement implies the second one. For this, one simply writes $\spect f=\G f\cdot \overline{\G f}$
and uses linearity of $f\mapsto\G f$. The second statement then implies the third one by linearity
of $A\mapsto F_A$ once one writes $\displaystyle A=\sum_{k=1}^N \alpha_k u_k\otimes\overline{u_k}$.
Recalling \eqref{eq:Gphitrans}, for $\lambda=(a,b)$ we have that
    \begin{equation*}
        \G [\pi(\lambda)\varphi](x,\omega) = e^{\pi i ab} e^{-\pi ix\omega} e^{\pi i (xb-\omega a)} e^{-\frac\pi2 |z-\lambda|^2}.
    \end{equation*}
    An elementary computation shows that 
    $$|z-\lambda|^2+|z-\mu|^2 = 2 \left|z-\frac{\lambda+\mu}2 \right|^2 + \frac{|\lambda-\mu|^2}2.$$
    Hence, with $\mu=(a',b')$ we get that 
    \begin{align*}
        \big(\G [\pi(\lambda)\varphi] \cdot \overline{\G [\pi(\mu)\varphi]}\big) (z)
        &= 
        e^{\pi i(ab-a'b')} e^{\pi i [x (b-b')-\omega (a-a')]} e^{-\frac\pi2 (|z-\lambda|^2+|z-\mu|^2)}\\
        &= 
        e^{\pi i(ab-a'b')} e^{\pi i z \cdot \mathcal{J}(\lambda-\mu)} e^{\frac\pi4 |\lambda-\mu|^2} e^{-\pi \left| z-\frac{\lambda+\mu}2\right|^2} 
    \end{align*}
    which coincides with $\Phi_{\lambda,\mu}(z)$. 
\end{proof}

Later on we want to apply Proposition \ref{prop:contestEvalop} to $G=F_A-\spect f$.
Before that we show how in this case the relevant quantities appearing in the right hand side can be controlled in terms of the parameterizing matrix $A$.

\begin{lemma}\label{lem:AVCK}
    Let $A\in\mathfrak{A}_+(\Gamma)$. 
    Moreover, let 
    $$
    C = \frac{8\pi}{17}\quad \text{and}\quad K=272 \cdot \max_{\lambda\in\Gamma} A_{\lambda,\lambda}
    $$
    Then it holds that $F_A\in \mathcal{V}(C,K)$ and moreover that
    $$
    M_{F_A}(r) \le 64.1 e^{2\pi r^2}\cdot \max_{\lambda\in\Gamma} A_{\lambda,\lambda}, \quad r\ge 0.
    $$
\end{lemma}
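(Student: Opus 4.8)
The plan is to treat the two assertions separately; both reduce, after some explicit Gaussian computations, to lattice sums of shifted Gaussians controlled by Lemma~\ref{lem:sumgaussshifts}. Throughout write $M:=\max_{\nu\in\Gamma}A_{\nu,\nu}$ and recall that for a positive semi-definite $A$ one has $|A_{\lambda,\mu}|\le\sqrt{A_{\lambda,\lambda}A_{\mu,\mu}}\le M$.

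\emph{The growth bound $M_{F_A}(r)\le 64.1\,e^{2\pi r^2}M$.} I would first record that $\|F_A\|_{L^\infty(\R^2)}\le 16.1\,M$: indeed, by Lemma~\ref{lem:relansatzspectro}(i) one has, for real $p$, $F_A(p)=\sum_{\lambda,\mu}A_{\lambda,\mu}\,\G[\pi(\lambda)\varphi](p)\,\overline{\G[\pi(\mu)\varphi](p)}$, and $|\G[\pi(\lambda)\varphi](p)|=e^{-\frac{\pi}2|p-\lambda|^2}$ by \eqref{eq:Gphitrans}, so $|F_A(p)|\le M\big(\sum_{\lambda\in\mathfrak a\Z^2}e^{-\frac{\pi}2|p-\lambda|^2}\big)^2$, which is at most $16.1\,M$ by Lemma~\ref{lem:sumgaussshifts}. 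To control $M_{F_A}(r)$ I would use the eigendecomposition $A=\sum_k\alpha_k\,u_k\otimes\overline{u_k}$ (with $\alpha_k\ge 0$) and $f_k=\sum_{\lambda\in\Gamma}u_k(\lambda)\pi(\lambda)\varphi$, so that $F_A=\sum_k\alpha_k\,\spect f_k$ as entire functions by Lemma~\ref{lem:relansatzspectro}(iii). Any $z\in\C^2$ with $|\Im z|=r$ can be written $z=L(p,u)$ with $p\in\R^2$ and $|u|=2r$ (since $\Re L(p,u)=p+\tfrac12u$ and $\Im L(p,u)=-\tfrac12\mathcal J u$), and then by Lemma~\ref{lem:relFGprod} applied to each $f_k$,
$$
F_A(z)=e^{-Q(p,u)}\sum_k\alpha_k\,\G f_k(p+u)\,\overline{\G f_k(p)} .
$$
Since $\Re Q(p,u)=-\tfrac{\pi}2|u|^2=-2\pi r^2$, a Cauchy--Schwarz over $k$ (legitimate because $\alpha_k\ge 0$) gives $|F_A(z)|\le e^{2\pi r^2}\sqrt{F_A(p)\,F_A(p+u)}\le e^{2\pi r^2}\|F_A\|_{L^\infty(\R^2)}\le 64.1\,e^{2\pi r^2}M$. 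In particular $M_{F_A}(r)<\infty$ for every $r$, i.e.\ $F_A\in\mathcal O^\infty(\C^2)$.

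\emph{Membership $F_A\in\mathcal V\big(\tfrac{8\pi}{17},272M\big)$.} It remains to bound $|\widehat{(F_A)_\tau}(\xi)|$ uniformly in $\tau$. I would expand $(F_A)_\tau|_{\R^2}=\sum_{\lambda,\mu}A_{\lambda,\mu}\,\Phi_{\lambda,\mu}|_{\R^2}\,\gamma(\cdot-\tau)$ and compute the Fourier transform of one summand explicitly. Since $|\Phi_{\lambda,\mu}(p)|=e^{-\frac{\pi}4|\lambda-\mu|^2}e^{-\pi|p-\frac{\lambda+\mu}2|^2}$ for real $p$, the summand is a Gaussian times a linear phase; completing the square via $e^{-\pi|p-a|^2-\frac{\pi}8|p-b|^2}=e^{-\frac{\pi}9|a-b|^2}e^{-\frac{9\pi}8|p-c|^2}$ and using $\widehat{e^{-\frac{9\pi}8|\cdot|^2}}=\tfrac89 e^{-\frac{8\pi}9|\cdot|^2}$ on $\R^2$ yields
$$
\big|\widehat{(\Phi_{\lambda,\mu}\gamma(\cdot-\tau))}(\xi)\big|=\tfrac89\,e^{-\frac{\pi}4|\lambda-\mu|^2}\,e^{-\frac{\pi}9|\frac{\lambda+\mu}2-\tau|^2}\,e^{-\frac{8\pi}9|\xi-\frac12\mathcal J(\lambda-\mu)|^2}.
$$
Applying the triangle inequality together with $|A_{\lambda,\mu}|\le M$, reindexing the sum over $\Gamma\times\Gamma$ by $m=\lambda-\mu\in\mathfrak a\Z^2$ and $c=\tfrac{\lambda+\mu}2$ (which then ranges over a translate of $\mathfrak a\Z^2$), and estimating the $c$-sum by $\sum_{c}e^{-\frac{\pi}9|c-\tau|^2}\le 18\,\vartheta_3(0,e^{-18\pi})^2=:C_1$ (Lemma~\ref{lem:sumgaussshifts}), one is left with $\tfrac89\,M\,C_1\sum_{m\in\mathfrak a\Z^2}e^{-\frac{\pi}4|m|^2}e^{-\frac{8\pi}9|\xi-\frac12\mathcal Jm|^2}$. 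Substituting $v=\tfrac12\mathcal J m$ — a bijection of $\mathfrak a\Z^2$ onto $\tfrac{\mathfrak a}2\Z^2$ with $|m|^2=4|v|^2$ — and invoking the algebraic identity
$$
\pi|v|^2+\tfrac{8\pi}9|\xi-v|^2=\tfrac{17\pi}9\big|v-\tfrac{8}{17}\xi\big|^2+\tfrac{8\pi}{17}|\xi|^2
$$
splits off precisely the factor $e^{-\frac{8\pi}{17}|\xi|^2}$ and leaves $\sum_{v\in\frac{\mathfrak a}2\Z^2}e^{-\frac{17\pi}9|v-\frac8{17}\xi|^2}\le C_2:=\tfrac{72}{17}\vartheta_3(0,e^{-72\pi/17})^2$, again uniformly in $\xi$. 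Thus $|\widehat{(F_A)_\tau}(\xi)|\le \tfrac89 C_1 C_2\,M\,e^{-\frac{8\pi}{17}|\xi|^2}$, and $\tfrac89 C_1 C_2$ is comfortably below $272$; combined with $F_A\in\mathcal O^\infty(\C^2)$ this is exactly the claim.

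\emph{Where the difficulty lies.} The delicate point is the decay exponent. The elementary bound above puts the Gaussian $e^{-\frac{8\pi}9|\xi-v|^2}$ off-centre, at $v=\tfrac12\mathcal J(\lambda-\mu)$, and the crude absorption $e^{-\frac{8\pi}9|\xi-v|^2}\le e^{\pi|v|^2}e^{-\frac{8\pi}{17}|\xi|^2}$ would consume the factor $e^{-\frac{\pi}4|\lambda-\mu|^2}=e^{-\pi|v|^2}$ that is needed to make the sum over $m$ (equivalently over $v$) converge with a $\Gamma$-independent constant; one must instead retain the full completed square $e^{-\frac{17\pi}9|v-\frac8{17}\xi|^2}$ and sum it as a shifted-Gaussian lattice sum. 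This is exactly why the sharp exponent is $\tfrac{8}{17}=\tfrac{8/9}{1+8/9}$ and why a larger $C$ is not attainable by this route; it is also the step where one must make sure that $K$ depends only on $\max_\nu A_{\nu,\nu}$, which the bound $|A_{\lambda,\mu}|\le\max_\nu A_{\nu,\nu}$ together with the summability of the Gaussian weights delivers.
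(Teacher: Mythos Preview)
Your proof is correct. For the membership $F_A\in\mathcal V\bigl(\tfrac{8\pi}{17},272M\bigr)$ you follow essentially the paper's route: compute the Fourier transform of each $\Phi_{\lambda,\mu}\gamma(\cdot-\tau)$, invoke $|A_{\lambda,\mu}|\le M$, reindex by $\lambda\pm\mu$, and complete the square to split off $e^{-\frac{8\pi}{17}|\xi|^2}$, leaving a shifted-Gaussian lattice sum handled by Lemma~\ref{lem:sumgaussshifts}. Your parametrisation by $(m,c)=(\lambda-\mu,\tfrac{\lambda+\mu}{2})$ with $c$ in a translate of $\mathfrak a\Z^2$ avoids the paper's overcount (which extends both $u=\lambda+\mu$ and $v=\lambda-\mu$ independently over $\mathfrak a\Z^2$), and this is why you end up with $\tfrac89 C_1C_2\approx 68$ instead of the paper's $\approx 271$; but the mechanism is the same.

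For the growth bound $M_{F_A}(r)$ your argument is genuinely different. The paper expands $F_A(\zeta)=\sum A_{\lambda,\mu}\Phi_{\lambda,\mu}(\zeta)$ and estimates $|\Phi_{\lambda,\mu}(\zeta)|$ directly for complex $\zeta$, again reducing to Gaussian lattice sums. You instead diagonalise $A=\sum_k\alpha_k\,u_k\otimes\overline{u_k}$, write $F_A=\sum_k\alpha_k\,\spect f_k$, represent $z=L(p,u)$ with $|u|=2r$, and use Lemma~\ref{lem:relFGprod} plus Cauchy--Schwarz in the eigen-index to obtain $|F_A(z)|\le e^{2\pi r^2}\sqrt{F_A(p)F_A(p+u)}\le e^{2\pi r^2}\|F_A\|_{L^\infty(\R^2)}$. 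This is more conceptual, exploits the positivity of $A$ beyond the entrywise bound $|A_{\lambda,\mu}|\le M$, and yields the sharper constant $16.1$ in place of $64.1$. The paper's direct approach, on the other hand, does not need Lemma~\ref{lem:relFGprod} or the eigendecomposition and treats the complex case in one stroke without first reducing to $\R^2$.
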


\begin{proof}
    Committing a slight abuse of notation we denote the restrictions of $F_A$, $\gamma$ and $\Phi_{\lambda,\mu}$ to $\R^2$ by the same symbols.
    For the first statement, we need to show for arbitrary $\tau\in\R^2$ that 
    $$
    |\ft[F_A \cdot \gamma(\cdot-\tau)](\xi)| \le K e^{-C|\xi|^2},\,\quad \xi \in\R^2.
    $$
    Let $\lambda,\mu\in\R^2$ be arbitrary but fixed.
    We set
    $$
    p =p(\lambda,\mu)=\frac{4\lambda+4\mu+\tau}9 \quad \text{and} \quad q=q(\lambda,\mu)= \frac{\lambda+\mu-2\tau}6,
    $$
    so that, after an elementary computation,
    $$
    \big( x-\frac{\lambda+\mu}2 \big)^2 + \frac18 \big( x-\tau\big)^2 = 
    \frac98(x-p)^2 + q^2.
    $$
    With this we rewrite
    \begin{align*}
    \Phi_{\lambda,\mu}(x) \gamma(x-\tau) &= C(\lambda,\mu) e^{i\pi [\mathcal{J}(\lambda-\mu)]\cdot x}\cdot e^{-\pi\big(x-\frac{\lambda+\mu}2 \big)^2} \cdot e^{-\frac\pi8 (x-\tau)^2}\\
    &= C(\lambda,\mu) e^{-\pi q^2} \cdot \left(M_{\frac12 \mathcal{J}(\lambda-\mu)} T_p [e^{-\frac98 \pi\cdot^2}]\right)(x).
    \end{align*}
    As $\ft[e^{-\frac{9}8 \pi \cdot ^2}](\xi) = \frac89 e^{-\frac{8\pi}9 \xi^2}$, we get that the Fourier transform of the above function is given by
    \begin{equation}
    \ft[\Phi_{\lambda,\mu}(\cdot) \gamma(\cdot-\tau)] (\xi) = \frac89 C(\lambda,\mu) e^{-\pi q^2}  \cdot \left(T_{\frac12 \mathcal{J}(\lambda-\mu)} M_{-p} [e^{-\frac89 \pi \cdot^2}]\right)(\xi).
    \end{equation}
    With this, application of the triangle inequality yields 
    \begin{multline*}
    |\ft[F_A(\cdot)  \gamma(\cdot-\tau)](\xi)| \le \|A\|_{\max} \sum_{\lambda,\mu\in\Gamma} 
    |\ft[\Phi_{\lambda,\mu} \gamma(\cdot-\tau)](\xi)| \\
    \le \frac89 \|A\|_{\max} \sum_{\lambda,\mu\in\mathfrak{a}\Z^2}.
    e^{ -\frac\pi4 (\lambda-\mu)^2 - \frac{\pi}{36} (\lambda+\mu-2\tau)^2 -\frac{8\pi}9 \left(\xi - \frac12\mathcal{J}(\lambda-\mu) \right)^2}.
    \end{multline*}
    Note that $\mathcal{J}^2=-I$, that $\mathcal{J}^{-1}=-\mathcal{J}$ and that
    $$
    \begin{cases}
        \mathfrak{a}\Z^2 \times \mathfrak{a}\Z^2 \quad &\rightarrow \quad \mathfrak{a}\Z^2\times\mathfrak{a}\Z^2\\
        (\lambda,\mu)  \quad &\mapsto \quad (\lambda+\mu,\lambda-\mu)
    \end{cases}
    $$
    is injective (but not onto!). Thus, substituting $u=\lambda+\mu$ and $v=\lambda-\mu$ and $\xi' = \mathcal{J}\xi$ allows us to estimate the above sum from above by
    \begin{align}
        &\sum_{u,v\in\mathfrak{a}\Z^2} \exp\left\{-\frac\pi4 v^2 -\frac{\pi}{36} (u-2\tau)^2 -\frac{8\pi}9 (\xi-\frac12 \mathcal{J} v)^2\right\}\\
        =& \sum_{u,v\in\mathfrak{a}\Z^2} \exp\left\{-\frac\pi4 v^2 -\frac{\pi}{36} (u-2\tau)^2 -\frac{8\pi}9 (\xi'-\frac{v}2 )^2\right\}\\
        =& \left( \sum_{u\in\mathfrak{a}\Z^2} e^{-\frac{\pi}{36} (u-2\tau)^2}\right) 
        \left( \sum_{v\in\mathfrak{a}\Z^2} e^{-\frac\pi4 v^2 -\frac{8\pi}9 (\xi'-\frac{v}2)^2} \right).
        \label{eq:prodofgaussums}
    \end{align}
    As per Lemma \ref{lem:sumgaussshifts} we have that 
    \begin{equation*}
        \sum_{u\in\mathfrak{a}\Z^2} e^{-\frac{\pi}{36} (u-2\tau)^2} \le 
        \left(\sup_{t\in\R} \sum_{k\in\Z} e^{-\frac\pi{72} (k-t)^2 } \right)^2 
        \le 
        72\vartheta_3(0, e^{-72\pi})^2.
    \end{equation*}
    We rewrite the second sum in \eqref{eq:prodofgaussums} as 
    \begin{equation}
        \sum_{v\in\mathfrak{a}\Z^2} e^{-\frac\pi4 v^2 -\frac{8\pi}9 (\xi'-\frac{v}2)^2}
        = \prod_{\ell=1}^2 \sum_{k\in\Z} \exp\left\{-\frac\pi8 k^2 -\frac\pi9 \left(k - 2\sqrt2 \xi'_\ell\right)^2 \right\}.
    \end{equation}
    We consider for $t\in\R$ arbitrary 
    $$
    \begin{aligned}
       \sum_{k\in\Z} \exp\left\{-\frac\pi8 k^2 -\frac\pi9 \left(k - t\right)^2 \right\}
     &= e^{-\frac{\pi}{17} t^2} \sum_{k\in\Z} \exp\{ -\frac{17}{72}\pi (k-\frac{8}{17}t)^2\}\\
     &\le e^{-\frac\pi{17}t^2} \cdot  \sqrt{\frac{72}{17}} \vartheta_3(0, e^{-\frac{72}{17}\pi}),
    \end{aligned}
    $$
    where we once more made use of Lemma \ref{lem:sumgaussshifts}.
    Plugging in $t=2\sqrt2 \xi'_\ell$, $\ell\in\{1,2\}$, we obtain by combining the above estimates that 
    \begin{multline}
        |\ft[F_A(\cdot)  \gamma(\cdot-\tau)](\xi)| \\
        \le \frac89 \|A\|_{\max} \cdot \frac{72^2}{17} \cdot \vartheta_3(0, e^{-72\pi})^2 \cdot \vartheta_3(0, e^{-\frac{72}{17}\pi})^2 \cdot e^{-\frac{8\pi}{17}\xi^2 }.
    \end{multline}
    Recall that a positive definite matrix attains its maximum absolute value in the diagonal: $\|A\|_{\max} =\max_{\lambda\in\Gamma} A_{\lambda,\lambda}$. 
    Finally, as
    $$
    \frac{8}{9} \cdot \frac{72^2}{17} \cdot \vartheta_3(0, e^{-72\pi})^2 \cdot \vartheta_3(0, e^{-\frac{72}{17}\pi})^2<272,
    $$
   we find that  
    \begin{equation*}
    |\ft[F_A \cdot \gamma(\cdot-\tau)](\xi)| \le 272
    \left(\max_{\lambda\in\Gamma} A_{\lambda,\lambda}\right) e^{-\frac{8\pi}{17}\xi^2},
    \end{equation*}
    which proves the first statement.\\
    \smallskip
    
    To prove the second statement, let $\zeta\in\C^2$ such that $|\Im\zeta|=r$.
    We apply the triangle inequality to obtain 
    \begin{multline}
    |F_A(\zeta)|
    \le \|A\|_{\max} \cdot \sum_{\lambda,\mu\in\Gamma} |\Phi_{\lambda,\mu}(\zeta)| \\
    = \|A\|_{\max} \cdot \sum_{\lambda,\mu\in\Gamma} e^{- \frac\pi4 (\lambda-\mu)^2 -\pi \Im\zeta\cdot \mathcal{J}(\lambda-\mu)
    - \pi \left(\Re\zeta-\frac{\lambda+\mu}2\right)^2 
    + \pi (\Im \zeta)^2 }.
    \end{multline}
    We substitute again $v=\lambda-\mu$ and $u=\lambda+\mu$, and notice that 
    \begin{align*}
    -\frac\pi4 v^2-\pi \Im\zeta \cdot \mathcal{J}v+\pi (\Im \zeta)^2 &= -\pi \left(\Im \zeta + \frac12 \mathcal{J}v \right)^2 + 2\pi (\Im \zeta)^2 \\
    &\le -\pi \left(\Im \zeta + \frac12 \mathcal{J}v \right)^2 + 2\pi r^2.
    \end{align*}
    With this (and recalling that $(\lambda,\mu)\mapsto (\lambda+\mu,\lambda-\mu)$ is injective on $\mathfrak{a}\Z^2 \times \mathfrak{a}\Z^2$), 
    \begin{equation}\label{est:FAzeta}
        |F_A(\zeta)| \le \|A\|_{\max} \cdot e^{2\pi r^2} \cdot  \sum_{v\in \mathfrak{a}\Z^2} e^{-\pi \left( \Im\zeta + \frac12 \mathcal{J}v\right)^2}  \cdot 
        \sum_{u\in \mathfrak{a}\Z^2} e^{-\pi \left(\Re \zeta-\frac12 u \right)^2}.
    \end{equation}
    To bound the first sum on the right hand side we 
    proceed similarly as before and resort to a one dimensional sum: 
    \begin{multline}
        \sum_{v\in \mathfrak{a}\Z^2} e^{-\pi \left( \Im\zeta + \frac12 \mathcal{J}v\right)^2} 
        \le \left( \sup_{t\in\R} \sum_{k\in\Z} e^{-\pi(t+ \frac{k}{2\sqrt2})^2} \right)^2 = \left( \sup_{t\in\R} \sum_{k\in\Z} e^{-\frac{\pi}8(t+ k)^2} \right)^2\\
        \le 8 \vartheta_3(0,e^{-8\pi})^2,
    \end{multline}
    where the last inequality is again a consequence of Lemma \ref{lem:sumgaussshifts}.
    An analogous argument shows that the second sum in \eqref{est:FAzeta} is upper bounded by the same quantity. Hence,
    \begin{equation}
       |F_A(\zeta)| \le \|A\|_{\max} e^{2\pi r^2} \cdot 8^2 \vartheta_3(0,e^{-8\pi})^4 
       = \|A\|_{\max} e^{2\pi r^2} \cdot 64.0\ldots
    \end{equation}
    Since $\zeta$ with $|\Im \zeta|=r$ was arbitrary, we are done.
    \end{proof}

    A similar estimate holds true for the holomorphic extension of a spectrogram.

\begin{lemma}\label{lem:entextgrowth}
Let $f\in L^2(\R)$,  with $\spect f$ (the entire extension of) its spectrogram.
Then it holds that 
$$
M_{\spect f}(r) \le \|\spect f\|_{L^\infty(\R^2)} \cdot e^{2\pi r^2}, \quad r\ge 0.
$$
\end{lemma}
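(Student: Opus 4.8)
The plan is to reduce the whole statement to a single pointwise bound on the Bargmann transform. First I would record the elementary consequence of the Gabor--Bargmann identity \eqref{eq:relgaborbargmann}: rewriting it as $|\B f(x+iy)|^2 = \spect f(x,-y)\,e^{\pi(x^2+y^2)}$ and taking a supremum over the real variables shows that
$$
|\B f(w)| \le \sqrt{\|\spect f\|_{L^\infty(\R^2)}}\; e^{\frac{\pi}{2}|w|^2}, \qquad w\in\C.
$$
Since $(\B f)^*(w)=\overline{\B f(\bar w)}$ and $|\bar w|=|w|$, the very same estimate holds for $(\B f)^*$. The key point here is to use the bound in terms of $\|\spect f\|_{L^\infty}$ coming directly from \eqref{eq:relgaborbargmann}, rather than the generic reproducing-kernel bound in terms of $\|f\|_{L^2}^2$, which would give a weaker constant.

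Next I would insert these two bounds into the formula for the entire extension of the spectrogram supplied by Lemma \ref{lem:extspectrogram}, namely $\spect f(z)=\B f(z_1-iz_2)\,(\B f)^*(z_1+iz_2)\,e^{-\pi z^2}$ with $z^2=z_1^2+z_2^2$. This yields
$$
|\spect f(z)| \le \|\spect f\|_{L^\infty(\R^2)}\; \exp\!\Big\{\tfrac{\pi}{2}\big(|z_1-iz_2|^2+|z_1+iz_2|^2\big)-\pi\,\Re(z_1^2+z_2^2)\Big\}.
$$

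The only remaining work is a short algebraic bookkeeping step. Writing $z=x+iy$ with $x=\Re z$, $y=\Im z$, $|y|=r$, one expands to find $|z_1-iz_2|^2+|z_1+iz_2|^2 = 2|x|^2+2|y|^2$ (the mixed $x_1y_2$ and $x_2y_1$ terms cancel) and $\Re(z_1^2+z_2^2)=|x|^2-|y|^2$. Substituting, the $|x|^2$-contributions cancel and the exponent collapses to $2\pi|y|^2=2\pi r^2$, so $|\spect f(z)|\le\|\spect f\|_{L^\infty(\R^2)}\,e^{2\pi r^2}$; taking the supremum over all $z$ with $|\Im z|=r$ gives the claim. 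There is no substantial obstacle; the only things to be careful about are invoking the correct pointwise bound on $\B f$ and keeping track of the paper's convention $z^2=z_1^2+z_2^2$ when taking real parts.
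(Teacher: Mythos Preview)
Your proof is correct and follows essentially the same route as the paper: both invoke the explicit formula for the entire extension from Lemma \ref{lem:extspectrogram}, bound $|\B f(w)|$ by $\sqrt{\|\spect f\|_{L^\infty}}\,e^{\frac{\pi}{2}|w|^2}$ via the Gabor--Bargmann identity \eqref{eq:relgaborbargmann}, and then perform the same algebraic simplification of the exponent to obtain $2\pi|\Im z|^2$. The only difference is cosmetic ordering---you isolate the Bargmann bound first, whereas the paper does it inline.
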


\begin{proof}
   From the definition of $\spect f$ in terms of $\B f$ in Lemma \ref{lem:extspectrogram}, we obtain and estimate
    \begin{multline}
        |\spect f(z)| =  |\B f(z_1-iz_2)| \cdot |\B f (\overline{z_1+i z_2})| \cdot e^{-\pi(\Re(z_1^2)+\Re(z_2)^2)}\\
        \le \|\G f\|_{L^\infty}^2 \cdot \exp\left\{\frac\pi2 |z_1-iz_2|^2 + \frac\pi2 |z_1+iz_2|^2 -\pi \bigl(|\Re(z)|^2 - |\Im(z)|^2\bigr)\right\}
    \end{multline}
expressing $\B f$ in terms of $\G f$ with \eqref{eq:relgaborbargmann}. But $\|\G f\|_{L^\infty}^2=\|\spect f\|_{L^\infty(\R^2)}$
and 
$$
    |z_1-iz_2|^2+|z_1+iz_2|^2=2|z_1|^2+2|z_2|^2=2|\Re(z)|^2+2|\Im(z)|^2.
$$
It follows that
$$
|\spect f(z_1,z_2)|        \le \|\spect f\|_{L^\infty(\R^2)}        e^{2\pi|\Im(z)|^2},
$$
which implies the claim.
\end{proof}

Evaluation of $F_A$ can be expressed in terms of a matrix inner product.
In view of Algorithm \ref{alg1} this fact is relevant in order to see that the CP in step $1$ can be rephrased as a SDP as well as for the practical implementation of step $1$.
\begin{lemma}\label{lem:matrixevaluationFA}
    Let $\Gamma\subseteq \mathfrak{a}\Z^2$ be finite, let $p\in\R^2\subseteq \C^2$ and 
    let $v\in \C^\Gamma$ be defined by 
    $$
    v_\lambda = \G [\pi(\lambda)\varphi](p)= \exp\left\{ -\frac\pi2(p-\lambda)^2 -\pi(x+a)(y-b)\right\}
    \quad 
    \lambda\in\Gamma.
    $$
    Then it holds for all $A\in\mathfrak{A}_+(\Gamma)$ that 
    $$
    F_A(p) = \trace( \bar{v}\otimes v \cdot  A) = \langle A, v \otimes \bar{v}\rangle_F.
    $$
\end{lemma}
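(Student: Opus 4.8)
The plan is to compute $F_A(p)$ directly from its definition and recognize the result as a Frobenius inner product. Recall that $F_A(z)=\sum_{\lambda,\mu\in\Gamma}A_{\lambda,\mu}\Phi_{\lambda,\mu}(z)$, so evaluating at $p\in\R^2$ gives $F_A(p)=\sum_{\lambda,\mu\in\Gamma}A_{\lambda,\mu}\Phi_{\lambda,\mu}(p)$. By Lemma \ref{lem:relansatzspectro}\,i), the restriction of $\Phi_{\lambda,\mu}$ to $\R^2$ equals $\G[\pi(\lambda)\varphi](p)\cdot\overline{\G[\pi(\mu)\varphi](p)}=v_\lambda\overline{v_\mu}$, where $v$ is the vector in the statement. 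Hence $F_A(p)=\sum_{\lambda,\mu\in\Gamma}A_{\lambda,\mu}v_\lambda\overline{v_\mu}$.

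It then remains to identify this double sum with the two asserted expressions. First I would note that $\sum_{\lambda,\mu}A_{\lambda,\mu}v_\lambda\overline{v_\mu}=v^H A^{T}\bar v$? — more cleanly: writing $(v\otimes\bar v)_{\lambda,\mu}=v_\lambda\overline{v_\mu}$, we have $\langle A,v\otimes\bar v\rangle_F=\sum_{\lambda,\mu}A_{\lambda,\mu}\overline{(v\otimes\bar v)_{\lambda,\mu}}=\sum_{\lambda,\mu}A_{\lambda,\mu}\overline{v_\lambda}v_\mu$. Since $A$ is Hermitian, relabelling $\lambda\leftrightarrow\mu$ and using $A_{\mu,\lambda}=\overline{A_{\lambda,\mu}}$ shows $\sum_{\lambda,\mu}A_{\lambda,\mu}\overline{v_\lambda}v_\mu=\overline{\sum_{\lambda,\mu}A_{\lambda,\mu}v_\lambda\overline{v_\mu}}$, and as $\langle A,v\otimes\bar v\rangle_F$ is real for $A\succeq 0$ (the matrix $v\otimes\bar v$ is positive semidefinite), both equal the same real number, giving $F_A(p)=\langle A,v\otimes\bar v\rangle_F$. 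For the trace form, simply expand $\trace(\bar v\otimes v\cdot A)=\sum_{\lambda,\mu}(\bar v\otimes v)_{\lambda,\mu}A_{\mu,\lambda}=\sum_{\lambda,\mu}\overline{v_\lambda}v_\mu A_{\mu,\lambda}$, which by the same Hermitian-symmetry argument equals $\sum_{\lambda,\mu}A_{\lambda,\mu}v_\lambda\overline{v_\mu}=F_A(p)$.

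The only genuinely non-routine ingredient is the explicit formula for $v_\lambda$, i.e. that $\G[\pi(\lambda)\varphi](p)=\exp\{-\tfrac\pi2(p-\lambda)^2-\pi(x+a)(y-b)\}$ with $p=(x,y)$ and $\lambda=(a,b)$; but this is exactly formula \eqref{eq:Gphitrans} recorded earlier, so nothing new is needed. I do not anticipate a serious obstacle here — the proof is essentially bookkeeping, with the one point requiring minor care being the reconciliation of the conjugation conventions in $\langle\cdot,\cdot\rangle_F$ versus the outer product, which the Hermitian-symmetry/reality observation handles cleanly.
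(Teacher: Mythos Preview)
Your main route is correct and a touch more direct than the paper's. The paper argues by linearity that it suffices to treat rank-one $A=a\otimes\bar a$ and then invokes Lemma~\ref{lem:relansatzspectro}\,ii) to write $F_A(p)=\spect\big[\sum_\lambda a_\lambda\pi(\lambda)\varphi\big](p)=|a\cdot v|^2=\trace\big((\bar v\otimes v)(a\otimes\bar a)\big)$. You instead expand $F_A(p)=\sum_{\lambda,\mu}A_{\lambda,\mu}\Phi_{\lambda,\mu}(p)$ for arbitrary $A$ and use part~i) of the same lemma to get $\Phi_{\lambda,\mu}(p)=v_\lambda\overline{v_\mu}$, which immediately yields $F_A(p)=\sum_{\lambda,\mu}A_{\lambda,\mu}v_\lambda\overline{v_\mu}$. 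Your identification of this with $\trace((\bar v\otimes v)A)$ is also fine (and in fact needs no Hermitian-symmetry step: one relabelling already gives it).

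There is, however, a genuine slip in your handling of the Frobenius form. Your claimed identity $\sum_{\lambda,\mu}A_{\lambda,\mu}\overline{v_\lambda}v_\mu=\overline{\sum_{\lambda,\mu}A_{\lambda,\mu}v_\lambda\overline{v_\mu}}$ does \emph{not} follow from relabelling plus $A_{\mu,\lambda}=\overline{A_{\lambda,\mu}}$: carrying those two moves out on the right-hand side produces $\sum_{\lambda,\mu}A_{\lambda,\mu}\overline{v_\mu}v_\lambda$, which differs from the left-hand side by terms involving $\Im(\overline{v_\lambda}v_\mu)$. Concretely, with the paper's conventions one has $\langle A,v\otimes\bar v\rangle_F=v^{H}Av$ while $\trace((\bar v\otimes v)A)=\bar v^{H}A\bar v$, and these two (real) quadratic forms are different for general Hermitian $A$; for instance $A=\begin{psmallmatrix}1&i\\-i&1\end{psmallmatrix}$ and $v=(1,i)^T$ give $0$ versus $4$. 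Your computation correctly establishes $F_A(p)=\bar v^{H}A\bar v$, matching the trace expression the paper proves; the second displayed equality in the lemma statement seems to carry a typo and should read $\langle A,\bar v\otimes v\rangle_F$.
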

\begin{proof}
    Since $A\mapsto F_A(p)$ is linear it suffices to consider the rank one case $A= a\otimes \bar{a}$.
    In this case we have that 
    \begin{multline}
        F_A(p) = \spect [\sum_{\lambda\in\Gamma} a_\lambda \pi(\lambda)\varphi](p)
        = 
        \big| \sum_{\lambda\in\Gamma} a_\lambda \G [\pi(\lambda)\varphi](p) \big|^2
        = \big| \sum_{\lambda\in\Gamma} a_\lambda v_\lambda \big|^2 \\
        = (a \cdot v)(\overline{a \cdot v})
        = \trace( a^H \bar{v} v^T a) = \trace([\bar{v}\otimes v ] [a\otimes \bar{a}]),
    \end{multline}
    which implies the claim.
\end{proof}

In the following, let $f\in L^2(\R)$ and let $s>0$.
Moreover, let $\Omega\subseteq s\Z^2$ and $\Gamma\subseteq \mathfrak{a}\Z^2$ be finite sets.
Given $p\in\R^2$, we use the notation $W_p=v\otimes \bar{v}$ with $v$ defined as in Lemma \ref{lem:matrixevaluationFA}.

\begin{definition}[Associated Convex Problem (ACP)]
Given a triple $(f,\Gamma,\Omega)$ as above, and a tolerance parameter $\varepsilon>0$ 
we define the {\em Admissible Set} as
$$
Adm_\varepsilon(f,\Gamma,\Omega) = \{A\in\mathfrak{A}_+(\Gamma): \, |\langle A,W_p\rangle_F - \spect f(p)| \le \varepsilon,\, p\in\Omega\}.
$$
The {\em Associated Convex Problem (ACP)} is then
\begin{equation}\label{eq:associatedsdp}
\min_{A\in Adm_\varepsilon(f,\Gamma,\Omega)}  \quad  \max_{\lambda\in\Gamma} A_{\lambda,\lambda}
\end{equation}
%
\end{definition}

Solving the ACP \eqref{eq:associatedsdp} amounts to finding among all admissible matrices the one such that the maximum of all diagonal entries is minimal. 
While the admissibility condition can be simply reformulated in terms of $F_A$, that is, 
$$
A \in Adm_\varepsilon(f,\Gamma,\Omega) \quad \Leftrightarrow \quad \|\spect f-F_A\|_{\ell^\infty(\Omega)} \le \varepsilon, 
$$
it is not yet clear why this particular objective function could be helpful. The next statement vindicates that this is indeed a good choice.
\begin{proposition}\label{prop:accuracytensorest}
    Let $f\in L^2(\R)$, let $\varepsilon\in (0,e^{-1})$ and let $s>0$ such that 
    \begin{equation}\label{eq:sofepsilon}
        s \le  \frac{\sqrt{38\pi}}{36} \left(\ln \frac1\varepsilon\right)^{-1/2}.
    \end{equation}
    Moreover, let $\Omega\subseteq s\Z^2$, let $\Gamma\subseteq \mathfrak{a}\Z^2$ and let 
    $p,u\in\R^2$ be such that 
    \begin{equation}\label{eq:puxi}
    p+\frac{1}{2} u\in \mathcal{A}(s,\Omega):=\{X\in\R^2\,:\    \dist\left(X, s\Z^2\setminus \Omega \right) \ge \frac{\sqrt{19}}9 s^{-1}\}.
    \end{equation}
    Suppose that $A\in Adm_\varepsilon(f,\Gamma,\Omega)$ and let 
    \begin{equation}
        c_0:= \|\spect f\|_{L^\infty(\R^2)} + 64.1\big(\max_{\lambda\in\Gamma} A_{\lambda,\lambda}\big).
    \end{equation}
    Then it holds that 
    \begin{equation}
        |\E[F_A](p,u) - \Tcal_u[\G f](p)| \le 8.6 (1+5.2 \sqrt{c_0})^2 \cdot \sqrt\varepsilon \cdot e^{\frac{17\pi}{32} u^2}.
    \end{equation}
\end{proposition}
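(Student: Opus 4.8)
The plan is to reduce everything to the continuity estimate of Proposition~\ref{prop:contestEvalop} applied to $G := F_A - \spect f$. By Lemma~\ref{lem:relFGprod} we have $\Tcal_u[\G f](p) = \E[\spect f](p,u)$, and since $G \mapsto \E[G](p,u)$ is linear in its holomorphic argument,
$$
\E[F_A](p,u) - \Tcal_u[\G f](p) = \E[G](p,u), \qquad G = F_A - \spect f \in \mathcal{O}^\infty(\C^2).
$$
It therefore remains to bound $|\E[G](p,u)|$ via Proposition~\ref{prop:contestEvalop}, for which I must place $G$ in a class $\mathcal{V}(C,K)$ with a matching distance hypothesis and then tidy up the constants.

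Write $m_0 := \max_{\lambda\in\Gamma} A_{\lambda,\lambda}$. Lemma~\ref{lem:AVCK} gives $F_A \in \mathcal{V}\big(\frac{8\pi}{17},\, 272\, m_0\big)$ with $M_{F_A}(r) \le 64.1\, m_0\, e^{2\pi r^2}$; Lemma~\ref{lem:fourierdecayDexp}, applied with the Gaussian cut-off $e^{-\frac\pi8|\cdot-\tau|^2} = \gamma(\cdot-\tau)\big|_{\R^2}$, gives $\spect f \in \mathcal{V}\big(\frac{38\pi}{81},\, 8\|\spect f\|_{L^\infty(\R^2)}\big)$; and Lemma~\ref{lem:entextgrowth} gives $M_{\spect f}(r) \le \|\spect f\|_{L^\infty(\R^2)}\, e^{2\pi r^2}$. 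Since $\frac{38\pi}{81} < \frac{8\pi}{17}$, both functions lie in $\mathcal{V}(C,\cdot)$ with $C := \frac{38\pi}{81}$; using $\widehat{G_\tau} = \widehat{(F_A)_\tau} - \widehat{(\spect f)_\tau}$ together with $M_G \le M_{F_A} + M_{\spect f}$, one obtains
$$
G \in \mathcal{V}(C,K), \qquad K := 272\, m_0 + 8\|\spect f\|_{L^\infty(\R^2)}, \qquad M_G(r) \le c_0\, e^{2\pi r^2}.
$$
Here one checks the arithmetic $\sqrt{C/(2\pi)} = \frac{\sqrt{19}}{9}$, so that the hypothesis $p + \frac12 u \in \mathcal{A}(s,\Omega)$ is exactly the distance condition required in Proposition~\ref{prop:contestEvalop}.

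Now I would invoke Proposition~\ref{prop:contestEvalop} with these data. Two inputs need translating: by Lemma~\ref{lem:matrixevaluationFA}, $F_A(p) = \langle A, W_p\rangle_F$, so $A \in Adm_\varepsilon(f,\Gamma,\Omega)$ says precisely that $\|F_A - \spect f\|_{\ell^\infty(\Omega)} \le \varepsilon$, hence $\|G\|_{\ell^\infty(\Omega)}^{1/2} \le \sqrt\varepsilon$; and the constraint $s \le \frac{\sqrt{38\pi}}{36}\big(\ln\frac1\varepsilon\big)^{-1/2}$ rearranges to $\frac{C}{32 s^2} \ge \frac12\ln\frac1\varepsilon$, so $e^{-C/(32 s^2)} \le \sqrt\varepsilon$. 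Feeding in $M_G(0) \le c_0$ and $M_G(|u|) \le c_0\, e^{2\pi u^2}$, Proposition~\ref{prop:contestEvalop} yields
$$
|\E[G](p,u)| \le 8.6\,\big(1 + \sqrt{c_0} + 1.8\sqrt{K/C}\big)\,\sqrt{c_0}\,\sqrt\varepsilon\; e^{\pi u^2} e^{-\frac{15\pi}{32}u^2}.
$$
To close, I would collapse constants: since $272/64.1 < 8$ one has $K \le 8 c_0$, hence $K/C \le 8 c_0 / \frac{38\pi}{81} < 5.5\, c_0$ and $1.8\sqrt{K/C} < 4.2\sqrt{c_0}$, so $1 + \sqrt{c_0} + 1.8\sqrt{K/C} \le 1 + 5.2\sqrt{c_0}$; moreover $\pi - \frac{15\pi}{32} = \frac{17\pi}{32}$, and $\sqrt{c_0} \le 1 + 5.2\sqrt{c_0}$ gives $(1 + 5.2\sqrt{c_0})\sqrt{c_0} \le (1 + 5.2\sqrt{c_0})^2$. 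Combining these observations produces $|\E[G](p,u)| \le 8.6(1 + 5.2\sqrt{c_0})^2 \sqrt\varepsilon\, e^{\frac{17\pi}{32}u^2}$, which is the claim.

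No genuine analytic difficulty remains at this stage: the substantive work — Hadamard's three-line theorem, the oversampling and sampling lemmas, the Gaussian Fourier decay of the cut-off spectrogram, and the growth bounds for $F_A$ and $\spect f$ — is already encapsulated in Proposition~\ref{prop:contestEvalop} and Lemmas~\ref{lem:AVCK}, \ref{lem:fourierdecayDexp} and \ref{lem:entextgrowth}. The only delicate points are bookkeeping: recognising that it is the spectrogram's exponent $C = \frac{38\pi}{81}$ (rather than the larger $\frac{8\pi}{17}$ coming from $F_A$) that governs both the admissible region $\mathcal{A}(s,\Omega)$ and the bound $e^{-C/(32 s^2)} \le \sqrt\varepsilon$, and carefully packing the assorted numerical factors into the compact form $(1 + 5.2\sqrt{c_0})^2$.
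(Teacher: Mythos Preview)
Your proof is correct and follows essentially the same approach as the paper: apply Proposition~\ref{prop:contestEvalop} to $G=F_A-\spect f$, using Lemmas~\ref{lem:AVCK}, \ref{lem:fourierdecayDexp}, \ref{lem:entextgrowth} to place $G$ in $\mathcal{V}(\tfrac{38\pi}{81},K)$ with $K\le 8c_0$, then use the admissibility condition and the hypothesis on $s$ to bound both error terms by $\sqrt\varepsilon$, and finally collapse constants. The only cosmetic issue is that your intermediate bound $K/C<5.5\,c_0$ would give $1.8\sqrt{5.5}\approx 4.22$, slightly above $4.2$; the conclusion is still valid since the sharper value $K/C\le 648c_0/(38\pi)\approx 5.43\,c_0$ does yield $1.8\sqrt{K/C}<4.2\sqrt{c_0}$.
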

\begin{center}
\begin{figure}[ht]
\begin{tikzpicture}
\draw [dotted] (0.85,0.85) rectangle (5.45,5.45);
\draw [fill=lightgray,dotted] (1.50,1.50) rectangle (4.8,4.8);
\foreach \x in {1,2,17,18} \foreach \y in {1,...,18} \draw[red,fill=red] (\x/3,\y/3) circle [radius=1pt];
\foreach \x in {3,...,16} \foreach \y in {1,2,17,18} \draw[red,fill=red] (\x/3,\y/3) circle [radius=1pt];
\foreach \x in {3,...,16} \foreach \y in {3,...,16} \draw[blue,fill=blue] (\x/3,\y/3) circle [radius=1pt];
\draw[<-] (5.45,5) -- (6.5,5);
\node at (6.9,5) {$\partial \mathcal{R}$};
\draw[<-] (4.8,4) -- (6.5,4);
\node at (7.15,4) {$\mathcal{A}(s,\Omega)$};
\end{tikzpicture}

\caption{Here $\Omega=s\Z^2\cap\mathcal{R}$ where $\mathcal{R}$ is a square.
The condition $\varepsilon<e^{-1}$ ensures that $\mathcal{A}(s,\xi)$ contains no point outside $\mathcal{R}$.}
\end{figure}
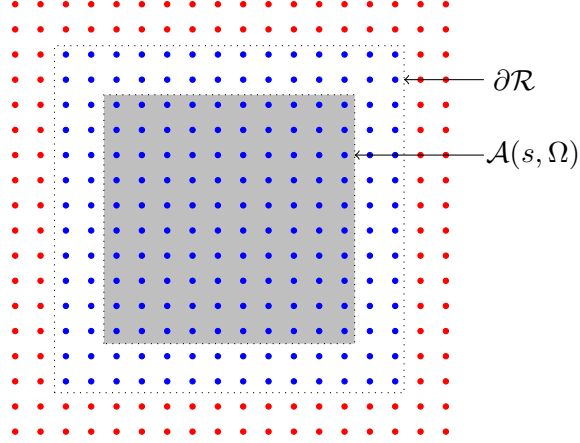
\end{center}

\begin{proof}
    We want to apply Proposition \ref{prop:contestEvalop} to $G=F_A-\spect f$.
    According to Lemma \ref{lem:fourierdecayDexp} and Lemma \ref{lem:entextgrowth}, respectively we have that 
    $$
    \spect f \in \mathcal{V}\left(\frac{38\pi}{81}, 8\|\spect f\|_{L^\infty(\R^2)} \right)
    \quad\text{and}\quad
    M_{\spect f}(r) \le \|\spect f\|_{L^\infty(\R^2)} e^{2\pi r^2}.
    $$
    As per Lemma \ref{lem:AVCK} we have that 
    \begin{equation}
        F_A \in \mathcal{V}\left(\frac{8\pi}{17}, 272 \max_{\lambda\in\Gamma} A_{\lambda,\lambda}
        \right) \quad\text{and} \quad 
        M_{F_A}(r) \le 64.1  (\max_{\lambda\in\Gamma} A_{\lambda,\lambda}) e^{2\pi r^2} .
    \end{equation}
    Note that 
    $$
    F_1 \in \mathcal{V}(C_1,K_1), \, F_2\in \mathcal{V}(C_2,K_2) \quad \Rightarrow \quad F_1-F_2 \in \mathcal{V}\big(\min\{C_1,C_2\}, K_1+K_2 \big).
    $$
    Thus, we get that 
    \begin{equation}
        G \in \mathcal{V}\left( \frac{38\pi}{81}, 8\|\spect f\|_{L^\infty} + 272 \max_{\lambda\in\Gamma} A_{\lambda,\lambda} \right) \subseteq \mathcal{V}\left( \frac{38\pi}{81}, 8c_0\right),
    \end{equation}
    and furthermore, we have that 
    \begin{equation}
        M_{G}(r) \le M_{F_A}(r)+ M_{\spect f}(r) \le \left(64.1 \max_{\lambda\in\Gamma}A_{\lambda,\lambda} + \|\spect f\|_{L^\infty} \right) e^{2\pi r^2} \le c_0 e^{2\pi r^2}.
    \end{equation}
    Note that with $C=\frac{38\pi}{81}$ we have that $\sqrt{\frac{C}{2\pi}}=\frac{\sqrt{19}}9$.
    Proposition \ref{prop:contestEvalop} therefore implies that 
    \begin{multline*}
         |\E[G](p,u)| \\
        \le  8.6  \left( \|G\|_{\ell^\infty(\Omega)}^{1/2} + \left(\sqrt{M_G(0)} + 1.8 \sqrt{\frac{81\cdot 8c_0}{38\pi}} \right) e^{-\frac{38\pi}{81\cdot32 s^2}} \right)
    \cdot \sqrt{M_G(|u|)} e^{-\frac{15\pi}{32} u^2}\\
    \le 8.6 \left( \sqrt\varepsilon + \sqrt{c_0} \left(1+1.8 \sqrt{\frac{81\cdot 8}{38\pi}} \right)  e^{-\frac{38\pi}{81\cdot32 s^2}}\right) \cdot \sqrt{c_0} e^{\frac{17\pi}{32}u^2}\\
    \le  8.6 \left(\sqrt\varepsilon + 5.2 \sqrt{c_0} e^{-\frac{38\pi}{81\cdot32 s^2}} \right)\cdot \sqrt{c_0} e^{\frac{17\pi}{32}u^2}
    \end{multline*}
    Finally, Assumption \eqref{eq:sofepsilon} is equivalent to 
    $    e^{-\frac{38\pi}{81\cdot 32s^2}} \le \sqrt{\varepsilon}$. Hence, 
    $$
    |\E[G](p,u)| \le 8.6 (1+5.2 \sqrt{c_0})^2 \cdot \sqrt\varepsilon \cdot e^{\frac{17\pi}{32}u^2},
    $$
    as desired.
    \end{proof}

    Next, we provide sufficient conditions for the set of admissible matrices to be non-empty as well as an upper bound for the objective function of a minimizer.
    \begin{proposition}\label{prop:feasibilityaprioribd}
        Let $f\in L^2(\R)$, let $\varepsilon \in (0,1)$, $s>0$ and let $\Omega\subseteq s\Z^2$.
        Moreover let $\Gamma \subseteq \mathfrak{a}\Z^2$ such that 
        \begin{equation}\label{eq:reqGamma}
            \dist\left(\Omega, \mathfrak{a}\Z^2\setminus \Gamma\right) \ge \sqrt{\frac2\pi \ln \left(\frac{53 \|\spect f\|_{L^\infty}}\varepsilon \right)} + \frac1{2\sqrt2}.
        \end{equation}
        Then it holds that $Adm_\varepsilon(f,\Gamma,\Omega)\neq \emptyset$ and every solution $A$ of the ACP satisfies 
        $$
        \max_{\lambda\in\Gamma} A_{\lambda,\lambda} \le 1.6 \|\spect f\|_{L^\infty(\R^2)}.
        $$
    \end{proposition}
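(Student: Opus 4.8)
The plan is to exhibit an explicit admissible matrix, namely the rank-one matrix arising from truncating the Gabor expansion of $f$ to the set $\Gamma$, and to check that it both satisfies the admissibility constraint (with the given choice of $\Gamma$) and has the claimed bound on its diagonal. The natural candidate is built from the canonical dual window. Recall from \eqref{eq:dualwindowrec} that $f = \sum_{\lambda\in\mathfrak{a}\Z^2} \G f(\lambda)\,\pi(\lambda)\psi$; however, for our Ansatz class we need to work with $\varphi$, not $\psi$. The cleaner route is to use the reproducing formula in the other direction: set $a_\lambda := \langle f, \pi(\lambda)\psi\rangle$ for $\lambda\in\Gamma$ and consider the truncated function $g := \sum_{\lambda\in\Gamma} a_\lambda\,\pi(\lambda)\varphi$, so that by Lemma \ref{lem:relansatzspectro}(ii) the matrix $A := a\otimes\bar a \in\mathfrak{A}_+(\Gamma)$ satisfies $F_A = \spect g$ on $\R^2$. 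One then wants $\|\spect f - \spect g\|_{\ell^\infty(\Omega)}\le\varepsilon$.

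The first main step is to control $\|\spect f - \spect g\|_{L^\infty}$ in terms of $\|f-g\|_{L^2}$ and $\|f\|_{L^2}$. Since $\spect h = |\G h|^2$ and $\G$ is (up to the normalizing constant) an isometry, one has the pointwise bound $|\spect f(z) - \spect g(z)| = \big||\G f(z)|^2 - |\G g(z)|^2\big| \le |\G f(z) - \G g(z)|\cdot(|\G f(z)| + |\G g(z)|) \le \|f-g\|_{L^2}\,(\|f\|_{L^2}+\|g\|_{L^2})$, using $|\G h(z)|\le\|h\|_{L^2}$ from Cauchy--Schwarz. After normalizing (recall $\|\spect f\|_{L^\infty}\le 1$, so $\|f\|_{L^2}\le 1$) this reduces matters to showing that the tail $\|f-g\|_{L^2} = \|\sum_{\lambda\in\mathfrak{a}\Z^2\setminus\Gamma} a_\lambda\pi(\lambda)\varphi\|_{L^2}$ is small. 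By the Bessel bound of Lemma \ref{lem:rieszbdpsi} this is $\lesssim \big(\sum_{\lambda\notin\Gamma}|a_\lambda|^2\big)^{1/2}$, and the coefficients $a_\lambda = \G f(\lambda)$ (up to a harmless phase) decay because $|\G f(\lambda)|$ is, by Lemma \ref{lem:gabordisccont}, dominated by the local $L^2$-mass of $\G f$, which is Gaussian-concentrated away from the essential support — but more usefully, we can simply use $|\G f(\lambda)|\le\|f\|_{L^2}\cdot|\G\varphi(\lambda-\text{(center)})|$-type Gaussian decay together with the fact that $\lambda$ ranges over points at distance $\ge\dist(\Omega,\mathfrak{a}\Z^2\setminus\Gamma) - (\text{lattice spacing})$ from $\Omega$. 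Summing a Gaussian over the complement lattice and matching the exponent with the right-hand side of \eqref{eq:reqGamma} yields the bound $\|f-g\|_{L^2}\le\varepsilon/(2\|f\|_{L^2})$ or so, which gives admissibility. The constant $53$ and the additive $\frac1{2\sqrt2}$ in \eqref{eq:reqGamma} will come out of this Gaussian tail estimate (the $\frac1{2\sqrt2}=\mathfrak{a}/2$ accounts for replacing ``distance to a lattice point'' by ``distance to the continuum'' via a ball of radius $\mathfrak{a}/2$, in the spirit of the proof of Theorem \ref{thm:main2}).

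The second step is the diagonal bound. For $A = a\otimes\bar a$ we have $A_{\lambda,\lambda} = |a_\lambda|^2 = |\G f(\lambda)|^2 = \spect f(\lambda)\le\|\spect f\|_{L^\infty}$, so the diagonal is trivially bounded by $\|\spect f\|_{L^\infty}$, which is even stronger than the claimed $1.6\|\spect f\|_{L^\infty}$. Hence any \emph{minimizer} $A_*$ of the ACP, which minimizes $\max_\lambda A_{\lambda,\lambda}$ over the (now nonempty) admissible set, automatically satisfies $\max_\lambda (A_*)_{\lambda,\lambda}\le \max_\lambda A_{\lambda,\lambda}\le\|\spect f\|_{L^\infty}\le 1.6\|\spect f\|_{L^\infty}$.

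I expect the main obstacle to be bookkeeping the constants in the Gaussian tail estimate so that the threshold in \eqref{eq:reqGamma} comes out exactly, and in particular being careful about two subtleties: (i) the Bessel constant $3.1$ from Lemma \ref{lem:rieszbdpsi} and the factor $(\|f\|_{L^2}+\|g\|_{L^2})\le 2\|f\|_{L^2}$ enter multiplicatively, forcing the tail sum to be controlled below $\varepsilon/(6.2\cdot 2)$ or thereabouts, which is where a factor like $53 = (\text{something})\cdot 6.2\cdot 2\cdot(\text{Gaussian sum constant})$ originates; and (ii) one must pass from ``$\lambda$ at lattice-distance $\ge d$ from $\Omega$'' to a clean Gaussian sum $\sum_k e^{-\pi(\text{spacing})^2(|k|-\text{offset})^2}$ and bound it via Lemma \ref{lem:sumgaussshifts} or a direct geometric-series comparison. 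None of this is conceptually hard, but getting $53$ and $\frac1{2\sqrt2}$ precisely right is the fiddly part.
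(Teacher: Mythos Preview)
Your overall setup is right: define $a_\lambda = \langle f,\pi(\lambda)\psi\rangle$ for $\lambda\in\Gamma$, set $A = a\otimes\bar a$, $g = \sum_{\lambda\in\Gamma} a_\lambda\,\pi(\lambda)\varphi$, so that $F_A = \spect g$ on $\R^2$ by Lemma~\ref{lem:relansatzspectro}. But two genuine gaps remain.

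First, you repeatedly identify $a_\lambda$ with $\G f(\lambda)$. This is false: $a_\lambda = \langle f,\pi(\lambda)\psi\rangle$ uses the \emph{dual} window $\psi$, not $\varphi$. In particular the diagonal bound you claim, $A_{\lambda,\lambda} = |\G f(\lambda)|^2 \le \|\spect f\|_{L^\infty}$, is not available. The paper instead estimates
\[
|a_\lambda| = |\langle \G f,\G[\pi(\lambda)\psi]\rangle_{L^2(\R^2)}| \le \|\G f\|_{L^\infty}\,\|\G\psi\|_{L^1(\R^2)} \le 1.23\,\|\G f\|_{L^\infty}
\]
via Lemma~\ref{lem:bdspsi}, whence $A_{\lambda,\lambda}\le 1.23^2\,\|\spect f\|_{L^\infty}<1.6\,\|\spect f\|_{L^\infty}$. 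This is precisely where the constant $1.6$ originates.

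Second, and more seriously, routing the admissibility estimate through a global bound on $\|f-g\|_{L^2}$ cannot work for general $f\in L^2(\R)$. The tail $f-g=\sum_{\lambda\notin\Gamma}a_\lambda\,\pi(\lambda)\varphi$ need not be small in $L^2$: nothing in the hypotheses forces $f$ to be concentrated near $\Gamma$, and the coefficients $a_\lambda$ have no reason to decay as $\lambda$ moves away from $\Omega$ (your asserted ``$|\G f(\lambda)|\le\|f\|_{L^2}\cdot|\G\varphi(\lambda-\text{center})|$-type decay'' is simply false for generic $f$). What \emph{is} true, and what the paper exploits, is that for each fixed $\xi\in\Omega$ the kernel $|\G[\pi(\lambda)\varphi](\xi)|=e^{-\frac\pi2|\xi-\lambda|^2}$ decays Gaussianly in $|\xi-\lambda|$. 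Combining this with the uniform coefficient bound above yields the \emph{pointwise} estimate
\[
|\G(f-g)(\xi)|\le 1.23\,\|\G f\|_{L^\infty}\sum_{\lambda\in\mathfrak{a}\Z^2\setminus\Gamma} e^{-\frac\pi2|\xi-\lambda|^2},
\]
and since every such $\lambda$ lies outside $B_R(\xi)$ (with $R$ the right-hand side of \eqref{eq:reqGamma}), a subharmonicity/integral-comparison argument bounds the Gaussian sum by $\frac{16}\pi e^{-\frac\pi2(R-\mathfrak{a}/2)^2}$. Then $|\spect f(\xi)-\spect g(\xi)|\le|\G(f-g)(\xi)|\cdot(2|\G f(\xi)|+|\G(f-g)(\xi)|)\le 53\,\|\spect f\|_{L^\infty}\,e^{-\frac\pi2(R-\mathfrak{a}/2)^2}\le\varepsilon$. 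Both the constant $53$ and the offset $\frac1{2\sqrt2}=\mathfrak{a}/2$ fall out of exactly this computation.
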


\begin{proof}
    Let $\psi$ denote the canonical dual of the Gabor frame $(\pi(\lambda)\varphi)_{\lambda\in\mathfrak{a}\Z^2}$.
    Moreover, let us define $a\in \C^\Gamma$ by 
    $$
    a_\lambda = \langle f, \pi(\lambda)\psi\rangle, \quad \lambda\in\Gamma
    $$
    and set $A=a\otimes \bar{a}$. We prove the statement by showing that 
    $$ \text{i)}\, A\in Adm_\varepsilon(f,\Gamma,\Omega)\quad\text{and}\quad
    \text{ii)} \,\max_{\lambda\in\Gamma} A_{\lambda,\lambda}\le 1.6 \|\spect f\|_{L^\infty}.$$

     To prove i) let us consider $\xi\in\Omega$ arbitrary but fixed.
    We need to show that 
    $$
    |F_A(\xi) - \spect f(\xi)| \le \varepsilon.
    $$
    Let $R$ denote the quantity on the right hand side of \eqref{eq:reqGamma} and note that by that assumption 
    $$
    \mathfrak{a}\Z^2 \setminus \Gamma \subseteq \mathfrak{a}\Z^2 \setminus B_R(\xi).
    $$
    With $g:= \sum_{\lambda\in\Gamma} a_\lambda \pi(\lambda)\varphi$ we have by the reconstruction formula \eqref{eq:dualwindowrec} that 
    $$
    \G [f-g] = \sum_{\lambda\in \mathfrak{a}\Z^2\setminus \Gamma} \langle f,\pi(\lambda)\psi\rangle
    \cdot \G [\pi(\lambda)\varphi].
    $$
    We can estimate 
    \begin{align}
    |\G [f-g](\xi)| &\le \sup_{\lambda\in\mathfrak{a}\Z^2\setminus \Gamma}|\langle f,\pi(\lambda)\psi\rangle| \cdot 
    \sum_{\lambda\in\mathfrak{a}\Z^2\setminus \Gamma} |\G [\pi(\lambda)\varphi](\xi)| \nonumber\\
    & \le \sup_{\lambda\in\R^2}|\langle f,\pi(\lambda)\psi\rangle| 
    \cdot 
    \sum_{\lambda\in\mathfrak{a}\Z^2\setminus B_R(\xi)} e^{-\frac\pi2 |\lambda-\xi|^2}.\label{est:feasbility}
    \end{align}

    We use that the Gabor transform is unitary and that $|\G [\pi(\lambda)\psi](p)|= |\G \psi(p-\lambda)|$ in order to bound
    \begin{multline}\label{est:fpsiip}
        |\langle f,\pi(\lambda)\psi\rangle|         = 
        |\langle \G f, \G [\pi(\lambda)\psi]\rangle_{L^2(\R^2)} |\\
        \le 
        \int_{\R^2} |\G f(y)| |\G \psi(y-\lambda)|\,\mbox{d}y
        \le \|\G f\|_{L^\infty} \cdot \|\G \psi\|_{L^1}
        \le 1.23 \|\G f\|_{L^\infty}
    \end{multline}
    where the last inequality follows from Lemma \ref{lem:bdspsi}.

    We proceed with estimating the sum of gaussians.
    A simple computation shows that 
    $$
    x\mapsto e^{-\frac\pi2 |x-\xi|^2}, \quad x\in \R^2\setminus B_{\sqrt{\frac2\pi}} (\xi)
    $$
    is subharmonic. For every $\lambda\in\mathfrak{a}\Z^2\setminus B_R(\xi)$ we have that 
    $$
    B_{\frac{\mathfrak{a}}2}(\lambda) \cap B_{\sqrt{\frac2\pi}}(\xi) = \emptyset,
    $$
    since $|\lambda-\xi|\ge R \ge 1.2 > \frac{\mathfrak{a}}2 + \sqrt{\frac2\pi}$.
    By subharmonicity we have for all such $\lambda$ that 
    $$
    e^{-\frac\pi2|\lambda-\xi|^2} \le \frac{1}{|B_{\mathfrak{a}/2}(\lambda)|} \int_{B_{\mathfrak{a}/2}(\lambda)} e^{-\frac\pi2 |x-\xi|^2}\,\mbox{d}x
    =
   \frac8\pi \int_{B_{\mathfrak{a}/2}(\lambda)} e^{-\frac\pi2 |x-\xi|^2}\,\mbox{d}x.
    $$
    Since we have that 
    $$
    \bigcup_{\lambda\in \mathfrak{a}\Z^2 \setminus B_R(\xi)} B_{\mathfrak{a}/2}(\lambda) \subseteq 
    \R^2 \setminus B_{R-\frac{\mathfrak{a}}2}(\xi),
    $$
    and since all these disks are pairwise disjoint, we can estimate 
    \begin{align*}
        \sum_{\lambda\in\mathfrak{a}\Z^2\setminus B_R(\xi)} e^{-\frac\pi2 |\lambda-\xi|^2} 
        & \le \frac8\pi \int_{\R^2 \setminus B_{R-\frac{\mathfrak{a}}2}(\xi)} e^{-\frac\pi2 |x-\xi|^2}\,\mbox{d}x \\
        & = \frac8\pi \int_{\R^2 \setminus B_{R-\frac{\mathfrak{a}}2}(0)} e^{-\frac\pi2 |x|^2}\,\mbox{d}x
        = \frac{16}\pi e^{-\frac\pi2 (R-\mathfrak{a}/2)^2}.
    \end{align*}
    Thus, by injecting this and \eqref{est:fpsiip} into \eqref{est:feasbility}, we obtain 
    $$
    |\G [f-g](\xi)| \le 6.3 \cdot \frac{16}{\pi} \|\G f\|_{L^\infty} e^{-\frac\pi2 (R-\mathfrak{a}/2)^2}.
    $$
    With this we can bound
    \begin{align*}
    |\spect f(\xi) - F_A(\xi)| &=  ||\G f(\xi)| - |\G g(\xi)|| \cdot  (|\G f(\xi)| + |\G g(\xi)|)\\
    &\le |\G [f-g](\xi)| \cdot (2 |\G f(\xi)| + |\G [f-g](\xi)|)\\
    &\le 6.3 \left( 2 + 6.3\right)  \|\spect f\|_{L^\infty(\R^2)} e^{-\frac\pi2 (R-\mathfrak{a}/2)^2} \\
    &\le 53 \|\spect f\|_{L^\infty(\R^2)} e^{-\frac\pi2 (R-\mathfrak{a}/2)^2}
    \end{align*}
    which is bounded by $\varepsilon$ according to the choice of $R$.\\

    Part ii) now follows directly from estimate \eqref{est:fpsiip}: For arbitrary $\lambda\in\Gamma$ we have that
    $$
    A_{\lambda,\lambda} = |a_\lambda|^2 = |\langle f, \pi(\lambda)\psi\rangle|^2 \le 1.23^2 \|\spect f\|_{L^\infty},
    $$
    which --  as $1.23^2 < 1.6$ -- implies the claimed inequality.
\end{proof}

\section{Proof of Theorem \ref{thm:main}}
\label{sec:proofthm1}

\begin{proof}[Proof of Theorem \ref{thm:main}]
    We break up the proof into various steps and show that
    \begin{enumerate}[a)]
    \item Step 1 is feasible, and any solution $A$ satisfies $A\in Adm_{\frac{3\varepsilon}2}(f,\Gamma,\Omega)$ and 
     $$\max_{\lambda\in\Gamma} A_{\lambda,\lambda}\le 1.6.$$
    \item For all $(\lambda,\lambda') \in \mathcal{P}$ it holds that 
    \begin{equation}\label{eq:estTerror}
    |T_{\lambda',\lambda} - \Tcal_{\lambda'-\lambda} [\G f](\lambda)| \le\varepsilon' = (3.1\cdot  10^4) \sqrt\varepsilon e^{\frac{17\pi}{32}r^2}.
    \end{equation}
    \item There exists a feasible $Y$ for the CP in step 2  (see \eqref{eq:convfeasprob}).
    \item Any such $Y$ has a simple largest eigenvalue, and the corresponding eigenvector $v$ obeys \eqref{eq:bounddelift}.
    \end{enumerate}

    \smallskip

To establish a) we first observe that if  $A\in \mathfrak{A}_+(\Gamma)$ is feasible for step 1, then by the triangle inequality 
$$
\|F_A - \spect f\|_{\ell^\infty(\Omega)}\le \|F_A-\sigma\|_{\ell^\infty(\Omega)} + \|\sigma-\spect f\|_{\ell^\infty(\Omega)} \le  \varepsilon + \frac\varepsilon2,
$$
i.e.
$A\in Adm_\varepsilon(f,\Gamma,\Omega)$.
It remains to show that there exists $A\in Adm_{\frac\varepsilon2}(f,\Gamma,\Omega)$ which further satisfies the inequality $\max_{\lambda\in\Gamma}A_{\lambda,\lambda}\le 1.6$, as by the triangle inequality such a matrix is then feasible for step 1.\\

    By Proposition \ref{prop:feasibilityaprioribd} (applied to $\frac\varepsilon2$) 
    we only need to  show that 
    \begin{equation}\label{eq:condstepa}
        \dist\left(\Omega, \mathfrak{a}\Z^2\setminus \Gamma \right) \ge \sqrt{\frac2\pi \ln\left(\frac{2\cdot 53}{\varepsilon}\right)} + \frac1{2\sqrt2}.
    \end{equation}
    Note that for every $\varepsilon\in (0,1)$ the right hand side can be bounded according to 
    \begin{multline}
        \sqrt{\frac2\pi \ln\left(\frac{106}{\varepsilon}\right)} + \frac1{2\sqrt2} = 
        \sqrt{\frac2\pi \ln(106) - \frac2\pi \ln\varepsilon} + \frac1{2\sqrt2}\\
        \le \sqrt{\frac2\pi \ln(106)}+ \left(-\sqrt\frac2\pi \ln\varepsilon \right)^{1/2} + \frac1{2\sqrt2} 
        <
        2.1 + 0.9 \sqrt{\ln \frac1\varepsilon}
    \end{multline}
   By construction of the sets and with condition \eqref{eq:condR} we have 
    $$
    \dist\left(\Omega, \mathfrak{a}\Z^2\setminus \Gamma \right)\ge R \ge  2.1 + 0.9 \sqrt{\ln \frac1\varepsilon},
    $$
    and we are done.

    \smallskip

    We proceed with the proof of statement b).
    Let $(\lambda,\lambda')\in\mathcal{P}$ be arbitrary but fixed, and denote $p=\lambda$ and $u=\lambda'-\lambda$. 
    According to Proposition \ref{prop:accuracytensorest} (with $1.5 \varepsilon$ instead of $\varepsilon$), it holds that 
    \begin{multline}\label{est:stepb}
        |T_{\lambda,\lambda'}- \Tcal_{\lambda'-\lambda}[\G f](\lambda)| =
        |\mathcal{E}[F_A](p,u)-\Tcal_u[\G f](p)|\\
        \le 
        8.6 (1+5.2\sqrt{c_0})^2\cdot\sqrt{1.5\varepsilon}\cdot e^{\frac{17\pi}{32}u^2}
    \end{multline}
    with (recalling that $A$ is a solution of the CP in step $1$)
    \begin{equation}\label{eq:boundc0}
    c_0 =\|\spect f\|_{L^\infty(\R^2)}+64.1(\max_{\lambda\in\Gamma} A_{\lambda,\lambda}) \le 1+64.1\cdot 1.6 < 104
    \end{equation}
    provided that the following three conditions hold:  \begin{enumerate}[i)]
    \item $1.5 \varepsilon < e^{-1}$,
    \item $s \le \frac{\sqrt{38\pi}}{36} \left(\ln\frac2{3\varepsilon} \right)^{-\frac12}$, and
    \item $p+\frac12 u \in \mathcal{A}(s,\Omega)$.
    \end{enumerate}
    As $\frac{\sqrt{38\pi}}{36}=0.30\ldots$, condition ii) follows directly from assumption \eqref{eq:conds}.
    Since $\|\spect f\|_{L^\infty(\R^2)} \le 1$, assumption \eqref{eq:condepsilon} implies that 
    $$
    \varepsilon \le \left[\frac{e^{-\frac{17\pi}{32}r^2}}{1.33\cdot 10^5} \frac{\|\G f\|_{\ell^2(\Lambda)}^2}{|\Lambda|^2} \right]^2 \le (1.33\cdot 10^5)^{-2}, 
    $$
    which implies i).\\
    To verify iii), recall that $|u|\le r$ and that $p\in\Lambda\subseteq[-T,T]\times[-S,S]$. As 
    $$
    s\Z^2\setminus \Omega \subseteq \R^2 \setminus ([-T-R,T+R]\times [-S-R,S+R]),
    $$
    we have with \eqref{eq:condR} that 
    $$
    \dist \left(p+\frac12 u, s\Z^2\setminus \Omega \right) \ge R - \frac{r}2 \ge \frac1{2s} > \frac{\sqrt{19}}{9}s^{-1},
    $$
    which shows that indeed $p+\frac12 u \in\mathcal{A}(s,\Omega)$.
    Finally, combining \eqref{est:stepb} and \eqref{eq:boundc0} implies that 
    $$
    |T_{\lambda,\lambda'}-\Tcal_{\lambda'-\lambda}[\G f](\lambda)|\le (3.1\cdot 10^4)\cdot  \sqrt{\varepsilon} e^{\frac{17\pi}{32}u^2}
    $$
    
    \smallskip

    It follows directly from the estimate \eqref{eq:estTerror} that 
    $$
    Y=\big(\G f(\lambda)\overline{\G f(\lambda')} \big)_{\lambda,\lambda'\in \Lambda}
    $$
    meets the constraints of step 2. Thus, statement c) holds.\\

    \smallskip 

    To prove part d) we apply Corollary \ref{cor:delifting}, which states that 
    \begin{equation}\label{eq:asstcor}
    \min_{\theta\in\R}\left|\sqrt{\trace(Y)}v - e^{i\theta} (\G f(\lambda))_{\lambda\in\Lambda} \right|
    \le 
    \left(1+2\sqrt6 \sqrt{\frac{B}{\lambda_2(f,\Lambda,r)}} \right) \sqrt{\varepsilon'},
    \end{equation}
    provided that 
    \begin{equation}\label{eq:condepsprime1}
        \varepsilon' \le \min\left\{ 1, \frac{|\Lambda|^2 \lambda_2(f,\Lambda,r)}{12 B} \right\} \times \frac{\|\G f\|_{\ell^2(\Lambda)}^2}{|\Lambda|^2}
    \end{equation}
    where $B=\sum_{(u,v)\in \Lambda} |\mathcal{L}_{u,v}|$ with $\mathcal{L}$ the Laplacian of the signal associated graph.
    By the way the graph is defined, we have that the maximal degree (i.e., the maximal number of neighbors a vertex can have) is bounded from above by
    $$
    \big| \overline{B_r(0)}\cap \mathfrak{a}\Z^2| = |\overline{B_{\sqrt2 r}(0)} \cap \Z^2 | \le 
    (2\sqrt2 r)^2 = 8r^2.
    $$
    Recall that the Laplacian is given by 
    $$
    \mathcal{L}_{\lambda,\mu} =
    \begin{cases}
      \sum_{\lambda'\sim \lambda} |\G f(\lambda')|^2,  \quad &\mu=\lambda\\
      - |\G f(\lambda)| |\G f(\mu)| , \quad & \mu\sim \lambda\\
      0 &\text{otw.}
    \end{cases}
    $$
    Since $|xy|\le \dfrac{1}{2} (|x|^2+|y|^2)$ we can estimate 
    \begin{align*}
    B &\le \sum_{\lambda\in\Lambda} \left(\sum_{\substack{\lambda'\in\Lambda\\\lambda'\sim \lambda}} |\G f(\lambda')|^2 + 
     \dfrac{1}{2}\sum_{\substack{\mu\in\Lambda\\ \mu \sim \lambda}} (|\G f(\lambda)|^2 + |\G f(\mu)|^2)
    \right)\\
    & = \sum_{\lambda\in\Lambda} \sum_{\substack{\lambda'\in\Lambda\\ \lambda'\sim\lambda}} |\G f(\lambda')|^2 + 
     \dfrac{1}{2} \left(\sum_{\lambda \in\Lambda} |\G f(\lambda)|^2 \right) \left( \sum_{\substack{\mu\in\Lambda\\\mu\sim\lambda}} 1\right)
     +  \dfrac{1}{2}\sum_{\lambda\in\Lambda} \sum_{\substack{\mu\in\Lambda\\\mu\sim\lambda}} |\G f(\mu)|^2\\
     &= 2 \left(\sum_{\lambda \in\Lambda} |\G f(\lambda)|^2 \right) \left( \sum_{\substack{\mu\in\Lambda\\\mu\sim\lambda}} 1\right) \\     
     &\le 16 r^2 \|\G f\|_{\ell^2(\Lambda)}^2
    \end{align*}
    which implies that 
    $$
    \frac{|\Lambda|^2 \lambda_2(f,\Lambda,r)}{12B} \ge \frac{|\Lambda|^2 \lambda_2(f,\Lambda,r)}{192 r^2 \|\G f\|_{\ell^2(\Lambda)}^2}.
    $$
    Together with \eqref{eq:condepsilonprime} and \eqref{eq:condepsilon} we get that 
    \begin{align*}
        \varepsilon' &= (3.1\times 10^4)e^{\frac{17\pi}{32}r^2} \sqrt\varepsilon\\
        &\le \min\left\{\frac{\|\G f\|_{\ell^2(\Lambda)}^2}{|\Lambda|^2}, \frac{ \lambda_2(f,\Lambda,r)}{192 r^2 }\right\}\\
        &= \min\left\{ 
        1 , \frac{|\Lambda|^2 \lambda_2(f,\Lambda,r)}{192 r^2 \|\G f\|_{\ell^2(\Lambda)^2}} 
        \right\}\times \frac{\|\G f\|_{\ell^2(\Lambda)}^2}{|\Lambda|^2},
    \end{align*}
    which implies 
     \eqref{eq:condepsprime1}

    Rewriting the right hand side of \eqref{eq:asstcor} in terms of $\varepsilon$ and making use of the estimate for $B$, further implies 
    \begin{align*}
        \min_{\theta\in\R}\Big|\sqrt{\trace(Y)}v &- e^{i\theta} (\G f(\lambda))_{\lambda\in\Lambda} \Big|\\
    &\le 
    \left(1+2\sqrt6 \sqrt{\frac{16 r^2 \|\G f\|_{\ell^2(\Lambda)}^2}{\lambda_2(f,\Lambda,r)}} \right) 
    \times \sqrt{3.1\cdot  10^4} \,  e^{\frac{17\pi}{64}r^2} \sqrt[4]{\varepsilon}\\
    &\le 177 \cdot e^{0.84 r^2} \cdot \left(1+20 r \sqrt{\frac{\|\G f\|_{\ell^2(\Lambda)}^2}{\lambda_2(f,\Lambda,r)}} \right)
    \cdot \sqrt[4]\varepsilon,
    \end{align*}
    which finishes the proof.
\end{proof}

\section*{Acknowledgements}
Martin Rathmair was supported by the Erwin–Schr{\"o}dinger Program (J-4523) of
the Austrian Science Fund (FWF).

Ce travail a bénéficié d'une aide de l'\'Etat attribu\'e \`a l'Universit\'e de Bordeaux en
tant qu'Initiative d'excellence, au titre du plan France 2030.

\bibliography{references} 
\bibliographystyle{plain}

\end{document}